\newtheorem{theorem}{Theorem}
\newtheorem{corollary}{Corollary}
\newtheorem{proposition}{Proposition}
\newtheorem{lemma}{Lemma}
\newtheorem{remark}{Remark}
\newtheorem{conjecture}{Conjecture}
\newtheorem{example}{Example}
\newtheorem{definition}{Definition}
\def \RO{\textup{RO}}
\def\sfp{\textup{sfp}}
\def\sqrt{\textup{sqrt}}
\def \T{\textup{T}}
\def \rank{\textup{rank}}
\def \rank{\textup{rank}}
\def \diag{\textup{diag~}}
\def \lcm{\textup{lcm}}
\def \Res {\textup{\Res}}
\def \ord{\textup{ord}}
\newcommand{\rmnum}[1]{\romannumeral #1}
\newcommand\restr[2]{{
		\left.\kern-\nulldelimiterspace 
		#1 
		\right|_{#2} 
}}
\newcommand{\Rmnum}[1]{\expandafter\@slowromancap\romannumeral #1@}
\title{Haemers' conjecture: an algorithmic perspective}
\author{\small Wei Wang$^{{\rm a}}$\quad\quad Wei Wang$^{\rm b}$\thanks{Corresponding author: wang\_weiw@163.com}
\\
{\footnotesize$^{\rm a}$School of Mathematics, Physics and Finance, Anhui Polytechnic University, Wuhu 241000, P. R. China}\\
{\footnotesize$^{\rm b}$School of Mathematics and Statistics, Xi'an Jiaotong University, Xi'an 710049, P. R. China}
}
\date{}
\begin{document}
 \maketitle

\begin{abstract}
Characterizing graphs by their spectra is a fundamental and challenging problem in spectral graph theory, which has received considerable attention in recent years. A major unsolved conjecture in this area is Haemers' conjecture which states that almost all graphs are determined by their spectra. Despite many efforts, little is known about this conjecture so far. In this paper, we shall consider Haemers' conjecture from an algorithmic perspective. Based on some recent developments in the generalized spectral characterizations of graphs, we propose an algorithm to find all possible generalized cospectral mates for a given $n$-vertex graph $G$, assuming that $G$ is controllable or almost controllable. The experimental results indicate that the proposed algorithm runs surprisingly fast for most graphs with several dozen vertices. Moreover, we observe in the experiment that most graphs are determined by their generalized spectra, e.g., at least 9945 graphs are determined by their generalized spectra among all randomly generated 10,000 graphs on 50 vertices in one experiment. These experimental results give strong evidence for Haemers' conjecture. \\

\noindent\textbf{Keywords}: generalized spectrum; generalized spectral characterization; generalized cospectral mates; Smith normal form; controllable graph

\noindent
\textbf{AMS Classification}: 05C50
\end{abstract}
\section{Introduction}
\label{intro}
A fundamental and challenging problem in spectral graph theory is to determine whether a given graph is determined by its spectrum. This problem has received considerable attention; see the survey papers \cite{ervdamLAA2003,ervdamDM2009}.

A major unsolved conjecture in this area is the following conjecture of Haemers.
\begin{conjecture}[\cite{ervdamLAA2003,haemers2016}]
	Almost all graphs are determined by their spectra.
\end{conjecture}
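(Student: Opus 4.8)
The plan is to establish the conjecture in two stages, splitting the class of cospectral mates of a graph $G$ according to whether or not the complementary spectra also agree. Write $A=A(G)$ for the adjacency matrix, $e$ for the all-ones vector, and $W=[\,e,Ae,A^2e,\ldots,A^{n-1}e\,]$ for the walk matrix, and call $G$ controllable when $\det W\neq 0$. The first stage restricts attention to controllable graphs, which are believed to comprise almost all graphs and for which $W$ supplies a rigid arithmetic skeleton. By the Johnson--Newman correspondence, a graph $H$ is generalized cospectral with such a $G$ precisely when $H=Q^{\mathsf{T}}AQ$ for a regular rational orthogonal $Q$ (one fixing $e$), and the denominators occurring in $Q$ are controlled by the Smith normal form of $W$. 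I would show that for almost all controllable $G$ the only such $Q$ are permutation matrices, so that $G$ has no nontrivial generalized cospectral mate and is determined by its generalized spectrum (DGS).

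For this first stage I would lean on the recent arithmetic criteria for generalized spectral determination: when $2^{-\lfloor n/2\rfloor}\det W$ is an odd squarefree integer, every admissible $Q$ has level $1$ and hence is a permutation matrix, forcing $H\cong G$. Since $2^{\lfloor n/2\rfloor}$ always divides $\det W$, the task becomes proving that the controllable graphs violating this condition form a set of vanishing density. The squarefree criterion alone is too coarse --- it fails on a positive-density set --- so I would supplement it with the finer level computation underlying the paper's algorithm, bounding the density of controllable graphs whose walk matrix admits a regular orthogonal transform of level exceeding $1$. Concretely this means analysing the distribution of $\det W$ and of the cokernel of $W$ modulo small primes over the uniform random symmetric $0/1$ model and showing the relevant exceptional probabilities are summable and tend to $0$.

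The second and decisive stage passes from DGS to DS, which is exactly the content separating this proof from the strictly weaker generalized-spectrum statement: since DS $\Rightarrow$ DGS but not conversely, a DGS graph may still possess a cospectral mate $H\not\cong G$ whose complement is \emph{not} cospectral with $\overline{G}$. Such $H$ again arise as $Q^{\mathsf{T}}AQ$ for an orthogonal $Q$, but now with $Qe\neq e$, and here the entire apparatus of the first stage collapses: $Q$ need no longer be rational, the Smith normal form of $W$ imposes no constraint, and there is no finite arithmetic invariant bounding the candidates. To handle these non-regular mates I would attempt a direct enumeration, bounding the number of graphs admitting any cospectral mate by counting ordered pairs of non-isomorphic cospectral symmetric $0/1$ matrices and proving this count is $o\!\left(2^{\binom{n}{2}}\right)$.

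I expect this last enumeration to be the genuine obstacle, and the reason the conjecture remains open. Decoupled from the walk-matrix structure, the only known handle on cospectral-but-not-generalized-cospectral mates is the crude bound coming from the number of distinct characteristic polynomials, which is far too weak to force a vanishing density. The realistic contribution of the present algorithmic approach is therefore twofold: it settles the generalized-spectrum analogue over the controllable and almost controllable range by making the level analysis of the first stage effective, and it supplies systematic numerical evidence --- such as the experiment producing at least $9945$ DGS graphs out of $10{,}000$ random graphs on $50$ vertices --- that the non-regular mates of the third paragraph are in practice even rarer than the regular ones, so that the gap between DGS and DS, while analytically formidable, appears empirically negligible.
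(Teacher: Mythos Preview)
The statement you are attempting to prove is \emph{Haemers' conjecture}, and the paper does not prove it; it is stated precisely as an open conjecture, and the entire paper is devoted to giving algorithmic and experimental evidence for the weaker generalized-spectrum version, not a proof of either version. There is therefore no ``paper's own proof'' to compare your proposal against.

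Your proposal is not a proof either, and you essentially concede this in your third and fourth paragraphs. The first stage you describe (showing almost all controllable graphs are DGS via the level/Smith-normal-form machinery) is itself open: the paper can only verify this numerically graph by graph, and the assertion that the exceptional probabilities ``are summable and tend to $0$'' is exactly what no one knows how to prove. The second stage --- passing from DGS to DS by controlling cospectral mates with $Qe\neq e$ --- is, as you correctly note, the point at which the rational/arithmetic framework collapses entirely, and your proposed ``direct enumeration'' bound $o\!\left(2^{\binom{n}{2}}\right)$ on cospectral pairs is not a method but a restatement of the conjecture. So the proposal is an accurate survey of where the difficulties lie, but it contains no new idea that would close either gap; the conjecture remains open, and the paper makes no claim otherwise.
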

For simplicity, for a graph $G$,  we say that $G$ is a DS graph if $G$ is determined by its spectrum. To have an intuitive feeling on Haemers' conjecture, one may want to know the exact percentages of DS graphs for a fixed order $n$.  This has be accomplished only for $n\le 12$ by enumeration method \cite{godsil1976, haemers2004,brouwer2009}, see Table \ref{computer}.
\begin{table}[htbp]
	\footnotesize
	\centering
	\caption{\label{computer} Fractions of DS and DGS graphs}
	\begin{threeparttable}
		\begin{tabular}{crrcrcc}
			\toprule
			$n$ &\# graphs & \# DS graphs &  DS graphs(\%)& \# DGS graphs &  DGS graphs(\%)\\
			\midrule
			1  &  1 &1&100&1&100\\
			2  &  2 &2&100&2&100\\
			3  & 4 &4&100&4&100\\
			4  & 11 &11&100&11&100\\
			5  &  34 &32&94.1&34&100\\
			6  &  156 &146&93.6&156&100\\
			7  &  1044 &934&89.5&1004&96.2\\
			8  &  12346 &10624&86.1&11180&90.6\\
			9  &  274668 &223629&81.4&230857&84.0\\
			10  &  12005168 &9444562&78.7&9587016&79.9\\
			11  &  1018997864 &803666188&78.9&806733492&79.2\\
			12  &  165091172592 &134023600111&81.2&134135405065&81.2\\
			13  &  50502031367952 &?&?&?&?\\
			\bottomrule
		\end{tabular}
	\end{threeparttable}
\end{table}

In this paper, we shall consider a weaker version of Haemers' conjecture which states that almost all graphs are determined by their generalized spectra \cite{abiad2012}. Here, the generalized spectrum of a graph $G$ refers to the spectrum of $G$ together with that of $\overline{G}$, the complement of $G$. We shall say that $G$ is DGS (or $G$ is a DGS graph) if $G$ is determined by its generalized spectrum.
 Recently, some (sufficient) conditions for a graph to be DGS were obtained \cite{wang2013ElJC,wang2017JCTB,wang2021EUJC,qiuDM2022}. Using these conditions and the Monte Carlo method for graphs with several dozen vertices, the author \cite{wang2017JCTB} observed that nearly $20\%$ of graphs are DGS.  Unfortunately, about $80\%$ graphs do not satisfy these conditions and so we do not know whether they are DGS or not.

 The current paper is a continuation of this line of research. However, instead of seeking sufficient conditions for a graph to be DGS, we try to obtain a necessary and sufficient condition, which should be valid for almost all graphs; this was done algorithmically.  For a graph $G$, we try to \emph{generate} all possible generalized cospectral mates of $G$.  Here a \emph{generalized cospectral mate} of a graph
 $G$ means a graph that is generalized cospectral with  $G$ but is not isomorphic to $G$. Note that a DGS graph is just a graph with no generalized cospectral mates. Clearly, if we can generate all generalized cospectral mates for a given graph in an efficient way, then we can quickly determine whether the given graph is DGS or not. Although the proposed algorithm does not work for all graphs, it does work on most graphs, as we shall see later. We recall some definitions for an explanation.

 For a graph $G$ with $n$ vertices, the \emph{walk matrix} of $G$ is
 $$W(G)=[e,Ae,\ldots,A^{n-1}e],$$
 where $e$ is the $n$-dimensional all-ones vector and $A$ is the adjacency matrix of $G$. A graph $G$ is \emph{controllable} \cite{godsil2012} (resp. \emph{almost controllable} \cite{wang2021EUJC}) if $\rank\, W(G)$, the rank of $W(G)$, equals $n$ (resp. $n-1$).  O'Rourke and Touri \cite{rourke2016} proved that almost all graphs are controllable. In our algorithm, we shall include the class of almost controllable graphs. Indeed, the analysis for controllable graphs can be easily extended to almost controllable graphs.


 The controllability or almost controllability assumption is crucial for our discussion.  An orthogonal matrix $Q$ is \emph{regular} if $Qe=e$; and is \emph{rational} if each entry of $Q$ is rational. The starting point of our algorithm is the fundamental connection between generalized cospectrality and rational regular orthogonal matrices. This connection dates back to an old result of Johnson and Newman \cite{johnson1980JCTB}, whereas the rationality of the associated matrix $Q$ and the consequential applications in generalized spectral characterizations of graphs are recognized in \cite{wang2006EuJC} for controllable graphs, and in \cite{wang2021EUJC} for almost controllable graphs.

 \begin{theorem}[\cite{johnson1980JCTB,wang2006EuJC,wang2021EUJC}]\label{connection}
 	Let $G$ and $H$ be two graphs with $n$ vertices. Then $G$ and $H$ are generalized cospectral if and only if there exists a regular orthogonal matrix $Q$ such that
 	\begin{equation}\label{qaq}
 Q^\T A(G)Q=A(H).
 	\end{equation} Moreover,
 	
 	\textup{(\rmnum{1})} if $G$ is controllable then $Q^\T=W(H)(W(G))^{-1}$ and hence $Q$ is unique and rational.
 	
 	\textup{(\rmnum{2})} if $G$ is almost controllable then  Eq.~\eqref{qaq} has exactly two solutions for $Q$, both of which are rational.
 \end{theorem}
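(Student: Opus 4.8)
The plan is to treat the Johnson--Newman-type biconditional as the core and then read off (i) and (ii) as arithmetic refinements. For the easy implication, suppose $Q$ is regular orthogonal with $Q^\T A(G)Q=A(H)$. Then $A(G)\sim A(H)$, so they are cospectral; moreover $Q^\T J Q=(Q^\T e)(e^\T Q)=ee^\T=J$ since $Qe=e$ forces $e^\T Q=e^\T$ and $Q^\T e=e$. Hence $Q^\T A(\overline G)Q=Q^\T(J-I-A(G))Q=J-I-A(H)=A(\overline H)$, so $A(\overline G)$ and $A(\overline H)$ are cospectral as well, and $G,H$ are generalized cospectral. The substantive direction is the converse. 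First I would re-express generalized cospectrality: via the classical identity relating $\phi(\overline G,x)$ to $\phi(G,x)$ and the walk generating function $e^\T(xI-A(G))^{-1}e$, generalized cospectrality of $G$ and $H$ is equivalent to $A(G)$ and $A(H)$ being cospectral together with $e^\T A(G)^k e=e^\T A(H)^k e$ for all $k\ge 0$; the latter says exactly that $W(G)^\T W(G)=W(H)^\T W(H)$, as both equal the Hankel matrix of walk moments.

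I would then build $Q$ blockwise. Let $\mathcal K_G=\operatorname{col}W(G)$ be the Krylov subspace generated by $e$ under $A(G)$, an $A(G)$-invariant rational subspace, and $\mathcal K_H$ the analogue for $H$. The assignment $A(G)^k e\mapsto A(H)^k e$ is a well-defined linear isometry of $\mathcal K_G$ onto $\mathcal K_H$ — well-defined and norm-preserving because the spanning tuples share the Gram matrix $W(G)^\T W(G)=W(H)^\T W(H)$ — and it intertwines $A(G)|_{\mathcal K_G}$ with $A(H)|_{\mathcal K_H}$, so these blocks carry the same characteristic polynomial. On the orthogonal complements, $A(G)|_{\mathcal K_G^\perp}$ and $A(H)|_{\mathcal K_H^\perp}$ are symmetric operators whose spectra are the ``leftover'' parts of $\operatorname{spec}A(G)$ and $\operatorname{spec}A(H)$; since $A(G)$ and $A(H)$ are cospectral and the Krylov blocks agree, these leftovers coincide, so an intertwining isometry of the complements exists. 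Gluing the two isometries yields an orthogonal $P$ with $PA(G)P^\T=A(H)$ and $Pe=e$; setting $Q=P^\T$ gives Eq.~\eqref{qaq} with $Q$ regular orthogonal.

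Part (i) is then immediate: if $G$ is controllable, $\mathcal K_G=\mathbb R^n$, so the forced relations $Q^\T A(G)^k e=A(H)^k e$ read $Q^\T W(G)=W(H)$; invertibility of $W(G)$ gives $Q^\T=W(H)W(G)^{-1}$, which is uniquely determined and rational since $W(G),W(H)\in\mathbb Z^{n\times n}$. For part (ii), $\dim\mathcal K_G=n-1$, so $\mathcal K_G^\perp$ is a line; as the orthogonal complement of an $A(G)$-invariant subspace it is $A(G)$-invariant, hence spanned by an eigenvector $u_G$, which may be taken primitive integral because $\mathcal K_G^\perp=\ker W(G)^\T$ is a rational subspace, and whose eigenvalue $\lambda$ is an integer since $\phi(G,x)=m_G(x)(x-\lambda)$ with $m_G\in\mathbb Z[x]$ the monic degree-$(n-1)$ polynomial annihilating $e$ (Gauss's lemma); the same $\lambda$ with a primitive integral eigenvector $u_H$ works for $H$, because $\phi(G)=\phi(H)$ and $m_G=m_H$. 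On $\mathcal K_G$ the matrix $Q^\T$ is forced exactly as above, while on the line $\mathcal K_G^\perp$ it must send $u_G$ to $\pm(\|u_G\|/\|u_H\|)\,u_H$, leaving precisely the two choices of sign, hence exactly two solutions $Q$. To see both are rational, set $\widetilde W(G)=[\,e,A(G)e,\dots,A(G)^{n-2}e,\,u_G\,]\in\mathbb Z^{n\times n}$, which is invertible, and $\widetilde W(H)$ analogously with last column $u_H$; then $Q^\T\widetilde W(G)$ is $\widetilde W(H)$ with its last column scaled by $\pm\|u_G\|/\|u_H\|$. The decisive point is that this scalar is rational: writing $|\det\widetilde W(G)|=\|u_G\|\cdot\mathcal V_G$, where $\mathcal V_G^2$ is the determinant of the $(n-1)\times(n-1)$ leading principal submatrix of $W(G)^\T W(G)$ — a generalized-spectral invariant, so $\mathcal V_G=\mathcal V_H$ — one gets $\|u_G\|/\|u_H\|=|\det\widetilde W(G)|/|\det\widetilde W(H)|\in\mathbb Q$, whence $Q^\T=(\text{rational matrix})\,\widetilde W(G)^{-1}$ is rational.

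I expect the main obstacle to be the converse implication: both the reduction of generalized cospectrality to cospectrality-plus-equal-walk-moments and, in part (ii), the rationality of the gluing scalar, which hinges on recognizing the truncated Hankel determinant $\mathcal V_G^2$ as an invariant of the generalized spectrum. By contrast the block-gluing construction and the deductions for controllable graphs are routine once these invariants are in hand.
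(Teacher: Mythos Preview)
The paper does not supply its own proof of this theorem; it is quoted from \cite{johnson1980JCTB,wang2006EuJC,wang2021EUJC}, so there is no in-paper argument to match against directly. Your proposal is correct and self-contained. The biconditional and part~(\rmnum{1}) follow the standard Johnson--Newman\,/\,Wang--Xu line, and your Krylov-block gluing is exactly the right mechanism. For part~(\rmnum{2}) your packaging differs mildly from what the paper invokes downstream: in Definition~\ref{defxi} and the proof of Lemma~\ref{lqa} the spanning vector of $\ker W(G)^\T$ is taken to be the cofactor vector $\xi(G)$ (normalized by $2^{\lfloor n/2\rfloor-1}$), and the two solutions are recorded as $Q^\T=W_0(H)(W_\delta(G))^{-1}$ for $\delta\in\{0,1\}$. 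Your primitive integral eigenvector $u_G$ spans the same line as $\xi(G)$, so the two constructions agree up to an integer scalar on the last column; your Gram/volume identity $|\det\widetilde W(G)|=\|u_G\|\cdot\mathcal V_G$ together with $\mathcal V_G=\mathcal V_H$ (from $W(G)^\T W(G)=W(H)^\T W(H)$) is a clean substitute for the explicit cofactor bookkeeping when proving that the scaling factor $\|u_G\|/\|u_H\|$ is rational. Either route yields the same pair of rational $Q$'s.
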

From Theorem \ref{connection}, in order to find all generalized cospectral mates of a graph $G$, it suffices to find all regular orthogonal matrices $Q$ such that $Q^\T A(G) Q$ is the adjacency matrix for some graph. Clearly, the assumption that $\rank~W(G)\ge n-1$ simplifies the problem significantly since we may require further the desired $Q$'s to be rational.

The heart of our algorithm is to give a \emph{practical} way to generate all $Q$'s such that  $Q^\T A(G) Q$ is the adjacency matrix for some graph. In order to give an overview of the algorithm, we need to  recall a simple but crucial concept.
\begin{definition}[\cite{wang2006EuJC}]\textup{
		Let $Q$ be a rational matrix. The \emph{level} of $Q$, denoted by $\ell(Q)$, is the smallest positive integer $k$ such that $kQ$ is an integral matrix.}
\end{definition}
For simplicity, assume for the moment that $G$ is controllable. Then it is easy to see from Theorem \ref{connection}(\rmnum{1}) that $\ell(Q)|\det W(G)$, which gives a rough estimate for the levels of all possible $Q$'s. Write $L=\det W(G)$ and let $x$ be any column of $LQ$ for all potential $Q$'s. Clearly $x$ is an integral vector. It is not difficult to see that $x$ must satisfy the following fundamental congruence equation:
\begin{equation}\label{solx}
(W(G))^\T x\equiv 0\pmod {L}
\end{equation}
The main part of our algorithm is to find all integral solutions to Eq.~\eqref{solx} which also satisfy some evident constraints such as $x^\T x=L^2$.  In order to find all potential $Q$'s, we shall use these solutions as vertices to  construct a new graph $\Omega(G)$ with the property that  every potential $Q$ naturally corresponds to a maximum clique of  $\Omega(G)$.

However, it should be pointed out that $\det W(G)$ is usually huge even for graphs with only several dozen vertices. So a naive approach does not work. In Section 2, we review some recent results to bound the maximal levels of possible $Q$'s using some invariants of $G$ under generalized cospectrality. The derivation is  based on what we refer to as \emph{modified walk matrices} which results in some refinement of Eq. \eqref{solx}.  In Section 3, we give the details to solve the associated equations and to construct the new graph $\Omega(G)$.  This leads to a practical way to generate all generalized cospectral mates for a graph. In particular, we give a necessary and sufficient condition (in an algorithmic way) for a graph $G$ to be DGS, under the  assumption that $G$ is controllable or almost controllable. As an important application, we use the proposed algorithm to test Haemers' conjecture  in Section \ref{sim}. The new experiment results give strong evidence for this conjecture, see Table \ref{res}.

\section{Preliminaries}
For an integer $n\times n$ matrix $M$, there always exist two unimodular matrices $U$ and $V$ such that $UMV$ is a diagonal matrix
$\diag[d_1,d_2,\ldots,d_r,0,\ldots,0]$, where $r$ is the rank of $M$ and the first $r$  diagonal entries are positive integers satisfying  $d_1\mid d_2\mid\cdots\mid d_r$. The diagonal entries $d_1,d_2,\ldots,d_r,0,\ldots,0$ are uniquely determined by the given matrix $M$. The unique  diagonal matrix
$\diag[d_1,d_2,\ldots,d_r,0,\ldots,0]$ is  known as the \emph{Smith normal form} of $M$,  and the diagonal entries  are \emph{invariant factors} of $M$.

We use $\RO_n$ to denote the group of regular orthogonal matrices of order $n$ (under the usual matrix multiplication).
For a graph $G$ with $n$ vertices, define
$$\mathcal{Q}(G)=\{Q\in \RO_n\colon\,Q^\T A(G)Q=A(H)\text{~for some graph~}H\}.$$
We note that $\mathcal{Q}(G)$ always contains all permutation matrices of order $n$. Moreover, if $Q\in \mathcal{Q}(G)$ then $QP\in \mathcal{Q}(G)$ for any permutation matrix $P$. Indeed, $(QP)^\T A(G) (QP)=P^\T A(H) P$, corresponding to a relabeling of $H$.
\subsection{Basic estimate of levels}

We know by Theorem \ref{connection} (\rmnum{1}) and (\rmnum{2}) that all matrices in $\mathcal{Q}(G)$ are rational provided that $G$ is either controllable or almost controllable. For the case of controllable graphs, the walk matrix $W(G)$ can be used to restrict all possible levels for $Q\in \mathcal{Q}(G)$.
For an integral $n\times n$ matrix $M$ and a positive integer $k\in \{1,2,\ldots,n\}$, we use $d_k(M)$ to denote the $k$-th invariant factor of $M$.
\begin{lemma}[\cite{wang2006EuJC}]\label{lqc}
If $G$ is controllable, then $\ell(Q)\mid d_n(W(G))$ for any $Q\in \mathcal{Q}(G)$.
\end{lemma}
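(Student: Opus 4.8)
The plan is to exploit the explicit formula $Q^\T = W(H)(W(G))^{-1}$ from Theorem~\ref{connection}(\rmnum{1}), together with the Smith normal form of $W(G)$. First I would fix $Q \in \mathcal{Q}(G)$, so that $Q^\T A(G) Q = A(H)$ for some graph $H$; since $G$ is controllable, $W(G)$ is invertible and $W(H) = Q^\T W(G)$ is integral. Rearranging gives $Q^\T = W(H)(W(G))^{-1}$, and the goal is to show that $d_n(W(G)) \cdot Q^\T$ is an integral matrix, i.e.\ that $\ell(Q) \mid d_n(W(G))$, since $\ell(Q) = \ell(Q^\T)$.

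The key step is to bring in the Smith normal form. Write $U W(G) V = D = \diag[d_1,\ldots,d_n]$ with $U, V$ unimodular and $d_i = d_i(W(G))$, so that $(W(G))^{-1} = V D^{-1} U$. Then
\begin{equation*}
d_n(W(G)) \cdot Q^\T = W(H) \cdot V \cdot \bigl(d_n(W(G)) D^{-1}\bigr) \cdot U.
\end{equation*}
Since $d_i \mid d_n$ for every $i$, the diagonal matrix $d_n(W(G)) D^{-1} = \diag[d_n/d_1,\ldots,d_n/d_{n-1},1]$ is integral; and $W(H)$, $V$, $U$ are all integral. Hence the right-hand side is a product of integral matrices, so it is integral, which is exactly what we wanted. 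Therefore $\ell(Q) \mid d_n(W(G))$.

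I do not anticipate a serious obstacle here — the argument is essentially a clean application of the divisibility chain $d_1 \mid d_2 \mid \cdots \mid d_n$ in the Smith normal form. The only points that need a word of care are: (a) verifying that $W(H)$ really is integral, which follows because $H$ is a graph (so $A(H)$ is a $0/1$ matrix and all powers $A(H)^k$ are integral), and (b) the elementary fact that $\ell(Q^\T) = \ell(Q)$, so that bounding the level of $Q^\T$ suffices. Both are routine, so the lemma follows directly from Theorem~\ref{connection}(\rmnum{1}) and the structure of the Smith normal form.
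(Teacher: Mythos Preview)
Your proposal is correct and follows essentially the same approach as the paper's proof: both invoke Theorem~\ref{connection}(\rmnum{1}) to write $Q^\T = W(H)(W(G))^{-1}$, bring in the Smith normal form $UW(G)V = \diag[d_1,\ldots,d_n]$, and observe that $d_n Q^\T = W(H)\, V\, \diag[d_n/d_1,\ldots,d_n/d_{n-1},1]\, U$ is a product of integral matrices. Your added remarks on the integrality of $W(H)$ and the equality $\ell(Q) = \ell(Q^\T)$ are sound but not explicitly spelled out in the paper.
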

\begin{proof}
	Let $U$ and $V$ be unimodular matrices such that $UW(G)V$ is $\diag[d_1,d_2,\ldots,d_n]$, the Smith normal form of $W(G)$. By Theorem \ref{connection} (\rmnum{1}), we have
	$$d_n Q^\T=d_nW(H)(W(G))^{-1}=W(H)V\diag\left[\frac{d_n}{d_1},\ldots,\frac{d_n}{d_{n-1}},1\right]U,$$
	indicating that $d_n Q^\T$ is an integral matrix. Thus, $\ell(Q)\mid d_n$, as desired.
	\end{proof}
Lemma \ref{lqc} was extended to almost controllable graphs in \cite{wang2021EUJC} using a modified walk matrix.
\begin{definition}[\cite{wang2021EUJC}]\label{defxi}
	Let $G$ be an almost controllable graph of order $n$. Write
	\begin{equation*}
	\xi(G)=\left(\begin{matrix}
	W_{1n}(G)\\
	W_{2n}(G)\\
	\vdots\\
	W_{nn}(G)\\
	\end{matrix}
	\right) \text{~and~} W_\delta(G)=[e,A(G)e,\ldots,A^{n-2}(G)e,(-1)^\delta\frac{1}{2^{\lfloor\frac{n}{2}\rfloor-1}}\cdot \xi(G)], \delta\in\{0,1\},
	\end{equation*}
	where $W_{i,n}(G)$ denotes the algebraic cofactor of the $(i,n)$-entry for $W(G)$.
\end{definition}
\begin{lemma}[\cite{wang2021EUJC}]\label{lqa}
If $G$ is almost controllable, then $\ell(Q)\mid d_n(W_0(G))$ for any $Q\in \mathcal{Q}(G)$.
\end{lemma}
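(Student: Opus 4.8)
The plan is to run the argument of Lemma~\ref{lqc} with the modified walk matrix $W_0(G)$ --- which, unlike $W(G)$, is a \emph{nonsingular} integral matrix --- in the role played there by $W(G)$. The first (routine) step is the reduction: if $Q\in\mathcal{Q}(G)$ and $Q^\T A(G)Q=A(H)$, then $Qe=e$ forces $Q^\T e=e$, whence $Q^\T A^{k}(G)e=A^{k}(H)Q^\T e=A^{k}(H)e$ for all $k\ge 0$; reading off columns gives $Q^\T W(G)=W(H)$. As $Q$ is invertible this also shows $\rank W(H)=\rank W(G)=n-1$, so $H$ is almost controllable and $\xi(H),W_\delta(H)$ make sense.

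The heart of the matter is the behaviour of $Q^\T$ on the last column of $W_0(G)$, i.e.\ on $\xi(G)$. A cofactor computation --- expanding $\det W(G)=0$ together with the (vanishing) determinants obtained from $W(G)$ by duplicating a column --- gives $\xi(G)^\T W(G)=0$, so $\xi(G)$ is a nonzero vector spanning the one-dimensional left kernel of $W(G)$, and similarly for $\xi(H)$. Transposing $Q^\T W(G)=W(H)$ to $W(G)^\T Q=W(H)^\T$ shows that $Q$ maps the left kernel of $W(H)$ onto that of $W(G)$, so $Q^\T\xi(G)=c\,\xi(H)$ for a nonzero rational scalar $c$. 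Since $Q^\T$ is orthogonal, $|c|=\lVert\xi(G)\rVert/\lVert\xi(H)\rVert$; at this point I would invoke from \cite{wang2021EUJC} that $2^{\lfloor n/2\rfloor-1}$ divides $\xi(G)$ entrywise (so that $W_0(G)$ is genuinely integral and $d_n(W_0(G))$ is defined) and that $\lVert\xi(G)\rVert$ --- equivalently $\lvert\det W_0(G)\rvert=\lVert\xi(G)\rVert^{2}/2^{\lfloor n/2\rfloor-1}$ --- is an invariant under generalized cospectrality. This forces $c=\pm1$, and choosing $\delta\in\{0,1\}$ with $(-1)^\delta=c$ we obtain the identity $Q^\T W_0(G)=W_\delta(H)$ between two nonsingular integral matrices.

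The finish is then exactly as in Lemma~\ref{lqc}. With unimodular $U,V$ such that $UW_0(G)V=\diag[d_1,\ldots,d_n]$ is the Smith normal form of $W_0(G)$ and $d_n=d_n(W_0(G))$,
\[
d_n Q^\T=d_n W_\delta(H)(W_0(G))^{-1}=W_\delta(H)V\diag\left[\frac{d_n}{d_1},\ldots,\frac{d_n}{d_{n-1}},1\right]U,
\]
which is integral since $d_i\mid d_n$ for all $i$ and $U,V,W_\delta(H)$ are integral; hence $\ell(Q)\mid d_n(W_0(G))$.

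The opening reduction and the closing Smith-normal-form step are direct transcriptions of the controllable case, so the only real content --- and the step I expect to be the main obstacle --- is pinning down $c=\pm1$. This is precisely what the otherwise mysterious factor $(-1)^\delta\,2^{-(\lfloor n/2\rfloor-1)}$ in Definition~\ref{defxi} is built for, and it depends on the $2$-adic divisibility and norm-invariance properties of $\xi$ from \cite{wang2021EUJC} rather than on anything in the excerpt above.
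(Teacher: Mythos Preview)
Your proposal is correct and follows essentially the same route as the paper. The paper simply cites \cite{wang2021EUJC} for the identity $Q^\T=W_0(H)(W_\delta(G))^{-1}$ (equivalently, your $Q^\T W_0(G)=W_\delta(H)$), notes that $W_0(G)$ and $W_1(G)$ share the same Smith normal form, and then repeats the argument of Lemma~\ref{lqc}; your write-up differs only in that you unpack the cited step by the kernel/norm argument showing $c=\pm1$.
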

\begin{proof}
Let $H$ be the  graph such that $A(H)=Q^\T A(G)Q$. Then $W(H)=Q^\T W(G)$ and hence $\rank~W(H)=\rank~W(G)=n-1$, that is, $H$ is almost controllable. It was shown in \cite{wang2021EUJC} that $Q^\T=W_0(H)(W_0(G))^{-1}$ or $Q^\T=W_0(H)(W_1(G))^{-1}$. Noting that  $W_0(G)$ and $W_1(G)$ have the same Smith normal form, the lemma follows by the same argument as in Lemma \ref{lqc}.
\end{proof}
\begin{definition}\normalfont{
	Let $f(x)=x^n+a_1x^{n-1}+a_2x^{n-2}+\cdots+a_n$ be a monic polynomial with integral matrix. The \emph{discriminant} of $f(x)$ is
	\begin{equation}
	\Delta(f)=\prod_{i>j}(\alpha_i-\alpha_j)^2,
	\end{equation}
	where $\alpha_1,\alpha_2,\ldots,\alpha_n$ are roots of $f(x)$ over $\mathbb{C}$. For a graph $G$, the \emph{discriminant} of $G$, denoted by $\Delta(G)$ is defined to be the discriminant of its characteristic polynomial, i.e.,  $\Delta(G)=\Delta(\det(xI-A(G)))$. }
\end{definition}
It is known that $\Delta(f)=\pm\textup{Res}(f,f')$, where $\textup{Res}(f,f')$ is the resultant of $f$ and its derivative $f^{'}$, see, e.g., \cite{lang2002}. This means that $\Delta(G)$ is always an integer.
\begin{lemma}[\cite{wang2006arxiv}]\label{lqd}
	Let $G$ be a graph and $Q$ be any rational matrix in $\mathcal{Q}(G)$. If $p$ is an odd prime factor of $\ell(Q)$ then $p^2\mid \Delta(G)$.
\end{lemma}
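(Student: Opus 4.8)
The plan is to argue $p$-adically and by contradiction: assuming $p\mid\ell(Q)$ but $p^2\nmid\Delta(G)$, I will show that $Q$ is forced to have all entries $p$-integral, hence $p\nmid\ell(Q)$, a contradiction. First the reductions. If $\Delta(G)=0$ there is nothing to prove, so assume $\Delta(G)\neq 0$; then $f(x):=\det(xI-A(G))$ is squarefree and equals the minimal polynomial of $A:=A(G)$. Since $Q\in\mathcal{Q}(G)$, Theorem~\ref{connection} gives that $B:=Q^\T A Q$ is the adjacency matrix of some graph $H$; thus $A,B$ are integral symmetric matrices sharing the characteristic (hence minimal) polynomial $f$, and $\Delta(G)=\Delta(f)$. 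Suppose, for contradiction, that the $p$-adic valuation $v_p(\Delta(G))$ is at most $1$.

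The key step — and, I expect, the only one requiring an idea rather than bookkeeping — is to observe that this forces the order $R:=\mathbb{Z}_p[x]/(f)=\mathbb{Z}_p[A]$ to be the \emph{maximal} order of the \'etale $\mathbb{Q}_p$-algebra $K:=\mathbb{Q}_p[x]/(f)=\prod_i L_i$. Indeed $\mathrm{disc}(R)=\pm\Delta(f)$, and for the maximal order $\mathcal{O}_K=\prod_i\mathcal{O}_{L_i}\supseteq R$ one has $v_p(\mathrm{disc}\,R)=v_p(\mathrm{disc}\,\mathcal{O}_K)+2\,v_p([\mathcal{O}_K:R])$; since the left side is $\le 1$ and both terms on the right are non-negative integers, $v_p([\mathcal{O}_K:R])=0$, i.e. $R=\mathcal{O}_K$. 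One might fear that the borderline case $v_p(\Delta(G))=1$ (where $f$ genuinely has a repeated factor mod $p$) needs a separate, more delicate treatment, but the conductor–discriminant relation makes $\mathbb{Z}_p[A]$ behave exactly as in the unramified case. After this, everything is standard localization.

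Because $f$ is the minimal polynomial of $A$ and of $B$, the lattice $\mathbb{Z}_p^n$ is a faithful, torsion-free $R$-module of $\mathbb{Z}_p$-rank $n=\deg f$, both under the action through $A$ and under that through $B$; a module of this kind over the product of discrete valuation rings $R=\mathcal{O}_K$ is free of rank one. Composing the two resulting $R$-module isomorphisms yields $T\in GL_n(\mathbb{Z}_p)$ with $TA=BT$. Now set $M:=\ell(Q)\,Q$, an integral matrix with $M^\T M=\ell(Q)^2 I$ (orthogonality of $Q$) and $AM=MB$ (equivalent to $Q^\T AQ=B$). Then $A(MT)=(MT)A$, and since $f$ is squarefree the centralizer of $A$ in $M_n(\mathbb{Q}_p)$ is $\mathbb{Q}_p[A]\cong K$; hence $MT=c$ for some $c\in\mathbb{Q}_p[A]$, i.e. $M=cT^{-1}$.

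Finally, run the valuation computation. Since $A$ is symmetric, every element of $\mathbb{Q}_p[A]$ is symmetric, so $M^\T M=\ell(Q)^2 I$ becomes $c^2=\ell(Q)^2\,(T^\T T)$. Here $T^\T T\in M_n(\mathbb{Z}_p)\cap\mathbb{Q}_p[A]=\mathbb{Z}_p[A]=R$ (this is exactly where maximality is used) and it has $\mathbb{Z}_p$-unit determinant, so $T^\T T\in R^\times=\prod_i\mathcal{O}_{L_i}^\times$. Writing $c=(c_i)\in\prod_i L_i$ and $\ell=\ell(Q)$, the component equations $c_i^2=\ell^2\cdot(\text{unit})$ give $v_{L_i}(c_i)=v_{L_i}(\ell)$ for every $i$, hence $c/\ell\in R^\times\subseteq GL_n(\mathbb{Z}_p)$. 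Therefore $Q=M/\ell=(c/\ell)T^{-1}\in GL_n(\mathbb{Z}_p)$, so all entries of $Q$ lie in $\mathbb{Z}_p$ and $p\nmid\ell(Q)$, contradicting $p\mid\ell(Q)$. Hence $p^2\mid\Delta(G)$. (The argument does not seem to use oddness of $p$; for $p=2$ a separate power-of-$2$ phenomenon, already visible in the walk matrix, makes the statement less relevant in applications.)
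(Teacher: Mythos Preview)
The paper does not supply its own proof of this lemma; it is quoted from \cite{wang2006arxiv} with a bare citation. So there is nothing in the present paper to compare against, and your argument has to be judged on its own merits.

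Your proof is correct. The line of attack is genuinely number-theoretic: you localize at $p$, use the conductor--discriminant relation to force $\mathbb{Z}_p[A]$ to be the maximal order $\mathcal{O}_K$ of $K=\mathbb{Q}_p[x]/(f)$ whenever $v_p(\Delta(G))\le 1$, and then exploit that finitely generated torsion-free modules over the DVRs $\mathcal{O}_{L_i}$ are free to produce a $\mathbb{Z}_p$-integral conjugacy $T$ between $A$ and $B$. Every step checks out. The identity $A(MT)=(MT)A$ is verified by $AM=MB$ and $TA=BT$; non-derogatoriness of $A$ (since $f$ is squarefree of degree $n$) gives $MT\in\mathbb{Q}_p[A]$; and the passage from $M^\T M=\ell^2 I$ to $c^2=\ell^2\,T^\T T$ is a straight matrix manipulation using $c^\T=c$. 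The point you flag, $M_n(\mathbb{Z}_p)\cap\mathbb{Q}_p[A]=\mathbb{Z}_p[A]$, does indeed rely on maximality: it is the statement that the multiplier ring of the lattice $\mathbb{Z}_p^n\cong R$ inside $K$ is $R$ itself. Finally, $\det(T^\T T)\in\mathbb{Z}_p^\times$ identifies $T^\T T$ with a unit of $\mathcal{O}_K$, and the valuation count $v_{L_i}(c_i)=v_{L_i}(\ell)$ lands $Q=(c/\ell)T^{-1}$ in $GL_n(\mathbb{Z}_p)$, giving the contradiction.

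Your closing remark is also accurate: nothing in the argument uses that $p$ is odd. The paper singles out odd $p$ only because for $p=2$ the walk matrix already forces $2^{\lfloor n/2\rfloor}\mid\det W(G)$, so the conclusion $4\mid\Delta(G)$ carries no practical information in the algorithm.
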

For a nonzero integer $m$ and a prime $p$, we use $\ord_p(m)$ to denote the maximum nonnegative integer $k$ such that $p^k$ divides $m$. For a set $S$ of integers, we use $\lcm \,S$ to denote the least common multiple of integers in $S$.
\begin{definition}
	Let $G$ be  controllable or almost controllable.	Define \begin{equation}
	\ell(G)=\lcm\{\ell(Q)\colon\,Q\in \mathcal{Q}(G)\}.
	\end{equation}
\end{definition}
 We summarize Lemmas \ref{lqc}-\ref{lqd} as the following proposition, which  we shall refer to as the basic estimate of $\ell(G)$.
\begin{proposition}\label{lG1}
	Let $G$ be controllable or almost controllable. Write
	\begin{equation}
	d=\begin{cases}
	d_n(W(G))&\text{if~} G\text{~is controllable,}\\
	d_n(W_0(G))&\text{if~} G\text{~is almost controllable.}
	\end{cases}
	\end{equation}
	Then we have\\	
	\textup{(\rmnum{1})} $\ord_2(\ell(G))\le \ord_2(d)$;\\	
	\textup{(\rmnum{2})} for odd prime $p$, $\ord_p(\ell(G))\le \ord_p(d)$ if $p\mid\gcd(\Delta(G),d)$ and $p^2\mid\Delta(G)$; otherwise $\ord_p(\ell(G))=0$.
\end{proposition}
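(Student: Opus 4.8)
The plan is to derive everything from the three level-divisibility lemmas together with the elementary fact that the least common multiple of a family of divisors of an integer $d$ is itself a divisor of $d$.

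First I would observe that by Lemma~\ref{lqc} (in the controllable case) or Lemma~\ref{lqa} (in the almost controllable case), every $Q\in\mathcal{Q}(G)$ satisfies $\ell(Q)\mid d$. Since $\mathcal{Q}(G)$ is nonempty (it contains the identity) and each $\ell(Q)$ divides $d$, the set $\{\ell(Q)\colon Q\in\mathcal{Q}(G)\}$ is a finite set of positive divisors of $d$, so $\ell(G)=\lcm\{\ell(Q)\colon Q\in\mathcal{Q}(G)\}$ is well defined and $\ell(G)\mid d$. In particular $\ord_p(\ell(G))\le\ord_p(d)$ for every prime $p$; this immediately yields part~(\rmnum{1}) and the inequality asserted in the first half of part~(\rmnum{2}).

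It then remains to treat the ``otherwise'' clause of part~(\rmnum{2}): I must show that if $p$ is an odd prime for which $p\nmid\gcd(\Delta(G),d)$ or $p^2\nmid\Delta(G)$, then $p\nmid\ell(G)$, equivalently $p\nmid\ell(Q)$ for every $Q\in\mathcal{Q}(G)$. I would split into two cases. If $p^2\nmid\Delta(G)$, then by the contrapositive of Lemma~\ref{lqd} the odd prime $p$ cannot divide $\ell(Q)$ for any rational $Q\in\mathcal{Q}(G)$; since every $Q\in\mathcal{Q}(G)$ is rational by Theorem~\ref{connection}, this gives $p\nmid\ell(G)$. If instead $p^2\mid\Delta(G)$ but $p\nmid\gcd(\Delta(G),d)$, then because $p\mid\Delta(G)$ we must have $p\nmid d$, and since $\ell(Q)\mid d$ for all $Q$, again $p\nmid\ell(Q)$ for every $Q$, hence $p\nmid\ell(G)$. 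Together with $\ell(G)\mid d$ established above, these two cases give part~(\rmnum{2}) in full.

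I do not expect a serious obstacle here: the argument is essentially bookkeeping that assembles Lemmas~\ref{lqc}--\ref{lqd}. The only point requiring a little care is the case analysis on an odd prime $p$ --- one has to keep straight the two independent dichotomies ``$p^2\mid\Delta(G)$ or not'' and ``$p\mid d$ or not'' and check that, outside the single surviving case $p\mid\gcd(\Delta(G),d)$ together with $p^2\mid\Delta(G)$, one of Lemmas~\ref{lqa}/\ref{lqc} or Lemma~\ref{lqd} already forces $p$ out of $\ell(Q)$. One should also remember that the three lemmas are stated for an individual $Q$, so the passage from these to a statement about $\ell(G)$ invokes the ``$\lcm$ of divisors is a divisor'' observation once more.
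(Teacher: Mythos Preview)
Your proposal is correct and follows exactly the approach the paper intends: the paper presents Proposition~\ref{lG1} explicitly as a summary of Lemmas~\ref{lqc}--\ref{lqd} without giving a separate proof, and your argument is precisely the straightforward bookkeeping that assembles those lemmas (together with the divisibility $\ell(G)\mid d$) into the stated bounds.
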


\begin{example}\label{g13}\normalfont{
	Let $G$ be a graph with $n=13$ vertices whose adjacency matrix is
	
\begin{equation}A=\small{\left(
\begin{array}{ccccccccccccc}
	0 & 1 & 0 & 0 & 1 & 1 & 0 & 0 & 0 & 0 & 1 & 0 & 0 \\
1 & 0 & 1 & 1 & 0 & 0 & 0 & 0 & 0 & 1 & 1 & 1 & 1 \\
0 & 1 & 0 & 1 & 1 & 0 & 0 & 1 & 1 & 0 & 1 & 0 & 0 \\
0 & 1 & 1 & 0 & 1 & 0 & 1 & 0 & 0 & 0 & 1 & 0 & 0 \\
1 & 0 & 1 & 1 & 0 & 0 & 1 & 0 & 0 & 0 & 1 & 0 & 0 \\
1 & 0 & 0 & 0 & 0 & 0 & 0 & 1 & 1 & 1 & 0 & 0 & 1 \\
0 & 0 & 0 & 1 & 1 & 0 & 0 & 0 & 0 & 0 & 1 & 0 & 0 \\
0 & 0 & 1 & 0 & 0 & 1 & 0 & 0 & 0 & 1 & 0 & 0 & 0 \\
0 & 0 & 1 & 0 & 0 & 1 & 0 & 0 & 0 & 0 & 0 & 0 & 0 \\
0 & 1 & 0 & 0 & 0 & 1 & 0 & 1 & 0 & 0 & 1 & 0 & 0 \\
1 & 1 & 1 & 1 & 1 & 0 & 1 & 0 & 0 & 1 & 0 & 0 & 1 \\
0 & 1 & 0 & 0 & 0 & 0 & 0 & 0 & 0 & 0 & 0 & 0 & 1 \\
0 & 1 & 0 & 0 & 0 & 1 & 0 & 0 & 0 & 0 & 1 & 1 & 0 \\
\end{array}
\right).}
\end{equation}
}
\end{example}
Using Mathematica, we find that the Smith normal form of $W(G)$ is
	\begin{equation}
\diag[\underbrace{1,\ldots,1}_{7},\underbrace{2,\ldots,2}_{4},8,967498002648]
\end{equation}
and $\Delta(G)=2049840225216075785191098057600067625877504$. Although both $\Delta(G)$ and $d_n$ are huge, the great common divisor of these two numbers is small. Indeed, it can be obtained that
$\gcd(d_n,\Delta(G))=72=2^3 \times 3^2$. Using the basic estimate of $\ell(G)$, we obtain
$\ord_2(\ell(G))\le \ord_2(d)=3$, $\ord_3(\ell(G))\le \ord_3(d)=2$, and $\ord_p(\ell(G))=0$ for any prime other than 2 and 3. Thus, $\ell(G)$ divides 72.

\subsection{Improved estimates}
	Let $p$ be an odd prime and  $f\in \mathbb{F}_p[x]$ be a monic polynomial over the field $\mathbb{F}_p$. Now let $f = \prod_{1\le i\le r}f_i^{e_i}$
be the irreducible factorization of $f$, with distinct monic irreducible polynomials $f_1,f_2,\ldots, f_r$ and positive integers $e_1,e_2,\ldots, e_r$. The \emph{square-free part} of $f$, denoted by $\sfp (f)$,  is $\prod_{1\le i\le r}f_i$, see \cite[p.~394]{gathen}.
\begin{definition}[\cite{wang2021EUJC}]\normalfont{
	Let $G$ be a graph and $p$ be an odd prime. The $p$-\emph{main} polynomial of $G$, denoted by $m_p(G;x)$,  is the monic polynomial $f\in \mathbb{F}_p[x]$ of smallest degree such that $f(A)e=0$.}
\end{definition}

In \cite{wangzhu}, it was noted that two generalized cospectral graphs may have different $p$-main polynomials for some prime $p$. However, the authors proved that under certain conditions, the $p$-main polynomial of $G$ is constant among all  generalized cospectral mates, which reads
\begin{equation}\label{mpequ}
m_p(G;x)=\frac{\chi(G;x)}{\sfp(\gcd(\chi(A;x),\chi(A+J;x)))},
\end{equation}
where  $\chi(G;x)=\chi(A;x)=\det (xI-A)$ is the characteristic polynomial of $G$ and all computations are done over $\mathbb{F}_p$.  For simplicity, we use $\tilde{m}_p(G;x)$ to denote the right hand side in Eq.~\eqref{mpequ}. Of course,  $\tilde{m}_p(G;x)$ is invariant under generalized cospectrality. The following two lemmas were essentially proved in \cite{wangzhu}.
\begin{lemma}
	$n\ge \deg \tilde{m}_p(G;x)\ge \deg m_p(G;x)=\rank_p\,W(G)$, where the first equality is strict if and only if $\rank_p\,W(G)<n$.
\end{lemma}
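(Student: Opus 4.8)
The plan is to treat the chain in three pieces; the middle inequality is the substantive one. The rightmost equality $\deg m_p(G;x)=\rank_p W(G)$ is pure linear algebra: viewing $\mathbb{F}_p^n$ as an $\mathbb{F}_p[x]$-module with $x$ acting as $A:=A(G)$, the polynomial $m_p(G;x)$ is by definition the monic generator of the annihilator of $e$, so $\deg m_p(G;x)=\dim_{\mathbb{F}_p}\langle e\rangle_A$, where $\langle e\rangle_A$ is the $A$-cyclic subspace spanned by $e,Ae,A^2e,\dots$. By the Cayley--Hamilton theorem over $\mathbb{F}_p$, the powers $A^ke$ with $k\ge n$ already lie in the span of $e,Ae,\dots,A^{n-1}e$, so $\dim_{\mathbb{F}_p}\langle e\rangle_A=\rank_p[e,Ae,\dots,A^{n-1}e]=\rank_p W(G)$, which is $\le n$ since $W(G)$ is $n\times n$. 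This settles the rightmost equality and the outer bound.

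For $\deg\tilde m_p(G;x)\ge\deg m_p(G;x)$ I would prove the stronger divisibility $m_p(G;x)\mid\tilde m_p(G;x)$ over $\mathbb{F}_p$. Write $\chi:=\chi(G;x)$ and $\bar n:=\chi/m_p(G;x)$, a polynomial because $\chi(A)e=0$ forces $m_p(G;x)\mid\chi$. Since $\tilde m_p(G;x)=\chi/\sfp(\gcd(\chi,\chi(A+J;x)))$ with $J=ee^\T$, the divisibility is equivalent to $\sfp(\gcd(\chi,\chi(A+J;x)))\mid\bar n$, i.e.\ to: every irreducible factor $q$ of $\gcd(\chi,\chi(A+J;x))$ satisfies $\ord_q m_p(G;x)<\ord_q\chi$. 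To get hold of the gcd I would use the matrix--determinant identity $\chi(A+J;x)=\chi-e^\T\operatorname{adj}(xI-A)\,e$ (an identity of integer polynomials, hence valid mod $p$), so $\gcd(\chi,\chi(A+J;x))=\gcd(\chi,\phi)$ with $\phi:=e^\T\operatorname{adj}(xI-A)e$; writing $e^\T(xI-A)^{-1}e=\phi/\chi=N/d$ in lowest terms yields $\gcd(\chi,\chi(A+J;x))=\chi/d$, where $d$ (the denominator in lowest terms) is the minimal linear-recurrence polynomial of the sequence $c_k:=e^\T A^ke$ and satisfies $d\mid m_p(G;x)$ (as $m_p(A)e=0$ makes $m_p(G;x)$ a recurrence polynomial for $(c_k)$). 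Thus the inequality reduces to the claim: if $\ord_q m_p(G;x)=\ord_q\chi$ then $\ord_q d=\ord_q\chi$ as well, so $q\nmid\chi/d$.

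This last claim is where the symmetry $A=A^\T$ is used, and it is the step I expect to be the main obstacle, exactly because the standard bilinear form can become degenerate on subspaces over $\mathbb{F}_p$. Decompose $\mathbb{F}_p^n=\bigoplus_q V_q$ into primary components of the module over the distinct irreducible factors $q$ of $\chi$. A short Bézout argument using $A=A^\T$ shows that distinct $V_q$ are orthogonal for the standard form, so nondegeneracy on $\mathbb{F}_p^n$ descends to nondegeneracy on each $V_q$. Now if $\ord_q m_p(G;x)=\ord_q\chi=:a$, then sandwiching $\ord_q m_p(G;x)\le\ord_q\mu_A\le\ord_q\chi$ (with $\mu_A$ the minimal polynomial of $A$) forces $V_q$ to be cyclic, $V_q\cong\mathbb{F}_p[x]/(q^a)$, with the $V_q$-component $e_q$ of $e$ a generator, so $\langle e_q\rangle_A=V_q$. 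Since $\langle e\rangle_A=\bigoplus_q\langle e_q\rangle_A$ is an orthogonal decomposition, a recurrence polynomial $g$ of $(c_k)$ is exactly one with $g(A)e_{q'}\perp\langle e_{q'}\rangle_A$ for every $q'$; for our $q$ this reads $g(A)e_q\perp V_q$, and nondegeneracy on $V_q$ forces $g(A)e_q=0$, i.e.\ $q^a\mid g$. Hence $\ord_q d=a=\ord_q\chi$, so $q\nmid\chi/d$, which is what was needed. Combining over all such $q$ gives $\sfp(\gcd(\chi,\chi(A+J;x)))\mid\bar n$ and therefore $m_p(G;x)\mid\tilde m_p(G;x)$.

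Finally, for the strictness clause: from $\gcd(\chi,\chi(A+J;x))=\chi/d$ with $\deg d\le\deg m_p(G;x)=\rank_p W(G)$, if $\rank_p W(G)=n$ then $\langle e\rangle_A=\mathbb{F}_p^n$, so $d(A)e\perp\mathbb{F}_p^n$ and nondegeneracy gives $d(A)e=0$, whence $d=m_p(G;x)=\chi$ and $\gcd(\chi,\chi(A+J;x))=1$, so $\deg\tilde m_p(G;x)=n$; while if $\rank_p W(G)<n$ then $\deg(\chi/d)=n-\deg d\ge n-\rank_p W(G)\ge1$, so $\sfp(\chi/d)$ is nonconstant and $\deg\tilde m_p(G;x)<n$. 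Thus the first inequality $n\ge\deg\tilde m_p(G;x)$ is strict precisely when $\rank_p W(G)<n$.
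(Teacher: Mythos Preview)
The paper does not actually supply a proof of this lemma; it states that this lemma and the next one ``were essentially proved in \cite{wangzhu}'' and moves on. So there is no in-paper argument to compare against directly.

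Your proof is correct and self-contained. The reduction of $\gcd(\chi,\chi(A+J;x))$ to $\chi/d$ via the matrix--determinant identity $\det(xI-A-ee^\T)=\chi-e^\T\operatorname{adj}(xI-A)e$ and the generating-function interpretation $e^\T(xI-A)^{-1}e=\sum_{k\ge 0}c_kx^{-k-1}$ is a clean way to bring in the minimal recurrence polynomial $d$ of the moment sequence $(c_k)$. Your worry about possible degeneracy of the standard form on subspaces over $\mathbb{F}_p$ is unfounded in exactly the way you resolve it: the orthogonal primary decomposition $\mathbb{F}_p^n=\bigoplus_q V_q$ (valid because $A=A^\T$ makes distinct primary components orthogonal) transfers nondegeneracy from the whole space to each $V_q$. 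The crux---that $\ord_q m_p=\ord_q\chi=:a$ forces $\dim\langle e_q\rangle_A=a\deg q=\dim V_q$, so $V_q$ is cyclic with generator $e_q$, and then nondegeneracy on $V_q$ upgrades $g(A)e_q\perp V_q$ to $g(A)e_q=0$, giving $q^a\mid d$---is handled correctly. The strictness clause also goes through as you wrote it: when $\rank_pW=n$ the same orthogonality argument applied globally forces $d=m_p=\chi$, while $\rank_pW<n$ makes $\chi/d$ nonconstant and hence $\deg\tilde m_p<n$.
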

\begin{lemma}\label{ubm}
	Let $G$ and $H$ be generalized cospectral mates with adjacent matrices $A$ and $B$, respectively. Write $\tilde{m}_p(x)=\tilde{m}_p(G;x)=\tilde{m}_p(H;x)$. Then $\tilde{m}_p(A)e=\tilde{m}_p(B)e=0$ over $\mathbb{F}_p$.
\end{lemma}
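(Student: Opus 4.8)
The plan is to deduce the lemma from a graph-by-graph divisibility valid for \emph{every} graph $G$, namely $m_p(G;x)\mid\tilde{m}_p(G;x)$ over $\mathbb{F}_p$. Granting this, since $\tilde{m}_p$ is invariant under generalized cospectrality we have $\tilde{m}_p(G;x)=\tilde{m}_p(H;x)=:\tilde{m}_p(x)$; applying the divisibility to $G$ and to $H$, and using that by the definition of $m_p$ a polynomial $f\in\mathbb{F}_p[x]$ satisfies $f(A)e=0$ if and only if $m_p(G;x)\mid f$ (and likewise for $H,B$), one obtains $\tilde{m}_p(A)e=\tilde{m}_p(B)e=0$, which is the assertion. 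So it remains to prove $m_p(G;x)\mid\tilde{m}_p(G;x)$ for an arbitrary $G$.

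To do so I would first rewrite $\tilde{m}_p(G;x)=\chi(A;x)/\sfp(\gcd(\chi(A;x),\chi(A+J;x)))$ more usefully. By the matrix determinant lemma, $\chi(A+J;x)=\chi(A;x)-\phi(x)$ over $\mathbb{F}_p[x]$, where $\phi(x)=e^\T\textup{adj}(xI-A)e$; hence $g_0:=\gcd(\chi(A;x),\chi(A+J;x))=\gcd(\chi(A;x),\phi(x))$. Writing the walk generating function $e^\T(xI-A)^{-1}e=\phi(x)/\chi(A;x)$ in lowest terms, its reduced denominator $m'(x)$ satisfies $g_0=\chi(A;x)/m'$, so $\tilde{m}_p(G;x)=\chi(A;x)/\sfp(\chi(A;x)/m')$. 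Over $\mathbb{F}_p$ one has $m'\mid m_p$, because the sequence $(e^\T A^k e)_{k\ge0}$ obeys the linear recurrence with characteristic polynomial $m_p$ (as $m_p(A)e=0$), so its minimal polynomial $m'$ divides $m_p$; and $m_p\mid\chi(A;x)$ by Cayley--Hamilton and minimality. Now fix a monic irreducible $\pi\in\mathbb{F}_p[x]$ and set $a=\ord_\pi(\chi(A;x))$, $b=\ord_\pi(m_p)$, $c=\ord_\pi(m')$, so $c\le b\le a$. Since $\sfp(\chi(A;x)/m')$ has $\pi$-exponent $\min\{a-c,1\}$, the divisibility $m_p\mid\tilde{m}_p(G;x)$ amounts, prime by prime, to $b\le a-\min\{a-c,1\}$, which is automatic unless $b=a>c$. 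Hence it suffices to show that $\ord_\pi(m_p)=\ord_\pi(\chi(A;x))$ implies $\ord_\pi(m')=\ord_\pi(\chi(A;x))$.

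For this local claim, decompose $\mathbb{F}_p^n=\bigoplus_j V_{\pi_j}$ into $A$-primary components $V_{\pi_j}=\ker\pi_j(A)^n$. The hypothesis $b=a$ forces $\ord_\pi(\mu_A)=a$ for the minimal polynomial $\mu_A$ of $A$ (because $m_p\mid\mu_A\mid\chi(A;x)$), so $V_\pi$ is a single cyclic block, isomorphic to $R:=\mathbb{F}_p[t]/(\pi(t)^a)$ with $A$ acting as multiplication by $t$; and since the $\pi$-part of $m_p$ is exactly the minimal polynomial of the $V_\pi$-component $e_\pi$ of $e$ inside $V_\pi$, the hypothesis $b=a$ says precisely that $e_\pi$ generates $V_\pi$, i.e. $e_\pi$ is a unit of the local ring $R$. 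Because $A$ is symmetric, every polynomial in $A$ is symmetric; applying the CRT idempotent $r(A)$ (a polynomial in $A$, the projection onto $V_\pi$) to $v^\T w=(r(A)v)^\T w=v^\T r(A)w$ shows that distinct primary components are mutually orthogonal for the standard inner product, and therefore the restriction $\beta$ of that form to $V_\pi$ is non-degenerate. Orthogonality also splits $e^\T(xI-A)^{-1}e=\sum_j e_j^\T(xI-A|_{V_{\pi_j}})^{-1}e_j$ into a sum of rational functions with pairwise coprime denominators (powers of the distinct $\pi_j$), so $\ord_\pi(m')$ equals the $\pi$-order of the reduced denominator of $e_\pi^\T(xI-C)^{-1}e_\pi$, where $C:=A|_{V_\pi}$. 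Identifying $V_\pi$ with $R$, self-adjointness of $C$ yields $\beta(u,v)=\lambda(uv)$ with $\lambda:=\beta(1,\cdot)$, and expanding $(xI-C)^{-1}e_\pi=\sum_{k\ge0}(t^k e_\pi)x^{-(k+1)}$ gives $e_\pi^\T(xI-C)^{-1}e_\pi=\sum_{k\ge0}\lambda(t^k e_\pi^2)\,x^{-(k+1)}$, whose reduced denominator is the minimal polynomial of the sequence $(\lambda(t^k e_\pi^2))_k$. As $\{t^k\}$ spans $R$ over $\mathbb{F}_p$ and $\beta$ is non-degenerate, a polynomial $q$ annihilates this sequence iff $q(t)e_\pi^2=0$ in $R$; since $e_\pi^2$ is a unit, this forces $\pi^a\mid q$, so the minimal polynomial is $\pi^a$ and $\ord_\pi(m')=a$, as needed.

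The main obstacle is the local claim just sketched. Over $\mathbb{F}_p$ the matrix $A$ need not be diagonalizable, and ranks of walk and Hankel matrices can drop, so one cannot argue with eigenvectors; the symmetry of $A$ must instead be exploited through the primary decomposition --- orthogonality of the components and non-degeneracy of the restricted bilinear form --- together with the fact that in the local ring $\mathbb{F}_p[t]/(\pi^a)$ a unit generates everything. The only remaining point requiring care is the bookkeeping with $\sfp$: it is precisely the square-free reduction that makes the inequality $b\le a-\min\{a-c,1\}$ tight exactly in the boundary case $b=a$, which is why the correct target is the conditional local statement above rather than the equality $m_p(G;x)=\tilde{m}_p(G;x)$, which can fail.
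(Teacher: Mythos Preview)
The paper does not actually prove this lemma; it simply records that it ``was essentially proved in \cite{wangzhu}'' and moves on. So there is no in-paper argument to compare against.

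Your proposal is correct and gives a complete, self-contained proof. The reduction to the single-graph divisibility $m_p(G;x)\mid\tilde m_p(G;x)$ is exactly the right move, and your rewriting of $\tilde m_p$ via the matrix-determinant identity $\chi(A+J;x)=\chi(A;x)-e^\T\textup{adj}(xI-A)e$, together with the identification of the reduced denominator $m'$ of $e^\T(xI-A)^{-1}e$, isolates the only nontrivial case cleanly as the local implication ``$\ord_\pi(m_p)=\ord_\pi(\chi)\Rightarrow\ord_\pi(m')=\ord_\pi(\chi)$''. The heart of your argument---using symmetry of $A$ to show the primary components are mutually orthogonal, hence that the standard form restricts non-degenerately to the single cyclic block $V_\pi\cong\mathbb{F}_p[t]/(\pi^a)$, and then computing the recurrence minimal polynomial of $(\lambda(t^k e_\pi^2))_k$ inside that local ring---is sound; the key point that $e_\pi$ (hence $e_\pi^2$) is a unit in the local ring precisely when $b=a$ is what makes the annihilator collapse to $\pi^a$. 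Two small remarks: (i) your derivation that $V_\pi$ is a \emph{single} cyclic block uses both $\ord_\pi(\mu_A)=a$ and $\ord_\pi(\chi)=a$, which you have; (ii) the orthogonal splitting of $e^\T(xI-A)^{-1}e$ into summands with pairwise coprime denominators really does give the product as the reduced denominator, since a common irreducible factor of numerator and denominator would force a cancellation in one summand already.
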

We mention that Lemma \ref{ubm} even holds for $p=2$, but the special case $p=2$ had been settled more effectively in \cite{qiuarXiv}.
\begin{lemma}[\cite{qiuarXiv}]\label{ubm2}
 Let $\chi(G;x) = x^n + c_1x^{n-1} + \cdots + c_{n-1}x + c_n$ be the characteristic polynomial of graph $G$. Define $k=\lfloor n/2\rfloor$ and
 \begin{equation}\psi(x)= \psi(G;x)=\begin{cases}
  x^k + c_2x^{k-1} +c_4x^{k-2}+\cdots+c_n&\text{if~} n \text{~is even},\\
 x(x^k + c_2x^{k-1} +c_4x^{k-2}+ \cdots+ c_{n-1})&\text{if~} n \text{~is odd.}
 \end{cases}
 \end{equation}
 Then $\psi(A)e\equiv 0\pmod 2$.
\end{lemma}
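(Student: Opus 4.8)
The plan is to reduce the whole statement to the field $\mathbb{F}_2$ and to exploit the fact that, modulo $2$, the characteristic polynomial of a graph is essentially a perfect square. First I would record that for odd $i$ the coefficient $c_i$ equals, up to sign, the sum $\sum_{|S|=i}\det A[S]$ of the principal $i\times i$ minors of $A$, and each such minor is the determinant of a symmetric zero-diagonal integer matrix of odd order; modulo $2$ this is an alternating matrix of odd order, whose determinant vanishes (an alternating bilinear form of odd dimension is degenerate; alternatively, the non-fixed-point-free terms in the Leibniz expansion cancel in pairs $\{\pi,\pi^{-1}\}$). Hence $c_1\equiv c_3\equiv\cdots\equiv 0\pmod 2$, so over $\mathbb{F}_2$ one has $\chi(G;x)\equiv x^n+c_2x^{n-2}+c_4x^{n-4}+\cdots$, which is precisely $\psi(x^2)$ when $n$ is even, and $x\,\psi_0(x^2)$ when $n$ is odd, where $\psi_0$ is the polynomial defined by $\psi(x)=x\,\psi_0(x)$.

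Next I would invoke the Frobenius identity $h(x^2)=h(x)^2$, valid for every $h\in\mathbb{F}_2[x]$. This turns the previous observation into $\chi(G;x)\equiv\psi(x)^2\pmod 2$ when $n$ is even, and $\chi(G;x)\equiv x\,\psi_0(x)^2\pmod 2$ when $n$ is odd, in which case $\psi(x)^2=x^2\psi_0(x)^2\equiv x\,\chi(G;x)\pmod2$. In either parity $\chi(G;x)$ divides $\psi(x)^2$ in $\mathbb{F}_2[x]$, so the Cayley--Hamilton relation $\chi(A)\equiv 0\pmod 2$ gives $\psi(A)^2\equiv 0\pmod 2$ (directly when $n$ is even, and via $\psi(A)^2=A\,\chi(A)\equiv0$ when $n$ is odd).

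The final step is to upgrade $\psi(A)^2\equiv 0$ to $\psi(A)e\equiv 0$; note this is genuinely stronger, since a priori $\psi(A)^2\equiv0$ only says $\psi(A)$ is nilpotent of index at most two. Here I would use the symmetry of $A$: since $\psi(A)$ is a polynomial in the symmetric matrix $A$, the matrix $M:=\psi(A)$ is symmetric over $\mathbb{F}_2$, and for any symmetric $M$ over $\mathbb{F}_2$ one has $(M^2)_{ii}=\sum_j M_{ij}M_{ji}=\sum_j M_{ij}^2=\sum_j M_{ij}=(Me)_i$. Thus $M^2\equiv 0$ forces every diagonal entry of $M^2$, hence every entry of $Me$, to vanish; that is, $\psi(A)e\equiv 0\pmod 2$, which is the assertion.

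I expect the only nontrivial point to be the identity $(M^2)_{ii}=(Me)_i$ for symmetric $M$ over $\mathbb{F}_2$: this is exactly where the graph-theoretic input (the zero diagonal of $A$, which makes $\psi(A)$ symmetric) is converted into a statement about the all-ones vector $e$, and it is what separates the desired conclusion from the weaker ``$\psi(A)$ is nilpotent of index $\le 2$''. Everything else --- the vanishing of the odd coefficients of $\chi(G;x)$ modulo $2$, the Frobenius identity, and Cayley--Hamilton --- is routine.
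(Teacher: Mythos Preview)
The paper does not supply its own proof of this lemma; it merely quotes the result from \cite{qiuarXiv}. So there is nothing in the present paper to compare your argument against.

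That said, your argument is correct and self-contained. The three ingredients --- (i) that every odd-order principal minor of a zero-diagonal symmetric integer matrix is even (so $c_1,c_3,\ldots\equiv 0\pmod 2$ and $\chi(G;x)$ is a square or $x$ times a square over $\mathbb{F}_2$), (ii) Cayley--Hamilton to get $\psi(A)^2\equiv 0\pmod 2$, and (iii) the identity $(M^2)_{ii}=(Me)_i$ over $\mathbb{F}_2$ for symmetric $M$ --- combine exactly as you describe.

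One small expository slip: in your final paragraph you write that ``the zero diagonal of $A$ \ldots\ makes $\psi(A)$ symmetric''. The symmetry of $\psi(A)$ comes simply from the symmetry of $A$; the zero diagonal of $A$ is used earlier, in step (i), to force the odd-indexed coefficients of $\chi(G;x)$ to vanish modulo $2$. The identity $(M^2)_{ii}=\sum_j M_{ij}^2=\sum_j M_{ij}=(Me)_i$ over $\mathbb{F}_2$ requires only that $M$ be symmetric, not that $M$ (or $A$) have zero diagonal. This does not affect the validity of the proof.
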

We emphasize that $\psi(G;x)$ is invariant under cospectrality, and hence is invariant under generalized cospectrality of course. For convenience, we define
\begin{equation}
M_p(G;x)\equiv\begin{cases}
\tilde{m}_p(G;x)\pmod{p}&\text{if~} p \text{~is an odd prime},\\
\psi(G;x)\pmod{2}&\text{if~} p=2.
\end{cases}
\end{equation}
To be specific, we assume all the coefficients of $M_p(G;x)$ belong to $\{0,1,\ldots,p-1\}$.
Now we introduce a  modified walk matrix for a graph $G$ associated with a prime $p$. We first consider the case that $G$ is controllable.
\begin{definition}\normalfont{
	Let $G$ be a controllable graph of order $n$ and $p$ be a prime with $\rank_p W(G)\le n-1$. Then the modified walk matrix of $G$ with respect to $p$, denoted by $W^{(p)}(G)$, is
	\begin{equation}
	\left[e,Ae,\ldots,A^{s-1}e, \frac{M_p(A)e}{p},\frac{AM_p(A)e}{p},\ldots,\frac{A^{n-1-s}M_p(A)e}{p}\right],
	\end{equation} }
where $A=A(G)$, $M_p(x)=M_p(G;x)$ and $s$ is the degree of $M_p(x)$.
\end{definition}
Note that $W^{(p)}(G)$ is integral by Lemmas \ref{ubm} and \ref{ubm2}. We shall show that the last invariant factor of $W^{(p)}(G)$ is no more than that
of $W(G)$.
\begin{lemma}\label{dndec}
	Let $M=[\alpha_1,\alpha_2,\ldots,\alpha_n]$ be a nonsingular integral $n\times n$ matrix. Suppose that $\alpha_k\equiv 0\pmod{p}$ for some prime $p$ and positive integer $k\in\{1,\ldots,n\}$. Let $\tilde{M}=[\alpha_1,\ldots,\alpha_{k-1},\frac{1}{p}\alpha_k,\alpha_{k+1},\ldots,\alpha_n]$. Then $\ord_p(d_n(\tilde{M}))\le \ord_p(d_n({M}))$.
\end{lemma}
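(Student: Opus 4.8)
The plan is to characterise the last invariant factor via the matrix inverse. Recall the classical fact that for a nonsingular integral $n\times n$ matrix $N$, the integer $d_n(N)$ is precisely the smallest positive integer $d$ for which $dN^{-1}$ is an integral matrix; equivalently, $d_n(N)=\ell(N^{-1})$. Indeed, writing $N^{-1}=(\det N)^{-1}\,\mathrm{adj}(N)$, the entries of $\mathrm{adj}(N)$ are, up to sign, the $(n-1)\times(n-1)$ minors of $N$, whose greatest common divisor equals $d_1(N)\cdots d_{n-1}(N)$, while $|\det N|=d_1(N)\cdots d_n(N)$; hence $dN^{-1}$ is integral if and only if $d_n(N)\mid d$. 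This reduces the lemma to an elementary manipulation of inverses.

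First I would observe that $\tilde{M}=MD$, where $D$ is the diagonal matrix equal to the identity except for a $\tfrac1p$ in the $(k,k)$ position, since scaling the $k$-th column of $M$ by $\tfrac1p$ is exactly right multiplication by $D$. Consequently $\tilde{M}$ is nonsingular (as $\det\tilde{M}=\tfrac1p\det M\neq 0$), and $\tilde{M}^{-1}=D^{-1}M^{-1}$ with $D^{-1}=\mathrm{diag}(1,\dots,1,p,1,\dots,1)$ (the $p$ in position $k$). Left multiplication by $D^{-1}$ simply multiplies the $k$-th row of $M^{-1}$ by the integer $p$ and leaves the other rows unchanged.

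Now set $d=d_n(M)$. By the first paragraph $dM^{-1}$ is integral, and multiplying a single row of an integral matrix by the integer $p$ preserves integrality, so $d\tilde{M}^{-1}=D^{-1}(dM^{-1})$ is integral as well. Applying the characterisation once more, this time to $\tilde{M}$, yields $d_n(\tilde{M})\mid d_n(M)$, and in particular $\ord_p(d_n(\tilde{M}))\le \ord_p(d_n(M))$, which is the claimed inequality (in fact slightly more).

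I do not foresee a genuine obstacle: the only step needing care is the justification of the inverse characterisation of $d_n(N)$, which is standard. If a fully self-contained route is preferred, one can instead compare the $(n-1)\times(n-1)$ minors of $\tilde{M}$ and $M$ directly — each such minor of $\tilde{M}$ either coincides with the corresponding minor of $M$ (when it avoids column $k$) or is $\tfrac1p$ times it (when it uses column $k$, in which case $p$ divides that minor because the whole $k$-th column of $M$ is divisible by $p$) — and then track $\ord_p$ through the identity $d_n=|\det|/\gcd\{(n-1)\text{-minors}\}$ via the case analysis on $\min(\ord_p g_1,\ord_p g_2)$ versus $\min(\ord_p g_1,\ord_p g_2-1)$, where $g_1,g_2$ are the gcd's of the minors avoiding, respectively using, column $k$. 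The inverse-matrix argument, however, is shorter and gives the divisibility $d_n(\tilde{M})\mid d_n(M)$ for every prime at once.
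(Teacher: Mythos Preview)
Your argument is correct and in fact yields the stronger divisibility $d_n(\tilde{M})\mid d_n(M)$, valid at every prime simultaneously. The paper takes a different, more elementary route: it compares the $(n-1)$-th determinantal divisors $\Delta_{n-1}$ and $\tilde{\Delta}_{n-1}$ directly (each $(n-1)$-minor of $\tilde{M}$ equals the corresponding minor of $M$ or $\tfrac1p$ times it), together with $\tilde{\Delta}_n=\tfrac1p\Delta_n$, and concludes that $d_n(\tilde{M})=\tilde{\Delta}_n/\tilde{\Delta}_{n-1}$ is exactly one of $d_n(M)$ or $\tfrac1p\,d_n(M)$. So the paper's approach is the ``self-contained'' minor-based alternative you sketch at the end, and it actually pins down $d_n(\tilde{M})$ to two possible values rather than merely a divisor of $d_n(M)$; your inverse-based route is shorter and global but relies on the (standard) identification $d_n(N)=\ell(N^{-1})$. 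Either proof suffices for the lemma as stated.
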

\begin{proof}
	 Let $\Delta_i$   and  $\tilde{\Delta}_i$ denote the $i$-th determinantal divisor of $M$ and $\tilde{M}$ respectively.  Let $D_{n-1}$ be any $(n-1)$-th minor of $M$ and $\tilde{D}_{n-1}$ be the corresponding minor of $\tilde{M}$. It is easy to see that  $\tilde{D}_{n-1}$ equals $D_{n-1}$ or $\frac{1}{p}D_{n-1}$. This means $\tilde{\Delta}_{n-1}$ equals  $\Delta_{n-1}$ or $\frac{1}{p}\Delta_{n-1}$. On the other hand, we have   $\tilde{\Delta}_n=|\det \tilde{M}|=\frac{1}{p}|\det M|=\frac{1}{p}\Delta_n$. It follows that
	  $d_n(\tilde{M})$ equals $d_n(M)$ or $\frac{1}{p}d_n(M)$. Thus, $\ord_p(d_n(\tilde{M}))\le \ord_p(d_n({M}))$ as desired.
\end{proof}
\begin{lemma}
	Let $G$ be controllable and $p$ be a prime with $\rank_p(W(G))\le n-1$. Then we have $$\ord_p(d_n(W^{(p)}(G)))\le \ord_p(d_n(W(G))).$$
\end{lemma}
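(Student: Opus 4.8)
The plan is to realize $W^{(p)}(G)$ as the result of dividing $n-s$ columns by $p$ inside a matrix that is unimodularly equivalent to $W(G)$, and then to invoke Lemma \ref{dndec} once for each of those columns. Here $s=\deg M_p(G;x)$, and the hypothesis $\rank_p W(G)\le n-1$ guarantees $s\le n-1$, so that there is at least one modified column and $W^{(p)}(G)$ is defined.

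First I would introduce the auxiliary integral matrix $$\widehat M=\bigl[e,\,Ae,\,\ldots,\,A^{s-1}e,\ M_p(A)e,\ AM_p(A)e,\ \ldots,\ A^{n-1-s}M_p(A)e\bigr],$$ so that $W^{(p)}(G)$ is obtained from $\widehat M$ by dividing each of its last $n-s$ columns by $p$. Since $M_p(x)=M_p(G;x)$ is monic of degree $s$, for $0\le k\le n-1-s$ the column $A^kM_p(A)e$ equals $A^{s+k}e$ plus a fixed $\mathbb Z$-linear combination of $e,Ae,\ldots,A^{s+k-1}e$, and every exponent appearing here is at most $n-1$. Hence $\widehat M=W(G)\,T$ for a unit upper-triangular integer matrix $T$ (its first $s$ columns being those of the identity, and its $(s+1+k)$-th column recording the above expansion of $A^kM_p(A)e$, with diagonal entry $1$). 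In particular $T$ is unimodular, so $\widehat M$ and $W(G)$ have the same Smith normal form; since $G$ is controllable, $W(G)$ and hence $\widehat M$ are nonsingular, and $\ord_p(d_n(\widehat M))=\ord_p(d_n(W(G)))$.

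Next, because $W^{(p)}(G)$ is integral (Lemmas \ref{ubm} and \ref{ubm2}), each column $A^kM_p(A)e$ of $\widehat M$ with $0\le k\le n-1-s$ is $\equiv 0\pmod p$. I would then pass from $\widehat M$ to $W^{(p)}(G)$ in $n-s$ steps, dividing one such column by $p$ at a time: set $\widehat M_0=\widehat M$ and let $\widehat M_i$ be $\widehat M_{i-1}$ with its $(s+i)$-th column divided by $p$. Each $\widehat M_{i-1}$ is a nonsingular integral matrix — dividing a column of a nonsingular matrix by a nonzero scalar preserves nonsingularity — and its $(s+i)$-th column is still $A^{i-1}M_p(A)e$, untouched by the earlier divisions (which affected only columns $s+1,\ldots,s+i-1$), hence $\equiv 0\pmod p$. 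Lemma \ref{dndec} therefore gives $\ord_p(d_n(\widehat M_i))\le\ord_p(d_n(\widehat M_{i-1}))$ for $1\le i\le n-s$. Chaining these inequalities and noting $\widehat M_{n-s}=W^{(p)}(G)$ yields $\ord_p(d_n(W^{(p)}(G)))\le\ord_p(d_n(\widehat M))=\ord_p(d_n(W(G)))$, as asserted.

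The only thing demanding care is the bookkeeping in the last paragraph: one must confirm that dividing some columns by $p$ does not spoil the $p$-divisibility of the columns not yet divided (true because the modified columns are pairwise distinct positions and each is individually divisible by $p$), and that every intermediate matrix remains a nonsingular integral matrix so that Lemma \ref{dndec} applies. Both points are immediate, so I do not expect any genuine obstacle; the content of the lemma is entirely carried by Lemma \ref{dndec} together with the unimodular identity $\widehat M=W(G)T$.
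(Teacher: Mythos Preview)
Your argument is correct and follows essentially the same route as the paper: introduce the auxiliary matrix $\widehat M$ (the paper's $\hat W$), observe it equals $W(G)$ times a unit upper-triangular integer matrix so they share Smith normal form, then divide the last $n-s$ columns by $p$ one at a time while invoking Lemma~\ref{dndec} iteratively. You spell out the intermediate bookkeeping more carefully than the paper does, but the content is identical.
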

\begin{proof}
	Let $W=W(G)$ and $\hat{W}=[e,Ae,\ldots,A^{s-1}e,M_p(A)e,AM_p(A)e,\ldots,A^{n-1-s}M_p(A)e]$. We note that $\hat{W}$ and $W$ have the same Smith normal form since $\tilde{W}=WU$ for some  unit upper triangular  (and hence unimodular) matrix $U$. This means $d_n(\hat{W})=d_n(W)$. Using Lemma \ref{dndec} iteratively, we obtain $\ord_p(d_n(W^{(p)}(G)))\le \ord_p(d_n(\hat{W}))$,  which completes the proof.
\end{proof}
For controllable graphs, Lemma \ref{lqc} can be improved as follows using the modified walk matrix. An initial version of this result first appeared in \cite{qiuarXiv}.
\begin{lemma}\label{ordimp}
	If $G$ be a controllable graph, then $\ord_p(\ell(G))\le \ord_p(d_n(W^{(p)}(G)))$ for any prime $p\mid \det W(G)$.
\end{lemma}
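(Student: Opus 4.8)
The plan is to reduce the claim to a bound on the level of a single $Q\in\mathcal{Q}(G)$ and then rerun the Smith normal form argument of Lemma~\ref{lqc} with $W(G)$ replaced by the modified walk matrix $W^{(p)}(G)$. Since $\ell(G)=\lcm\{\ell(Q):Q\in\mathcal{Q}(G)\}$ and this set of levels is finite (each $\ell(Q)$ divides $d_n(W(G))$ by Lemma~\ref{lqc}), we have $\ord_p(\ell(G))=\max_{Q\in\mathcal{Q}(G)}\ord_p(\ell(Q))$, so it suffices to prove $\ord_p(\ell(Q))\le\ord_p(d_n(W^{(p)}(G)))$ for each fixed $Q$; in fact the argument will give the slightly stronger divisibility $\ell(Q)\mid d_n(W^{(p)}(G))$. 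Fix $Q\in\mathcal{Q}(G)$ and let $H$ be the graph with $A(H)=Q^\T A(G)Q$. From $W(H)=Q^\T W(G)$ we get $\rank W(H)=n$, so $H$ is again controllable, and $\det W(H)=\pm\det W(G)$, so $p\mid\det W(H)$; hence $W^{(p)}(H)$ is well defined and integral by Lemmas~\ref{ubm} and~\ref{ubm2}.

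The heart of the proof is the identity $W^{(p)}(H)=Q^\T W^{(p)}(G)$. Three ingredients enter. First, $Q$ is regular, so $Qe=Q^\T e=e$. Second, since $M_p(x)$ is a fixed integral polynomial and $Q$ is orthogonal, conjugation by $Q$ commutes with polynomial evaluation, so $A(H)^jM_p(A(H))=Q^\T A(G)^jM_p(A(G))Q$ for every $j\ge 0$ (and likewise with $M_p$ omitted). Third, and crucially, $M_p$ is constant on the generalized cospectrality class of $G$ --- for odd $p$ this is the invariance of $\tilde m_p$ recorded in Eq.~\eqref{mpequ}, and for $p=2$ it is the invariance of $\psi$ --- so that $M_p(H;x)=M_p(G;x)$ and the \emph{same} polynomial $M_p(x)$, of the same degree $s$, is used to build both $W^{(p)}(G)$ and $W^{(p)}(H)$. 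Reading off columns then gives $A(H)^je=Q^\T A(G)^je$ for $0\le j\le s-1$ and $\frac1p A(H)^iM_p(A(H))e=Q^\T\!\left(\frac1p A(G)^iM_p(A(G))e\right)$ for $0\le i\le n-1-s$, which is precisely $W^{(p)}(H)=Q^\T W^{(p)}(G)$.

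To finish, note that $W^{(p)}(G)$ is nonsingular (its determinant equals $\pm\det W(G)/p^{n-s}\neq 0$, as in the preceding lemma), so we may invert it and obtain $Q^\T=W^{(p)}(H)\,(W^{(p)}(G))^{-1}$. Choosing unimodular $U,V$ with $U W^{(p)}(G)V=\diag[d_1,\ldots,d_n]$ the Smith normal form of $W^{(p)}(G)$, where $d_n=d_n(W^{(p)}(G))$, we get
$$d_n Q^\T=W^{(p)}(H)\,V\,\diag\!\left[\frac{d_n}{d_1},\ldots,\frac{d_n}{d_{n-1}},1\right]U,$$
which is an integral matrix because $d_i\mid d_n$ and $W^{(p)}(H)$ is integral. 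Hence $\ell(Q)\mid d_n(W^{(p)}(G))$, and taking the maximum of $\ord_p$ over all $Q\in\mathcal{Q}(G)$ yields the lemma.

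The routine steps here --- reading columns and manipulating the Smith normal form --- just copy the proof of Lemma~\ref{lqc}, so the only substantive point is the identity $W^{(p)}(H)=Q^\T W^{(p)}(G)$, and its single nontrivial input, the invariance of $M_p$ under generalized cospectrality, is a quoted result (\cite{wangzhu} for odd $p$, \cite{qiuarXiv} for $p=2$). Thus the main obstacle is the bookkeeping that makes this identity legitimate: verifying that $H$ is again controllable with $p\mid\det W(H)$, so that $W^{(p)}(H)$ is genuinely defined and integral, and that it is assembled from exactly the same polynomial $M_p(x)$ as $W^{(p)}(G)$.
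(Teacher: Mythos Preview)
Your proof is correct and follows essentially the same route as the paper: establish the key identity $Q^\T W^{(p)}(G)=W^{(p)}(H)$ from the invariance of $M_p$ under generalized cospectrality, and then rerun the Smith normal form argument of Lemma~\ref{lqc}. The paper's proof is terser---it simply notes that $W^{(p)}(G)$ and $W^{(p)}(H)$ arise from $W(G)$ and $W(H)$ by the same column operations---while you spell out the bookkeeping (controllability of $H$, $p\mid\det W(H)$, integrality of $W^{(p)}(H)$) more explicitly, but the substance is identical.
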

\begin{proof}
	Let $Q\in \mathcal{Q}(G)$ and $H$ be the graph with adjacency matrix $A(H)=Q^\T A(G)Q$. Then $Q^\T W(G)=W(H)$. As $M_p(G,x)$ is invariant under generalized cospectrality, we easily find that the two modified walk matrices $W^{(p)}(G)$ and $W^{(p)}(H)$ are obtained from their original walk matrices by the same collum operations. Thus, from the equality $Q^\T W(G)=W(H)$ we obtain $Q^\T W^{(p)}(G)=W^{(p)}(H)$. Consequently, $\ell(Q)\mid d_n(W^{(p)}(G))$ by the same argument of Lemma \ref{lqc}.
\end{proof}

\begin{remark}\normalfont{
	In \cite{wang2017JCTB}, the author proved that if $2^{-\lfloor\frac{n}{2}\rfloor}\det W(G)$ is odd and square-free then $G$ is DGS. It is not hard to show that under the same condition $W^{(p)}(G)$ always has full rank for any prime factor $p$ of $\det W$. Consequently, $\ord_p(\ell(G))\le \ord_p(d_n^{(p)})=0$ for any prime $p$, that is, $\ell(G)=1$. Thus,  the result in \cite{wang2017JCTB} can be recovered by Lemma \ref{ordimp}. }
\end{remark}
		Consider the same graph as in Example 1. Direct calculation (using Mathematica) shows that the Smith normal forms of $W^{(2)}(G) $ and $W^{3}(G)$ are
		\begin{equation}
		\diag[\underbrace{1,\ldots,1}_{11},4,483749001324]\quad\text{and}\quad\diag[\underbrace{1,\ldots,1}_{7},\underbrace{2,\ldots,2}_{4},8,322499334216],
		\end{equation}
		respectively. By Lemma  \ref{ordimp}, we obtain
		\begin{equation}
		\ord_2(\ell(G))\le \ord_2(483749001324)=2 \quad \text{and}\quad	\ord_3(\ell(G))\le \ord_3(322499334216)=1.
		\end{equation} Thus, $\ell(G)$ divides 12, improving the previous result that $\ell(G)$ divides 72.

\begin{definition}\label{mwa}\normalfont{
		Let $G$ be an almost controllable graph of order $n$ and $p$ be a prime with $\rank_p W_0(G)\le n-1$. Then the modified walk matrix of $G$ with respect to $p$, denoted by $W_\delta^{(p)}(G)$, is
		\begin{equation}\label{wdp}
		\left[e,Ae,\ldots,A^{s-1}e, \frac{M_p(A)e}{p},\frac{AM_p(A)e}{p},\ldots,\frac{A^{n-2-s}M_p(A)e}{p},(-1)^\delta\frac{1}{\lcm(2^{\lfloor\frac{n}{2}\rfloor-1},p^{n-1-s})}\xi(G)\right] ,
		\end{equation} }
	where $A=A(G)$, $M_p(x)=M_p(G;x)$ and $s$ is the degree of $M_p(x)$.
\end{definition}

In Definition \ref{mwa}, we make the convention that  $W_\delta^{(p)}(G)=W_\delta(G)$ if the degree of $M_p(x)$ is $n-1$. We remark that $\deg M_2(x)=\lceil\frac{n}{2}\rceil$ and hence
\begin{equation}
\lcm(2^{\lfloor\frac{n}{2}\rfloor-1},p^{n-1-s})=
\begin{cases}
 2^{\lfloor\frac{n}{2}\rfloor-1}& \text{if~} p=2,\\
 	2^{\lfloor\frac{n}{2}\rfloor-1}p^{n-1-s}& \text{otherwise}.
\end{cases}
\end{equation}
Recall that the $i$-th entry of $\xi(G)$ is the algebraic cofactor of the $(i,n)$-entry for $W(G)$. Since $M_p(x)$ is a monic polynomial of degree $s$, every such algebraic cofactor for $W(G)$ equals the corresponding algebraic cofactor for
$$W'=[e,Ae,\ldots,A^{s-1}e,M_p(A)e,AM_p(A)e,\ldots,A^{n-2-s}M_p(A)e,A^{n-1}e].$$
Since all entries in the $n-1-s$ columns $ M_p(A)e$, $AM_p(A)e$, $\ldots$, $A^{n-2-s}M_p(A)e$ are multiple of $p$, we find that all algebraic cofactors for entries in the last column of $W'$ are multiple of $p^{n-1-s}$. This verifies that each entry of the last column in Eq. \eqref{wdp} is indeed an integer.

Lemma \ref{lqa} can be improved in the same way.
\begin{lemma}\label{ordimpa}
	If $G$ is almost controllable then $\ord_p(\ell(G))\le \ord_p(d_n(W_0^{(p)}(G)))$ for any prime $p\mid \det W_0(G)$.
\end{lemma}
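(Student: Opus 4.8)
The plan is to run the argument of Lemma \ref{ordimp} in the almost controllable setting, replacing $W$ and $W^{(p)}$ by the matrices $W_\delta$ and $W_\delta^{(p)}$ and using the two-solution structure already exploited in Lemma \ref{lqa}. Fix $Q\in\mathcal{Q}(G)$ and let $H$ be the graph with $A(H)=Q^\T A(G)Q$. As in the proof of Lemma \ref{lqa}, $W(H)=Q^\T W(G)$, so $H$ is again almost controllable and $\rank_p W_0(H)=\rank_p W_0(G)\le n-1$; moreover $M_p(G;x)=M_p(H;x)$ (for odd $p$ this is the generalized-cospectral invariant $\tilde m_p$, and for $p=2$ it is $\psi$, invariant under cospectrality), so the modified walk matrices $W_\delta^{(p)}(G)$ and $W_\delta^{(p)}(H)$ are built with the same degree $s=\deg M_p$. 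The whole proof then reduces to the claim
\[
Q^\T W_0^{(p)}(G)=W_{\delta'}^{(p)}(H)\quad\text{for some }\delta'\in\{0,1\}.
\]
Granting this, let $U,V$ be unimodular with $UW_0^{(p)}(G)V=\diag[d_1,\dots,d_n]$ the Smith normal form; since $d_i\mid d_n$, the identity $d_nQ^\T=W_{\delta'}^{(p)}(H)\,V\diag[d_n/d_1,\dots,d_n/d_{n-1},1]\,U$ shows $d_nQ^\T$ is integral, hence $\ell(Q)\mid d_n(W_0^{(p)}(G))$; taking the least common multiple over all $Q\in\mathcal{Q}(G)$ gives $\ell(G)\mid d_n(W_0^{(p)}(G))$, which in particular yields the claimed bound on $\ord_p$.

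It remains to verify the identification column by column. For the first $n-1$ columns this is exactly the computation in Lemma \ref{ordimp}: from $Qe=e$ we get $Q^\T e=e$, from $Q^\T A(G)^jQ=A(H)^j$ we get $Q^\T A(G)^je=A(H)^je$, and using $M_p(G)=M_p(H)$ we get $Q^\T A(G)^jM_p(A(G))e=A(H)^jM_p(A(H))e$; dividing the appropriate columns by $p$ commutes with left multiplication by $Q^\T$. The only genuinely new point is the last column, carrying $(-1)^\delta\frac{1}{\lcm(2^{\lfloor n/2\rfloor-1},p^{n-1-s})}\xi(G)$. Here I would use that $\ker W(G)^\T$ is one-dimensional, spanned by $\xi(G)$ (it is nonzero because $W_0(G)$ is nonsingular, a fact underlying the formula $Q^\T=W_0(H)(W_0(G))^{-1}$ from \cite{wang2021EUJC}); since $W(H)^\T=W(G)^\T Q$, the line $\ker W(H)^\T$ is spanned by $Q^\T\xi(G)$. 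Comparing this with the fact, recorded in the proof of Lemma \ref{lqa}, that $W_0(H)\in\{Q^\T W_0(G),Q^\T W_1(G)\}$, and reading off last columns, gives $\xi(H)=\pm Q^\T\xi(G)$. Because the normalizing constant $\lcm(2^{\lfloor n/2\rfloor-1},p^{n-1-s})$ depends only on $n$, $p$ and $s$ — all invariant under generalized cospectrality — the last column of $Q^\T W_0^{(p)}(G)$ is $\pm$ the last column of $W_0^{(p)}(H)$, i.e. the last column of $W_{\delta'}^{(p)}(H)$ for the appropriate sign $\delta'$. This completes the identification.

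Two loose ends should be dispatched: one needs $W_0^{(p)}(G)$ to be integral and nonsingular. Integrality holds by the discussion following Definition \ref{mwa} (using Lemmas \ref{ubm} and \ref{ubm2} for the middle columns); nonsingularity holds because $W_0^{(p)}(G)=W_0(G)M$ for an invertible rational matrix $M$ (unit upper-triangular integer column operations on the first $n-1$ columns, followed by rescaling $n-1-s$ columns and the last column), and $W_0(G)$ is nonsingular. The boundary case $s=n-1$ is handled by the convention $W_\delta^{(p)}(G)=W_\delta(G)$, where the statement is literally Lemma \ref{lqa}. I expect the main obstacle to be nothing conceptual but rather the sign and normalization bookkeeping in the last column — ensuring that $Q^\T$ sends $W_0^{(p)}(G)$ onto $W_0^{(p)}(H)$ or $W_1^{(p)}(H)$ exactly, not onto some other rational rescaling — which is controlled entirely by the structural results of \cite{wang2021EUJC} already invoked for Lemma \ref{lqa}.
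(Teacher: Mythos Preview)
Your proposal is correct and follows exactly the route the paper has in mind: the paper gives no explicit proof of Lemma \ref{ordimpa}, merely stating that ``Lemma \ref{lqa} can be improved in the same way,'' and your argument is precisely the natural amalgam of the proofs of Lemmas \ref{ordimp} and \ref{lqa} --- transporting the first $n-1$ columns via the invariance of $M_p$, handling the last column via the sign ambiguity $\xi(H)=\pm Q^\T\xi(G)$ inherited from \cite{wang2021EUJC}, and finishing with the Smith normal form step. Your treatment is more detailed than the paper's (in particular the explicit verification that the normalizing constant in the last column is a generalized-cospectral invariant, and the nonsingularity check for $W_0^{(p)}(G)$), but the underlying strategy is identical.
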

Using Lemmas \ref{ordimp} and \ref{ordimpa}, we obtain the following improvement of Proposition \ref{lG1}, which we shall refer to as improved estimate of $\ell(G)$.
\begin{proposition}\label{lG2}
	Let $G$ be controllable or almost controllable. Write
		\begin{equation}
	d=\begin{cases}
	d_n(W(G))&\text{if~} G\text{~is controllable,}\\
	d_n(W_0(G))&\text{if~} G\text{~is almost controllable.}
	\end{cases}
	\end{equation}
	and
	\begin{equation}
	d^{(p)}=\begin{cases}
	d_n(W^{(p)}(G))&\text{if~} G\text{~is controllable,}\\
	d_n(W_0^{(p)}(G))&\text{if~} G\text{~is almost controllable.}
	\end{cases}
	\end{equation}
	Then we have\\	
	\textup{(\rmnum{1})} $\ord_2(\ell(G))\le \ord_2(d^{(2)})$;\\	
	\textup{(\rmnum{2})} for odd prime $p$, $\ord_p(\ell(G))\le \ord_p(d^{(p)})$ if $p\mid\gcd(\Delta(G),d)$ and $p^2\mid\Delta(G)$; otherwise $\ord_p(\ell(G))=0$.
\end{proposition}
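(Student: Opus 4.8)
The plan is to reassemble Proposition~\ref{lG2} out of results already in hand, exactly as Proposition~\ref{lG1} was assembled, but now feeding in the sharper level bounds of Lemmas~\ref{ordimp} and~\ref{ordimpa} in place of Lemmas~\ref{lqc} and~\ref{lqa} wherever the modified walk matrices are defined. I would open with two bookkeeping remarks. First, $\mathcal{Q}(G)$ is finite: by Theorem~\ref{connection} each graph generalized cospectral with $G$ accounts for exactly one (controllable case) or at most two (almost controllable case) elements of $\mathcal{Q}(G)$, and there are only finitely many graphs on $n$ vertices; hence $\ell(G)$ is a well-defined positive integer and $\ord_p(\ell(G))=\max_{Q\in\mathcal{Q}(G)}\ord_p(\ell(Q))$ for every prime $p$, so it is enough to bound $\ord_p(\ell(Q))$ for an arbitrary $Q$. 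Second, for a prime $p$ one has $p\mid d$ if and only if the modified walk matrix of $G$ with respect to $p$ is defined: indeed $d$ is the last invariant factor of a nonsingular integral matrix ($W(G)$ in the controllable case, $W_0(G)$ in the almost controllable case), so $p\mid d$ is equivalent to $p$ dividing that matrix's determinant, which in turn is equivalent to its $p$-rank being at most $n-1$.

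For part~(i), I would fix $p=2$ and split on whether $2\mid d$. If $2\mid d$, the relevant modified walk matrix is defined and $2$ divides $\det W(G)$ (resp. $\det W_0(G)$), so Lemma~\ref{ordimp} (resp. Lemma~\ref{ordimpa}) gives $\ord_2(\ell(Q))\le\ord_2(d_n(W^{(2)}(G)))=\ord_2(d^{(2)})$. If $2\nmid d$, then Lemma~\ref{lqc} (resp. Lemma~\ref{lqa}) gives $\ell(Q)\mid d$, whence $\ord_2(\ell(Q))=0$; adopting the natural convention $W^{(2)}(G):=W(G)$ (resp. $W_0^{(2)}(G):=W_0(G)$) when the $2$-rank is full, this reads $\ord_2(\ell(Q))\le\ord_2(d^{(2)})=0$. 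Either way, taking the maximum over $Q\in\mathcal{Q}(G)$ yields $\ord_2(\ell(G))\le\ord_2(d^{(2)})$.

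For part~(ii), I would fix an odd prime $p$. The ``otherwise'' case is the negation of ``$p\mid\gcd(\Delta(G),d)$ and $p^2\mid\Delta(G)$'', which simplifies to ``$p^2\nmid\Delta(G)$ or $p\nmid d$''. If $p^2\nmid\Delta(G)$, then Lemma~\ref{lqd} forces every $\ell(Q)$ to be coprime to $p$, so $\ord_p(\ell(G))=0$. If instead $p\nmid d$, then Lemma~\ref{lqc} (resp. Lemma~\ref{lqa}) gives $\ell(Q)\mid d$ and again $\ord_p(\ell(G))=0$. In the complementary case $p\mid\gcd(\Delta(G),d)$ and $p^2\mid\Delta(G)$ we have $p\mid d$, so by the second remark the modified walk matrix with respect to $p$ is defined and $p\mid\det W(G)$ (resp. $p\mid\det W_0(G)$); Lemma~\ref{ordimp} (resp. Lemma~\ref{ordimpa}) then gives $\ord_p(\ell(Q))\le\ord_p(d_n(W^{(p)}(G)))=\ord_p(d^{(p)})$ for every $Q\in\mathcal{Q}(G)$, and the maximum over $Q$ completes the proof.

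The whole argument is routine bookkeeping that recombines already-established lemmas; I expect the only point requiring a moment's thought to be the boundary case in which the modified walk matrix fails to be defined (equivalently $p\nmid d$), which is handled simply by falling back on the coarser divisibility $\ell(Q)\mid d$ of Lemmas~\ref{lqc}--\ref{lqa}, together with the observation that $p\mid d$ is exactly the condition guaranteeing that $W^{(p)}(G)$ (resp. $W_0^{(p)}(G)$) exists. No genuine obstacle is anticipated.
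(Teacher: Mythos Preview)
Your proposal is correct and follows exactly the route the paper intends: the paper states Proposition~\ref{lG2} as the straightforward improvement of Proposition~\ref{lG1} obtained by substituting Lemmas~\ref{ordimp} and~\ref{ordimpa} for Lemmas~\ref{lqc} and~\ref{lqa}, together with Lemma~\ref{lqd}, and offers no further argument. Your write-up simply makes this recombination explicit, including the harmless boundary case $p\nmid d$ (which in fact never occurs for $p=2$, since $\psi(A)e\equiv 0\pmod 2$ forces $\rank_2 W(G)<n$), so nothing is missing.
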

\section{Finding all generalized cospectral mates}\label{criterion}
We first deal with controllable graphs. Given a controllable graph $G$, a prime $p$ is \emph{eliminable} if we can guarantee $\ord_p(\ell(G))=0$ using the improved estimate of $\ell(G)$, and is \emph{potential} otherwise.  Throughout this section, we use the following notations:
\begin{equation}
t_i=\ord_{p_i}(d_n(W^{(p_i)}(G))) ~for~ i\in\{1,2,\ldots,s\}, ~and~L=\prod_{i=1}^s p_i^{t_i},
\end{equation}
where $p_1,p_2,\ldots,p_s$ are all potential primes associated with $G$. If there are no potential primes for $G$, then $s=0$ and $L=1$.
\subsection{Generalized cospectral mates and maximum cliques}
\begin{lemma}\label{nontrsol}
Let $G$ be a controllable graph. If $G$ has a generalized cospectral mate, then $s>0$ and the following system of equations
\begin{equation}\label{sys1}
\left\{
\begin{aligned}
&(W^{(p_i)}(G))^\T x\equiv 0\pmod {p^{t_i}_i} \text{~for~} i=1,2,\ldots,s,\\
&x^\T x=L^2,&\\
&e^\T x=L,&\\
&x^\T A(G)x=0&
\end{aligned}
\right.
\end{equation}
has an integral solution $x\not\equiv 0\pmod{L}$.
\end{lemma}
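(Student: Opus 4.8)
The plan is to produce the required vector $x$ directly from the regular orthogonal matrix attached to a hypothetical mate. So assume $H$ is a generalized cospectral mate of $G$, and let $Q$ be the unique rational matrix in $\mathcal{Q}(G)$ supplied by Theorem \ref{connection}(\rmnum{1}), so that $Q^\T A(G)Q=A(H)$ and $Q^\T=W(H)W(G)^{-1}$. The first point I would record is that $Q$ is not a permutation matrix: if it were, $Q^\T A(G)Q=A(H)$ would be an isomorphism $G\cong H$, contrary to assumption. Since a regular orthogonal matrix of level $1$ is integral, hence a signed permutation matrix whose signs are forced to be $+1$ by $Qe=e$, it follows that $\ell(Q)>1$.

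Next I would derive $s>0$ from $\ell(Q)\mid L$. By definition $\ell(Q)\mid\ell(G)$, and $\ell(G)\mid L$: for an eliminable prime $p$ we have $\ord_p(\ell(G))=0=\ord_p(L)$ by the definition of eliminability, while for a potential prime $p_i$ the improved estimate (Proposition \ref{lG2}) gives $\ord_{p_i}(\ell(G))\le\ord_{p_i}(d^{(p_i)})=t_i=\ord_{p_i}(L)$. Thus $1<\ell(Q)\mid L$, which is impossible when $L=1$, so $s>0$.

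For the solution itself I would take $x$ to be a column of $LQ$. Since $\ell(Q)\mid L$ the matrix $LQ$ is integral, so $x=LQe_j$ is an integer vector for each $j$; the index $j$ is fixed at the end. The three polynomial conditions of \eqref{sys1} follow at once from $Q$ being regular orthogonal together with $Q^\T A(G)Q=A(H)$: $x^\T x=L^2e_j^\T Q^\T Qe_j=L^2$; $e^\T x=L(Q^\T e)^\T e_j=L$, using $Q^\T e=e$ (which comes from $Qe=e$ and $Q^\T Q=I$); and $x^\T A(G)x=L^2e_j^\T A(H)e_j=0$ because $A(H)$ has zero diagonal. For the congruences I would use the transformation law $Q^\T W^{(p_i)}(G)=W^{(p_i)}(H)$ established inside the proof of Lemma \ref{ordimp} (the two modified walk matrices being produced from $W(G)$ and $W(H)$ by the same column operations, as $M_{p_i}(G;x)=M_{p_i}(H;x)$ by invariance under generalized cospectrality); transposing and multiplying on the right by $Le_j$ yields $(W^{(p_i)}(G))^\T x=L\,(W^{(p_i)}(H))^\T e_j$, an integer vector scaled by $L$, hence $\equiv 0\pmod{p_i^{t_i}}$ since $p_i^{t_i}\mid L$.

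Finally I would choose $j$ with $x\not\equiv 0\pmod L$: if $LQe_j\equiv 0\pmod L$ held for every $j$, then $Q$ would be integral, forcing $\ell(Q)=1$ and contradicting $\ell(Q)>1$, so some column works. I do not expect a serious obstacle here; the two places calling for a little care are (i) the claim $\ell(Q)>1$, which is where the hypothesis $H\not\cong G$ enters (via the fact that the level-$1$ elements of $\RO_n$ are exactly the permutation matrices), and (ii) this last step, which is what guarantees that the solution produced is \emph{nontrivial} ($x\not\equiv 0\pmod L$, not merely a scaled standard basis vector). Everything else — the polynomial constraints, the refined congruences, and the inclusion $\ell(G)\mid L$ — follows mechanically from orthogonality of $Q$, the transformation law of Lemma \ref{ordimp}, and the earlier level estimates (Lemmas \ref{lqc}, \ref{ordimp} and Proposition \ref{lG2}).
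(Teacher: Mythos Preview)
Your proposal is correct and follows essentially the same route as the paper: pick a non-permutation $Q\in\mathcal{Q}(G)$, use Proposition~\ref{lG2} to get $\ell(Q)\mid L$ (whence $s>0$), set $x$ to be a column of $LQ$ coming from a non-integer column of $Q$, and read off the four conditions from regularity/orthogonality of $Q$, the vanishing diagonal of $A(H)$, and the transformation law $Q^\T W^{(p_i)}(G)=W^{(p_i)}(H)$ from Lemma~\ref{ordimp}. The only cosmetic difference is that the paper selects at the outset a column $\xi$ of $Q$ with a non-integer entry and sets $x=L\xi$, whereas you scale first and then argue by contradiction that some $j$ gives $LQe_j\not\equiv 0\pmod L$; these are equivalent.
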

\begin{remark}\normalfont{
	Let $e^{(k)}$ denote the $k$-th unit coordinate vector for $k=1,\ldots,n$. Then  $Le^{(k)}$'s are always solutions of Sys. \eqref{sys1}. These solutions are called \emph{trivial}, whereas any other solutions, that is, $x\not\equiv 0\pmod{L}$, are called  \emph{nontrivial}. }
\end{remark}
\begin{remark}
	Sys. \eqref{sys1} becomes trivial when $s=0$, i.e., $L=1$. In this case, the solution set of Sys. \eqref{sys1} is exactly $\{e^{(1)},\ldots,e^{(n)}\}$.
\end{remark}
\begin{proof}
	As $G$ has a generalized cospectral mate, $\mathcal{Q}(G)$ contains a matrix that is not a permutation matrix. Let $Q$ be such a matrix and $\ell$ be its level. Then, $\ell\neq 1$. Let $\xi$ be a collum of $Q$ which contains at least a non-integer entry. Write $x=L\xi$.  By Proposition \ref{lG2}, we see that $\ell\mid L$, which implies that $L>1$ and hence $s\ge 1$. Furthermore, as $\ell \xi$ is integral and $\xi$ contains some non-integer entry, we find that $x$ is  an integral vector and $x\not\equiv 0\pmod{L}$.
	
	Since $Q^\T Q=I$ and $e^\T Q=e$, we see that $x^\T x=L^2$ and $e^\T x=L$. Let $H$ be the generalized cospectral mate of $G$ corresponding to $Q$, i.e., $A(H)=Q^\T A(G)Q$. As each diagonal entry of $H$ is zero, we have $\xi^\T A(G)\xi=0$ and hence $x^\T A(G)x=0$. As $(W^{(p_i)}(G))^\T Q=(W^{(p_i)}(H))^\T$, we easily find $(W^{(p_i)}(G))^\T (LQ)\equiv 0\pmod{L}$ and in particular, $(W^{(p_i)}(G))^\T x\equiv 0\pmod{L}$. Thus $(W^{(p_i)}(G))^\T x\equiv 0\pmod{p_i^{t_i}}$ as $p_i^{t_i}\mid L$.
\end{proof}
\begin{theorem}\label{ncc}
	Let $G$ be a controllable graph. Then $G$ has a generalized cospectral mate if and only if Sys. \eqref{sys1} has $n$ solutions $\xi_1,\ldots,\xi_n$, not all trivial, such that $\xi_i^\T \xi_j=0$ and $\xi_i^\T A(G)\xi_j\in \{0,L^2\}$ for all distinct $i$ and $j$.
\end{theorem}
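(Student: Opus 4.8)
The plan is to prove the two implications separately, using Theorem~\ref{connection}(\rmnum{1}) to translate between generalized cospectral mates of $G$ and non-permutation matrices in $\mathcal{Q}(G)$, and then reading each constraint of Sys.~\eqref{sys1} together with the two cross conditions on distinct indices off orthogonality, regularity, or the vanishing of the diagonal of an adjacency matrix. In effect this upgrades the proof of Lemma~\ref{nontrsol}, which extracts a single column of $LQ$, to one that uses all $n$ columns.

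For \emph{necessity}, I would start from a generalized cospectral mate $H$ of $G$ and let $Q$ be the unique regular orthogonal matrix with $Q^\T A(G)Q=A(H)$ given by Theorem~\ref{connection}(\rmnum{1}); here $Q$ is not a permutation matrix, for otherwise $A(H)$ would be a relabeling of $A(G)$ and $H\cong G$. Since $W(H)=Q^\T W(G)$ and $Q$ is invertible, $H$ is controllable, so the modified walk matrices $W^{(p_i)}(H)$ are defined and integral. By the definition of $\ell(G)$ and Proposition~\ref{lG2}, $\ell(Q)\mid\ell(G)\mid L$, hence $LQ$ is integral; write $\xi_1,\dots,\xi_n$ for its columns. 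Running the argument in the proof of Lemma~\ref{nontrsol} on each column of $Q$ shows every $\xi_k$ satisfies the four conditions of Sys.~\eqref{sys1}: $\xi_k^\T\xi_k=L^2$ and $e^\T\xi_k=L$ come from $Q^\T Q=I$ and $Qe=e$; $\xi_k^\T A(G)\xi_k=0$ is the vanishing of the $(k,k)$-entry of $A(H)$; and the congruences follow by transposing $Q^\T W^{(p)}(G)=W^{(p)}(H)$ (which holds because $M_p$ is a generalized-cospectral invariant, so $W^{(p)}(H)$ is produced from $W(H)$ by the same column operations turning $W(G)$ into $W^{(p)}(G)$) and observing that every column of $(W^{(p)}(G))^\T(LQ)=L\,(W^{(p)}(H))^\T$ is divisible by $L$, hence by each $p_i^{t_i}$. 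For distinct $i,j$, orthonormality of the columns of $Q$ gives $\xi_i^\T\xi_j=0$, and since the entries of $A(H)=Q^\T A(G)Q$ lie in $\{0,1\}$ we get $\xi_i^\T A(G)\xi_j=L^2(A(H))_{ij}\in\{0,L^2\}$. Finally, since $Q$ is not a permutation matrix, some column of $Q$ is not a unit coordinate vector, so the corresponding $\xi_k$ is nontrivial; hence $\xi_1,\dots,\xi_n$ are the required solutions.

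For \emph{sufficiency}, I would take solutions $\xi_1,\dots,\xi_n$ of Sys.~\eqref{sys1} as in the statement and form $Q=L^{-1}[\xi_1,\dots,\xi_n]$. Then $\xi_i^\T\xi_j=L^2\delta_{ij}$ gives $Q^\T Q=I$, and $e^\T\xi_i=L$ for all $i$ gives $e^\T Q=e^\T$, i.e. $Q^\T e=e$, whence $Qe=QQ^\T e=e$, so $Q\in\RO_n$. The $(i,j)$-entry of $Q^\T A(G)Q$ is $L^{-2}\xi_i^\T A(G)\xi_j$, which is $0$ for $i=j$ and lies in $\{0,1\}$ for $i\neq j$; since $Q^\T A(G)Q$ is symmetric, $Q^\T A(G)Q=A(H)$ for a graph $H$, so $Q\in\mathcal{Q}(G)$. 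As the $\xi_i$ are not all trivial, some column of $Q$ is not a unit coordinate vector, so $Q$ is not a permutation matrix; by the uniqueness in Theorem~\ref{connection}(\rmnum{1}), any permutation realising $H\cong G$ would equal $Q$, so $H\not\cong G$, i.e. $H$ is a generalized cospectral mate of $G$. (The congruences of Sys.~\eqref{sys1} are not used in this direction; they only shrink the search and are, by necessity, automatically met by any admissible family.)

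The main obstacle is not a hard argument but two places demanding care. In \emph{necessity}, the congruences are where the modified walk matrices genuinely enter: one must check that $H$ is controllable and use the generalized-cospectral invariance of $M_p(G;x)$ to know that $W^{(p)}(H)$ is an integral matrix obtained from $W(H)$ by exactly the operations turning $W(G)$ into $W^{(p)}(G)$, so that $Q^\T W^{(p)}(G)=W^{(p)}(H)$ holds verbatim rather than merely up to rescaling. In \emph{sufficiency}, the one step that is not pure linear algebra is inferring $H\not\cong G$ from ``$Q$ is not a permutation matrix''; this is exactly where controllability is used, via the uniqueness clause of Theorem~\ref{connection}(\rmnum{1}). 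Everything else reduces to routine manipulation of regular orthogonal matrices.
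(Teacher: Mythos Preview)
Your proposal is correct and follows essentially the same approach as the paper: the ``only if'' direction lifts the single-column argument of Lemma~\ref{nontrsol} to all $n$ columns of $LQ$, and the ``if'' direction assembles $Q=L^{-1}[\xi_1,\dots,\xi_n]$, checks $Q\in\RO_n$ and $Q^\T A(G)Q$ is an adjacency matrix, and uses the uniqueness clause of Theorem~\ref{connection}(\rmnum{1}) to conclude $H\not\cong G$ from the non-permutation property of $Q$. The paper's own proof is terser (it just cites Lemma~\ref{nontrsol} for necessity and calls the sufficiency verification ``routine''), but you have correctly unpacked precisely the points that need care, in particular the invariance of $M_p$ ensuring $Q^\T W^{(p)}(G)=W^{(p)}(H)$ and the role of uniqueness in the non-isomorphism step.
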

\begin{proof}
	The `only if' part is clear using the same argument of Lemma \ref{nontrsol}. To show the `if' part, we set $Q=\frac{1}{L}[\xi_1,\ldots,\xi_n]$. It is routine to check that $Q$ is a rational regular orthogonal matrix and $Q^\T A(G)Q$ is a 0-1 matrix with vanishing diagonals. Since at least one of $\xi_i$'s are nontrivial, $Q$ is not the permutation matrix. This means the graph with adjacency matrix $Q^\T A(G)Q$ is not isomorphic to $G$ and hence is a generalized cospectral mate of $G$.
\end{proof}
The following definition suggested by Theorem \ref{ncc} is crucial.
\begin{definition}\label{OG}\normalfont{
	Let $G$ be a controllable graph.  Define a graph $\Omega(G)$ whose vertex set is the solution set of \eqref{sys1}; two vertices $\xi$ and $\eta$ are adjacent if $\xi^\T\eta=0$ and $\xi^\T A(G) \eta\in\{0,L^2\}$.}
\end{definition}
We remark that if Sys. \eqref{sys1} has no nontrivial solutions, then $\Omega(G)$ is $K_n$, the complete graph of order $n$. We note that $\Omega(G)$ always contains a clique of order $n$ consisting of the $n$ trivial solutions $Le^{(k)}$, $k=1,\ldots,n$. In particular, $\Omega(G)$ is necessarily $K_n$ when $L=1$.  We also note that there exist no cliques of order $n+1$ in $\Omega(G)$ since vertices in any clique are necessarily pairwise orthogonal as vectors in Euclidean space $\mathbb{R}^n$.

We say two matrices $Q_1$ and $Q_2$  are column permutation equivalent, if there exists a permutation matrix $P$ such that $Q_2=Q_1P$. For a controllable graph $G$, every generalized cospectral mate of $G$ (together with $G$) corresponds to a unique $Q$ in $ \mathcal{Q}(G)$. Two generalized cospectral mates of $G$ are isomorphic if and only if the corresponding $Q$'s are column permutation equivalent, that is, the corresponding maximum cliques are the same. This leads to the following result, which is a refinement of Theorem  \ref{ncc}.
\begin{theorem}\label{nsc2}
	Let $G$ be a controllable graph. Let $\mathcal{C}(G)$ be the set of all unlabeled graphs generalized cospectral with $G$. Then there is a one-to-one correspondence between $\mathcal{C}(G)$ and the maximum cliques of $\Omega(G)$.
\end{theorem}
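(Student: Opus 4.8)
The plan is to exhibit an explicit bijection $\Phi$ between $\mathcal{C}(G)$ and the set of maximum cliques of $\Omega(G)$, building directly on Theorem \ref{ncc} and on the uniqueness clause of Theorem \ref{connection}(\rmnum{1}). First I would record the two preliminary facts that make everything work. (a) Every maximum clique of $\Omega(G)$ has exactly $n$ vertices: at most $n$, because the vertices are pairwise orthogonal nonzero vectors in $\mathbb{R}^n$ (each solution $x$ of Sys.~\eqref{sys1} satisfies $x^\T x=L^2>0$), and at least $n$, because the $n$ trivial solutions $Le^{(k)}$ already form a clique. (b) For controllable $G$, two graphs $H_1,H_2$ generalized cospectral with $G$ are isomorphic if and only if the unique matrices $Q_1,Q_2\in\mathcal{Q}(G)$ with $Q_i^\T A(G)Q_i=A(H_i)$ are column permutation equivalent; this is immediate from Theorem \ref{connection}(\rmnum{1}), since $A(H_2)=P^\T A(H_1)P$ gives $(Q_1P)^\T A(G)(Q_1P)=A(H_2)$, whence $Q_2=Q_1P$ by uniqueness, and the converse is a one-line computation. (Fact (b) and the clique reformulation are exactly the content of the paragraph preceding the statement.)

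Next I would define the map. Given an unlabeled graph in $\mathcal{C}(G)$, choose any labeled representative $H$; since $H$ is generalized cospectral with $G$, Theorem \ref{connection}(\rmnum{1}) furnishes a unique $Q\in\mathcal{Q}(G)$ with $Q^\T A(G)Q=A(H)$, and I set $\Phi([H])$ to be the unordered set of columns of $LQ$. Exactly as in the proof of Lemma \ref{nontrsol}, each column of $LQ$ is a solution of Sys.~\eqref{sys1}; and for distinct columns $\xi_i=Lq_i,\ \xi_j=Lq_j$ one has $\xi_i^\T\xi_j=L^2(Q^\T Q)_{ij}=0$ and $\xi_i^\T A(G)\xi_j=L^2A(H)_{ij}\in\{0,L^2\}$, so the columns form a clique. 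Since the columns of $Q$ are linearly independent they are distinct, so $\Phi([H])$ is genuinely a set of $n$ vertices, hence a maximum clique of $\Omega(G)$ by Fact (a). Independence of the chosen representative $H$ is precisely Fact (b): column permutation equivalent matrices have the same set of columns.

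Finally I would verify that $\Phi$ is a bijection. Injectivity: if $\Phi([H_1])=\Phi([H_2])$, then $LQ_1$ and $LQ_2$ have the same columns, so $Q_1$ and $Q_2$ are column permutation equivalent, hence $H_1\cong H_2$ by Fact (b), i.e.\ $[H_1]=[H_2]$. Surjectivity: given a maximum clique $\{\xi_1,\ldots,\xi_n\}$, order it arbitrarily and put $Q=\frac{1}{L}[\xi_1,\ldots,\xi_n]$; as in the `if' part of Theorem \ref{ncc}, $Q$ is a rational regular orthogonal matrix and $Q^\T A(G)Q$ is the (symmetric, $0$-$1$, zero-diagonal) adjacency matrix of a graph $H$ generalized cospectral with $G$. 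By the uniqueness in Theorem \ref{connection}(\rmnum{1}), this $Q$ is the matrix attached to $H$, so $\Phi([H])$ is the set of columns of $LQ$, namely the given clique.

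I expect the proof to be essentially bookkeeping: the substantive input — the dictionary among $\mathcal{Q}(G)$, solution sets of Sys.~\eqref{sys1}, and cliques of $\Omega(G)$ — is already in place via Theorem \ref{ncc} and the remarks before the statement. The only point requiring genuine care is keeping the three quotient structures aligned: labeled cospectral graphs modulo isomorphism, $\mathcal{Q}(G)$ modulo right multiplication by permutation matrices, and ordered column-tuples modulo reordering into unordered cliques; the theorem asserts these coincide, and the sole way to go wrong is to drop the uniqueness of $Q$ somewhere. I do not anticipate any real obstacle beyond that.
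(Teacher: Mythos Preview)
Your proposal is correct and follows essentially the same approach as the paper: the paper's argument is the short paragraph immediately preceding the theorem (uniqueness of $Q$ from Theorem~\ref{connection}(\rmnum{1}), isomorphism $\Leftrightarrow$ column permutation equivalence $\Leftrightarrow$ same clique), and you have faithfully expanded that sketch into a full proof with the map $\Phi$ made explicit and well-definedness, injectivity, and surjectivity checked.
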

The significance of Theorem \ref{nsc2} is that it gives a method to generate $\mathcal{C}(G)$ and hence all its generalized cospectral mates. In particular, it gives the following complete criterion to determine whether a controllable graph is DGS or not.
\begin{corollary}\label{dgscon}
	Let $G$ be a controllable graph. Then $G$ is DGS if and only if $\Omega(G)$ has a unique maximum clique.
\end{corollary}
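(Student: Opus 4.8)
The plan is to deduce Corollary \ref{dgscon} directly from Theorem \ref{nsc2}, after unwinding the definition of DGS and recording one elementary fact about $\Omega(G)$. First I would observe that, by definition, $G$ is DGS exactly when it has no generalized cospectral mate; since a generalized cospectral mate is a graph generalized cospectral with $G$ but not isomorphic to it, this is the same as saying that the only unlabeled graph generalized cospectral with $G$ is $G$ itself. In the notation of Theorem \ref{nsc2} this reads $|\mathcal{C}(G)|=1$, and note that $\mathcal{C}(G)$ is always nonempty since $G\in\mathcal{C}(G)$.

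Next I would invoke the one-to-one correspondence furnished by Theorem \ref{nsc2} between $\mathcal{C}(G)$ and the set of maximum cliques of $\Omega(G)$. As this is a bijection between finite sets, $|\mathcal{C}(G)|=1$ holds if and only if $\Omega(G)$ has exactly one maximum clique; combined with the previous paragraph, this is precisely the statement of the corollary, with both directions obtained simultaneously.

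The only point requiring a word of care --- and the nearest thing to an obstacle here --- is making sure that ``maximum clique'' is the correct notion and that such cliques form a well-defined, nonempty family. This is already settled by the remarks preceding Theorem \ref{nsc2}: the $n$ trivial solutions $Le^{(1)},\ldots,Le^{(n)}$ are pairwise orthogonal and satisfy $(Le^{(i)})^\T A(G)(Le^{(j)})=L^2 A(G)_{ij}\in\{0,L^2\}$ for $i\neq j$, so they form a clique of order $n$ in $\Omega(G)$; and no clique of order $n+1$ can exist, because the vertices of any clique in $\Omega(G)$ are pairwise orthogonal vectors in $\mathbb{R}^n$. Hence every maximum clique has exactly $n$ vertices, the trivial clique is always one of them, and the maximum clique matched to $G$ itself under the bijection of Theorem \ref{nsc2} is exactly this trivial clique. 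Thus ``$G$ is DGS'' translates into ``the trivial clique is the unique maximum clique of $\Omega(G)$'', which is what the corollary asserts.
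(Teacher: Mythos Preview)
Your argument is correct and is exactly the intended one: the paper presents this corollary as an immediate consequence of Theorem \ref{nsc2}, and your proof just spells out that $G$ is DGS iff $|\mathcal{C}(G)|=1$ iff $\Omega(G)$ has a unique maximum clique, together with the already-recorded facts that the trivial clique has order $n$ and no clique can be larger.
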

Note that if $\Omega(G)$ is the complete graph $K_n$, then $\Omega(G)$ trivially has a unique maximum clique. Thus the following corollary is immediate.
\begin{corollary}
Let $G$ be a controllable graph of order $n$. If  $\Omega(G)$ is $K_n$ then $G$ is DGS.
\end{corollary}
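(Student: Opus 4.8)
The plan is to derive this immediately from Corollary~\ref{dgscon}. First I would record the elementary fact that, for any positive integer $n$, the complete graph $K_n$ has exactly one maximum clique: since $K_n$ has only $n$ vertices, no clique can have more than $n$ vertices, and the unique clique attaining size $n$ is the whole vertex set. Hence the hypothesis that $\Omega(G)$ is $K_n$ forces $\Omega(G)$ to possess a unique maximum clique.

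With that in hand, the proof is a one-line appeal to Corollary~\ref{dgscon}: because $G$ is controllable, that corollary tells us $G$ is DGS if and only if $\Omega(G)$ has a unique maximum clique; combining this equivalence with the observation of the previous paragraph yields that $G$ is DGS.

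If one prefers to bypass Corollary~\ref{dgscon} entirely, the same conclusion can be reached directly from Theorem~\ref{ncc}. If $\Omega(G)=K_n$, then the solution set of Sys.~\eqref{sys1} has exactly $n$ elements, which (as noted after Definition~\ref{OG}, since $\Omega(G)$ always contains the $n$ trivial solutions $Le^{(1)},\ldots,Le^{(n)}$) must be precisely those $n$ trivial solutions. Consequently any choice of $n$ solutions satisfying the hypotheses of Theorem~\ref{ncc} is a relabelling of the trivial ones, hence all trivial, contradicting the requirement ``not all trivial'' in that theorem. By Theorem~\ref{ncc}, $G$ has no generalized cospectral mate, i.e.\ $G$ is DGS.

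I do not anticipate any genuine obstacle here; the statement is a corollary in the strict sense. The only point meriting a word of care is to confirm that ``maximum clique'' is understood as a clique of largest cardinality, so that ``unique maximum clique'' in Corollary~\ref{dgscon} is exactly the notion realized by $K_n$ — and for $K_n$ this holds for the trivial reason noted above, together with the earlier observation that $\Omega(G)$ contains no clique of order $n+1$.
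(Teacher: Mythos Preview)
Your proposal is correct and your primary argument is exactly the paper's: the paper simply notes that if $\Omega(G)=K_n$ then $\Omega(G)$ trivially has a unique maximum clique, whence the result follows immediately from Corollary~\ref{dgscon}. Your alternative route via Theorem~\ref{ncc} is also valid but is more than the paper bothers with.
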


\subsection{Solving the equations}\label{threesteps}
We shall solve Sys. \eqref{sys1} in three steps:

\noindent\emph{Step 1:} For each $p_i$ $(i=1,2,\ldots,s)$, solve the following system of congruence equations
\begin{equation}\label{sysi}
\left\{
\begin{aligned}
&(W^{(p_i)}(G))^\T \xi_i\equiv 0\pmod {p_i^{t_i}},& \\
&\xi_i^\T \xi_i\equiv 0\pmod{p_i^{t_i}},&\\
&\xi_i^\T A(G)\xi_i\equiv 0\pmod {p_i^{t_i}}.&
\end{aligned}
\right.
\end{equation}

\noindent\emph{Step 2:} Let $\Pi_i$ denote the solution set of Sys. \eqref{sysi} for each $i$. For each element $(\xi_1,\xi_2,\ldots,\xi_s)$ in the Cartesian product $\Pi_1\times \Pi_2\times\cdots\times\Pi_s$, solve the system of congruence equations for vector $\eta$:
 \begin{equation}\label{syscrt}
\left\{
\begin{aligned}
&\eta \equiv \xi_1\pmod {p_1^{t_1}},& \\
&\eta\equiv \xi_2\pmod{p_2^{t_2}},&\\
&\cdots\\
&\eta \equiv \xi_s\pmod{p_s^{t_s}}.&
\end{aligned}
\right.
\end{equation}
\noindent\emph{Step 3:} For each solution $\eta$ obtained in Step 2, find all solutions of the following  equation
\begin{equation}\label{sysdio}
\left\{
\begin{aligned}
&x \equiv \eta\pmod {L}, \\
&x^\T x=L^2, \\
&e^\T x=L,\\
&x^\T A(G) x=0.
\end{aligned}
\right.
\end{equation}
We recall two standard tools for solving the equations in the first two steps. The following lemma is a standard application of Smith normal form. See, e.g., \cite{newman1972}
\begin{lemma}\label{allsol}
	Let $M$ be an $n\times n$ integral matrix and $m$ be a positive integer. Let $U$ and $V$ be unimodular integral matrices such that $UMV=\diag [d_1,d_2,\ldots,d_n]$, the Smith normal form of $M$. Then solutions of
	$Mx\equiv 0\pmod m$  can be expressed as
	$\sum_{i=1}^{n}k_i\frac{m}{\gcd(m,d_i)}Ve^{(i)}$, where $k_i\in \{0,1,\ldots,\gcd(m,d_i)-1\}$ and $e^{(i)}$ is the $i$-th  unit coordinate vector.
\end{lemma}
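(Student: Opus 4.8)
The plan is to reduce the congruence to a diagonal one via the Smith normal form, solve it coordinatewise, and then transform back. Writing $D=\diag[d_1,\ldots,d_n]=UMV$, we have $M=U^{-1}DV^{-1}$, and since $U$ and $V$ are unimodular all four of $U,V,U^{-1},V^{-1}$ are integral. First I would record the chain of equivalences, valid for an integral vector $x$,
\begin{equation*}
Mx\equiv 0\pmod m\iff U^{-1}DV^{-1}x\equiv 0\pmod m\iff DV^{-1}x\equiv 0\pmod m,
\end{equation*}
where the last step uses that left multiplication by the integral matrix $U$ (with integral inverse $U^{-1}$) induces a bijection on $(\mathbb{Z}/m\mathbb{Z})^n$. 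Setting $y=V^{-1}x$, equivalently $x=Vy$, the system becomes $Dy\equiv 0\pmod m$.

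Next I would solve the diagonal system. Since $D$ is diagonal, $Dy\equiv 0\pmod m$ is equivalent to the $n$ independent scalar congruences $d_iy_i\equiv 0\pmod m$, $i=1,\ldots,n$. The elementary fact that $d_iy_i\equiv 0\pmod m$ holds if and only if $\frac{m}{\gcd(m,d_i)}\mid y_i$ then shows that $y_i$ must equal $k_i\frac{m}{\gcd(m,d_i)}$ for some integer $k_i$, and that two choices $k_i,k_i'$ give the same residue of $y_i$ modulo $m$ exactly when $k_i\equiv k_i'\pmod{\gcd(m,d_i)}$. Hence, modulo $m$, the coordinate $y_i$ ranges over precisely the $\gcd(m,d_i)$ values obtained with $k_i\in\{0,1,\ldots,\gcd(m,d_i)-1\}$.

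Finally I would transform back: $x=Vy=\sum_{i=1}^n y_i\,Ve^{(i)}=\sum_{i=1}^n k_i\frac{m}{\gcd(m,d_i)}\,Ve^{(i)}$, which is exactly the asserted expression; and since $x\mapsto Vy$ is a bijection on $(\mathbb{Z}/m\mathbb{Z})^n$, distinct tuples $(k_1,\ldots,k_n)$ in the stated ranges yield distinct solutions modulo $m$, so the list is complete and irredundant, with $\prod_{i}\gcd(m,d_i)$ members. I do not expect a genuine obstacle here; the only points requiring a little care are the bookkeeping of residues modulo $m$ across the two unimodular changes of variable — so that each coordinate contributes exactly $\gcd(m,d_i)$, and not more or fewer, distinct solutions — together with the scalar divisibility fact $d_iy_i\equiv 0\pmod m\iff \frac{m}{\gcd(m,d_i)}\mid y_i$. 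The lemma is ultimately a direct unwinding of the Smith normal form.
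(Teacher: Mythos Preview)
Your proof is correct and follows essentially the same route as the paper: substitute $x=Vy$ to reduce $Mx\equiv 0\pmod m$ to the diagonal system $d_iy_i\equiv 0\pmod m$, solve each scalar congruence as $y_i=k_i\,m/\gcd(m,d_i)$ with $k_i\in\{0,\ldots,\gcd(m,d_i)-1\}$, and transform back. You are slightly more explicit than the paper about why the unimodular changes of variable are bijections modulo $m$ and about the total solution count, but the argument is the same.
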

\begin{proof}
	Let $x=Vy=V(y_1,\ldots,y_n)^\T$. Then the equation $Mx\equiv 0\pmod {m}$ is equivalent to \begin{equation}\label{equ_y}
	(\diag[d_1,d_2,\ldots,d_n])y\equiv 0\pmod{m}.
	\end{equation}
	That is, $d_iy_i\equiv 0\pmod m$ for $i\in \{1,2,\ldots,n\}$. Note that  $d_iy_i\equiv 0\pmod m$ has exactly $\gcd(m,d_i)$ different solutions (modulo $m$), which can be expresses as $k_i\frac{m}{\gcd(m,d_i)}$, for $k_i\in\{0,1,\ldots,\gcd(m,d_i)-1\}$. Thus, the solution set of Eq. $\eqref{equ_y}$ is 	$\sum_{i=1}^{n}k_i\frac{m}{\gcd(m,d_i)}e^{(i)}$. Noting that $x=Vy$, the proof is complete.
\end{proof}
\begin{corollary}\label{numsl}
	$(W^{(p_i)}(G))^\T \xi_i\equiv 0\pmod {p_i^{t_i}}$ has exactly
	$$p_i^{\ord_{p_i}(\det W^{(p_i)}(G))}$$
		different solutions (modulo $p_i^{t_i}$).	
\end{corollary}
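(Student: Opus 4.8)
The plan is to apply Lemma~\ref{allsol} directly and then unwind the resulting product. First I would observe that transposition leaves invariant factors unchanged, so $(W^{(p_i)}(G))^\T$ and $W^{(p_i)}(G)$ share the same Smith normal form $\diag[d_1,d_2,\ldots,d_n]$ with $d_1\mid d_2\mid\cdots\mid d_n$. Moreover $W^{(p_i)}(G)$ is nonsingular: it is obtained from the nonsingular matrix $W(G)$ by unit upper triangular (hence unimodular) column operations followed by dividing finitely many columns by $p_i$, so $\det W^{(p_i)}(G)\ne 0$ and every $d_k$ is a positive integer. Applying Lemma~\ref{allsol} with $M=(W^{(p_i)}(G))^\T$ and $m=p_i^{t_i}$, the number of solutions of $(W^{(p_i)}(G))^\T\xi_i\equiv 0\pmod{p_i^{t_i}}$ modulo $p_i^{t_i}$ equals $\prod_{k=1}^n\gcd(p_i^{t_i},d_k)$.

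Next I would simplify each factor of this product. Since $d_k\mid d_n$, we have $\ord_{p_i}(d_k)\le\ord_{p_i}(d_n)=t_i$, the last equality being the definition $t_i=\ord_{p_i}(d_n(W^{(p_i)}(G)))$. Hence $\gcd(p_i^{t_i},d_k)=p_i^{\min(t_i,\,\ord_{p_i}(d_k))}=p_i^{\ord_{p_i}(d_k)}$ for every $k$, and therefore $\prod_{k=1}^n\gcd(p_i^{t_i},d_k)=p_i^{\sum_{k=1}^n\ord_{p_i}(d_k)}=p_i^{\ord_{p_i}\left(\prod_{k=1}^n d_k\right)}$.

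Finally I would use the fact that $|\det W^{(p_i)}(G)|=\prod_{k=1}^n d_k$, which holds because the unimodular matrices realizing the Smith normal form have determinant $\pm 1$. Consequently $\ord_{p_i}\left(\prod_{k=1}^n d_k\right)=\ord_{p_i}(\det W^{(p_i)}(G))$, so the number of solutions is exactly $p_i^{\ord_{p_i}(\det W^{(p_i)}(G))}$, as claimed. There is no genuinely hard step here; the only point requiring care is the inequality $\ord_{p_i}(d_k)\le t_i$, which is what makes the minimum in $\gcd(p_i^{t_i},d_k)$ collapse to $\ord_{p_i}(d_k)$, and everything else is bookkeeping with Lemma~\ref{allsol} together with the standard relation between the invariant factors and the determinant.
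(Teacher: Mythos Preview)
Your proposal is correct and follows essentially the same route as the paper's proof: apply Lemma~\ref{allsol} to obtain the count $\prod_{k=1}^n\gcd(p_i^{t_i},d_k)$, collapse each factor to $p_i^{\ord_{p_i}(d_k)}$ using $t_i=\ord_{p_i}(d_n)$, and recognize the exponent sum as $\ord_{p_i}(\det W^{(p_i)}(G))$. You are simply a bit more explicit than the paper in justifying nonsingularity and the inequality $\ord_{p_i}(d_k)\le t_i$.
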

\begin{proof}
	Let $d_k$ be the $k$-th invariant factor of $W^{(p_i)}(G)$. Note that $t_i=\ord_{p_i}(d_n)$. Using Lemma \ref{allsol}, the number of solutions to the congruence 	$(W^{(p_i)}(G))^\T \xi_i\equiv 0\pmod {p_i^{t_i}}$ is
	$$\prod_{k=1}^n \gcd(p_i^{t_i},d_k)=\prod_{k=1}^n p_i^{\ord_{p_i}(d_k)}= p_i^{\sum_{k=1}^n\ord_{p_i}(d_k)}=p_i^{\ord_{p_i}(\prod_{k=1}^nd_k)}=p_i^{\ord_{p_i}(\det W^{(p_i)}(G))}.$$
	This completes the proof.
\end{proof}
\begin{lemma}[Chinese Remainder Theorem]
	Let $m_1,m_2,\ldots,m_s$ be pairwise relatively prime positive integers. Then the system of congruences
	\begin{equation}\label{syscrtt}
	\left\{
	\begin{aligned}
	&x\equiv a_1\pmod {m_1},& \\
	&x\equiv a_2\pmod {m_2},&\\
	&\cdots	&&\\
	&x\equiv a_s\pmod {m_s}&\\
	\end{aligned}
	\right.
	\end{equation}
	has a unique solution modulo $M= m_1m_2 \cdots m_s$. Moreover, the unique solution can be expressed as
	$$x\equiv a_1M_1y_1+a_2M_2y_2+\cdots+a_sM_sy_s\pmod{M},$$
	where $M_i=\frac{M}{m_i}$ and $y_i$ is the inverse of $M_i$ modulo $m_i$.
\end{lemma}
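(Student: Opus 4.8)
The plan is to prove existence and uniqueness separately, with the explicit formula supplying the existence half. First I would isolate the one arithmetic fact that makes the formula well-defined: since $m_i$ is coprime to each $m_j$ with $j\neq i$, it is coprime to the product $M_i=\prod_{j\neq i}m_j=M/m_i$ (coprimality is multiplicative in each argument), so $M_i$ represents a unit modulo $m_i$ and an inverse $y_i$ with $M_iy_i\equiv 1\pmod{m_i}$ exists; one may read it off from a B\'ezout identity $um_i+vM_i=1$ by taking $y_i=v$.

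For existence I would simply substitute $x=\sum_{k=1}^s a_kM_ky_k$ into the $i$-th congruence. Since $m_i\mid M_k$ for every $k\neq i$, all terms but the $i$-th vanish modulo $m_i$, leaving $x\equiv a_iM_iy_i\equiv a_i\pmod{m_i}$ by the choice of $y_i$; as this holds for all $i$, the displayed formula solves the system, so a solution exists.

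For uniqueness I would take two solutions $x,x'$ and examine their difference: each congruence yields $m_i\mid(x-x')$, and because the $m_i$ are pairwise coprime their least common multiple equals $M=m_1m_2\cdots m_s$, whence $M\mid(x-x')$, i.e.\ $x\equiv x'\pmod M$. Combined with the previous paragraph, the solution modulo $M$ exists and is unique, and it is precisely the residue class of $\sum_k a_kM_ky_k$. There is no genuine obstacle here, the result being classical; the only points that warrant a word of care are the existence of the inverses $y_i$ (resting on ``coprime to each factor $\Rightarrow$ coprime to the product'') and the passage ``pairwise coprime $\Rightarrow$ lcm equals product'' in the uniqueness step, both elementary facts about divisibility that I would invoke without further comment.
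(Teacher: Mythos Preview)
Your proof is correct and entirely standard. The paper itself does not prove this lemma at all; it is merely stated as the classical Chinese Remainder Theorem and used as a tool, so there is no paper proof to compare against, and your argument supplies exactly the routine verification (existence via the explicit formula, uniqueness via pairwise coprimality implying $M\mid x-x'$) that any textbook would give.
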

It remains to deal with the system of equations in Step 3.  The basic technique comes from \cite{wangyu}, where the authors considered the special case that $m$ is an odd prime.
\begin{definition}\normalfont{
	Let $v$ be an integral vector. For a positive integer $m$, an integral vector $w$ is  called a \emph{perfect $m$-representative} of $v$ if $w\equiv v\pmod{m}$, $e^\T w=m$ and $w^\T w=m^2$. We say an integral vector $v$ is \emph{$m$-perfect} if it has at least one perfect $m$-representative.}
\end{definition}
\begin{definition}\label{sv}\normalfont{
		Let  $v$ be an integral vector. For a positive integer $m$, an integral vector $w=(w_1,\ldots,w_n)^{\T}$ is  called  \emph{the shortest  $m$-representative} of $v$ if $w\equiv v\pmod{m}$ and $-\frac{m}{2}<w_i\le \frac{m}{2}$ for $1\le i\le n$.}
\end{definition}
We note that the shortest $m$-representative of $v$ is always unique whereas there may be none, one, or many perfect $m$-representatives of $v$. For example, if $v$ is an $n$-dimensional vector with each entry zero modulo $m$,  then the shortest $m$-representative of $v$ is the zero vector; whereas $v$ has exactly $n$ perfect $m$-representatives: $(m,0,0\ldots,0)$, $(0,m,0,\ldots,0)$, $\ldots$, $(0,\ldots,0,m)$.

 For two vectors $u$ and $v$, the  \emph{Hamming distance} of $u$ and $v$ is the number of positions in which they differ. The following lemma indicates that for any integral vector $v$, all perfect $m$-representatives of $v$ are very close to its shortest $m$-presentative in the sense of Hamming distance. This fact was established in \cite{wangyu} for the case that $m$ is an odd prime.
\begin{lemma}\label{near}
Let $v=(v_1,\ldots,v_q;v_{q+1},\ldots,v_n)$ be an integral vector where $0< q\le n$, $v_i\not\equiv 0\pmod{m}$ for $i\in\{1,\ldots,q\}$ and $v_j\equiv 0\pmod{m}$ for $j\in\{q+1,\ldots,n\}$. Let $w=(w_1,w_2,\ldots,w_n)$ be a perfect $m$-presentative of $v$ and  $u=(u_1,\ldots,u_n)$  be the shortest $m$-presentative. Then the followings hold:

\noindent\textup{(\rmnum{1})} $u_i,w_i\neq 0$ for $i\in\{1,2,\ldots,q\}$, and $u_j=w_j=0$ for $j\in\{q+1,\ldots,n\}$;

\noindent\textup{(\rmnum{2})} The Hamming distance of $(w_1,w_2,\ldots,w_q)$ and $(u_1,u_2,\ldots,u_q)$ is at most $3$. Moreover, for any $i\in\{1,2,\ldots,q\}$ such that $w_i\neq u_i$, either $w_i=u_i-m$ and $u_i>0$, or $w_i=u_i+m$ and $u_i<0$.
\end{lemma}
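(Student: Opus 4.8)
The plan is to treat the two parts in turn; part~(\rmnum{1}) is a quick warm-up, and the Hamming-distance bound in part~(\rmnum{2}) is the real content. Nothing about the graph $G$ is used: the argument only invokes that $u$ and $w$ are both congruent to $v$ modulo $m$, that $u$ is the (unique) shortest $m$-representative (so each $u_i\in(-\tfrac m2,\tfrac m2]$), and that $w$ is a perfect $m$-representative, i.e.\ $e^\T w=m$ and $w^\T w=m^2$.

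For part~(\rmnum{1}): when $i\le q$ we have $u_i\equiv w_i\equiv v_i\not\equiv 0\pmod m$, so $u_i,w_i\ne 0$. When $j>q$ we have $u_j\equiv w_j\equiv 0\pmod m$; since $0$ is the only multiple of $m$ in $(-\tfrac m2,\tfrac m2]$, we get $u_j=0$ at once. For $w_j$ I would write $w_j=mk_j$ and read off from $m^2=w^\T w=\sum_{i\le q}w_i^2+m^2\sum_{j>q}k_j^2$ that $\sum_{j>q}k_j^2\le 1$; if this sum were $1$ then $\sum_{i\le q}w_i^2=0$, contradicting $w_1\ne 0$ (this is where $q\ge 1$ is needed), so all $k_j=0$ and $w_j=0$.

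For part~(\rmnum{2}): by part~(\rmnum{1}) both $u$ and $w$ are supported on $\{1,\dots,q\}$, hence $\sum_{i=1}^q w_i=m$ and $\sum_{i=1}^q w_i^2=m^2$. The two elementary bounds $|w_i|\le m$ (since $w_i^2\le w^\T w=m^2$) and $|u_i|\le\tfrac m2$ show that $w_i-u_i$ is a multiple of $m$ lying in $[-\tfrac{3m}2,\tfrac{3m}2]$, hence $w_i-u_i\in\{-m,0,m\}$. This is precisely the asserted shape $w_i=u_i\pm m$ on the disagreement set $D=\{i\le q:w_i\ne u_i\}$, and the ``moreover'' clause falls out: $w_i=u_i+m\le m$ forces $u_i\le 0$, hence (being nonzero) $u_i<0$, and symmetrically $w_i=u_i-m\ge-m$ forces $u_i>0$.

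It remains to prove $|D|\le 3$, and for this the idea is a counting argument against $\sum_{i=1}^q w_i^2=m^2$. For $i\in D$, combining $w_i=u_i\pm m$ with $-\tfrac m2<u_i\le\tfrac m2$ gives $\tfrac m2<w_i<m$ in the $+m$ case and $1-m\le w_i\le-\tfrac m2$ in the $-m$ case, so $w_i^2\ge\tfrac{m^2}{4}$ either way. Hence $\tfrac{|D|m^2}{4}\le\sum_{i\in D}w_i^2\le m^2$, giving $|D|\le 4$. To exclude $|D|=4$ one forces all these inequalities to equalities: $\sum_{i\le q,\,i\notin D}w_i^2=0$ would make $w_i=0$ for those indices, impossible by part~(\rmnum{1}) unless $q=|D|=4$; and $w_i^2=\tfrac{m^2}{4}$ for each of the four indices forces $w_i=\pm\tfrac m2$ with $m$ even, while $w_i=\tfrac m2$ cannot be written as $u_i\pm m$ with $u_i\in(-\tfrac m2,\tfrac m2]$, so $w_i=-\tfrac m2$ for all four and $e^\T w=-2m\ne m$, a contradiction. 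I expect this last exclusion to be the only step needing care --- precisely because the interval $(-\tfrac m2,\tfrac m2]$ is asymmetric when $m$ is even (for odd $m$, as in the case treated in \cite{wangyu}, these boundary configurations do not even arise) --- while everything else is routine bookkeeping with the bounds $|w_i|\le m$ and $|u_i|\le\tfrac m2$.
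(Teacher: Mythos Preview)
Your proof is correct and follows essentially the same route as the paper's: both arguments hinge on the bound $w_i^2\ge m^2/4$ for indices in the disagreement set and on the contradiction $e^\T w=-2m$ to rule out $|D|=4$. The only cosmetic difference is ordering---you establish $w_i=u_i\pm m$ first (via $|w_i|\le m$) and then bound $|D|$, whereas the paper bounds $|I|$ first and derives the $\pm m$ shape at the end; your handling of the boundary case $w_i=\pm m/2$ for even $m$ is also spelled out a bit more carefully than in the paper.
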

\begin{proof}
Let $i\in \{1,2,\ldots,q\}$. As $w_i\equiv u_i\equiv v_i$ and $v_i\not\equiv 0\pmod{m}$, we find that $u_i,v_i\neq 0$. Let $j\in\{q+1,\ldots,n\}$.	As $v_j\equiv 0\pmod{m}$, we see that $u_j=0$ by Definition \ref{sv}. Note that $w_j\equiv v_j\equiv 0\pmod{m}$. Suppose to the contrary that $w_j\neq 0$. Then $w_j=km$ for some nonzero integer $k$, implying $w_j^2\ge m^2$. It follows that $w^\T w\ge w_1^2+w_j^2>m^2$, a contradiction. This proves (\rmnum{1}).

Let $I=\{i\colon\,1\le i\le q\text{~and~}w_i\neq u_i\}$. Let $i\in I$.  As $-m/2<v_i\le m/2$ and $w_i=u_i+km$ for some nonzero integer $k$, we find that $|w_i|\ge m/2$ with equality holding if and only if $v_i=m/2$ and $w_i=-m/2$. Suppose to the contrary that $|I|\ge 4$. Then we have $$w^\T w\ge \sum_{i\in I}|w_i^2|\ge 4\times \left(\frac{m}{2}\right)^2\ge m^2.$$
This means all equality must hold simultaneously. Thus $|I|=q=4$ and $w_i=-m/2$ for each $i\in I$. Consequently, we have $e^\T w=-2m$, a contradiction. This proves $|I|\le3$. The remaining part of (\rmnum{2}) clearly follows from the trivial inequality that $|w_i|\le m$.
\end{proof}
\begin{corollary}\label{necforperfect}
	Let $v\not\equiv 0\pmod{m}$ and $w$ be the shortest $m$-representative of $v$. If $v$ is $m$-perfect then $w^\T w\le m^2$ and  $|e^\T w-m|\le 3m$.
\end{corollary}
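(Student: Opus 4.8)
The plan is to derive the corollary directly from Lemma~\ref{near}, which already packages all the structural information needed. Assume $v$ is $m$-perfect and fix a perfect $m$-representative $w'$ of $v$, so that $w'\equiv v\pmod m$, $e^\T w'=m$ and $(w')^\T w'=m^2$. Since $v\not\equiv 0\pmod m$, after permuting coordinates (which affects neither $w^\T w$, nor $e^\T w$, nor the defining property of the shortest representative) we may write $v=(v_1,\ldots,v_q;v_{q+1},\ldots,v_n)$ with $1\le q\le n$, $v_i\not\equiv 0\pmod m$ for $i\le q$ and $v_j\equiv 0\pmod m$ for $j>q$. We then apply Lemma~\ref{near} to this $v$, taking $w'$ in the role of the perfect representative and $w$ (the shortest $m$-representative) in the role of $u$.

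First I would read off from Lemma~\ref{near}(\rmnum{1}) that $w_j=w'_j=0$ for $j>q$, so both $w^\T w$ and $(w')^\T w'$ are sums over the first $q$ coordinates only. Next, by Lemma~\ref{near}(\rmnum{2}), the set $I=\{i\le q\colon w_i\neq w'_i\}$ satisfies $|I|\le 3$, and for each $i\in I$ either $w'_i=w_i-m$ with $w_i>0$, or $w'_i=w_i+m$ with $w_i<0$; since $0<|w_i|\le m/2$ by Definition~\ref{sv}, in both cases $|w'_i|=m-|w_i|\ge |w_i|$. Hence $w_i^2\le (w'_i)^2$ for $i\in I$, while $w_i^2=(w'_i)^2$ for the remaining $i\le q$, and therefore $w^\T w=\sum_{i\le q} w_i^2\le \sum_{i\le q}(w'_i)^2=(w')^\T w'=m^2$, which is the first assertion.

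For the second assertion I would compute $e^\T w-e^\T w'=\sum_{i\in I}(w_i-w'_i)$, since the two vectors agree outside $I$. By the case analysis above, $w_i-w'_i=m$ when $w_i>0$ and $w_i-w'_i=-m$ when $w_i<0$, so each summand has absolute value exactly $m$; as $|I|\le 3$ this gives $|e^\T w-e^\T w'|\le 3m$, and since $e^\T w'=m$ we conclude $|e^\T w-m|\le 3m$. There is no substantial obstacle here: the entire content of the corollary is already contained in Lemma~\ref{near}. The only mild points to be careful about are that the hypothesis $v\not\equiv 0\pmod m$ is precisely what guarantees $q\ge 1$ so that Lemma~\ref{near} is applicable, and that all quantities in the statement are invariant under permutation of coordinates, so the initial reordering is harmless.
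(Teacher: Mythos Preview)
Your proof is correct and is exactly the argument the paper has in mind: the corollary is stated without proof as an immediate consequence of Lemma~\ref{near}, and your derivation spells out precisely those details (coordinatewise comparison on the set $I$ of size at most $3$, using that $|w'_i|=m-|w_i|\ge |w_i|$ to get $w^\T w\le (w')^\T w'=m^2$, and summing $w_i-w'_i=\pm m$ over $I$ to bound $|e^\T w-m|$). There is nothing to add.
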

Lemma \ref{near} gives a simple way to solve system \eqref{sysdio} in Step 3. Indeed, the first three equations in \eqref{sysdio} mean that $x$ is an $L$-perfect representative of $\eta$. Since the case that $\eta\equiv 0\pmod{L}$ is trivial, we assume $\eta\not\equiv 0\pmod{L}$. Let $\hat{\eta}$ be the shortest $L$-representative of $\eta$.  If $\hat{\eta}^\T \hat{\eta}>L^2$ or $|e^\T \hat{\eta}-L|>3L$ then $\eta$ is not $L$-perfect by Corollary \ref{necforperfect} and hence  Sys.~\eqref{sysdio} has no solutions for this $\eta$. Otherwise, we can search all possible $L$-perfect representatives of $\eta$  in $O(q^3)$ times by Lemma \ref{near}, where $q$ is the number of entries in $\xi$ which are nonzero modulo $L$. By checking the last equation for each  $L$-perfect representative $x$, we obtain the solution set of \eqref{sysdio}.

In \cite{wangyu}, the authors gave a more refined analysis on the Hamming distance between perfect representatives and the shortest representative, which can be used to speed up the searching for perfect representatives. We refer the reader to \cite{wangyu} for details.
\subsection{An example}
As an illustration, we try to find all generalized cospectral mates for the graph in Example \ref{g13}. We already know that $L=2^2\times 3^1=12$. For this graph,  Sys. \eqref{sys1} becomes
\begin{equation}\label{sys1forexam}
\left\{
\begin{aligned}
&(W^{(2)}(G))^\T x\equiv 0\pmod {4},& \\
&(W^{(3)}(G))^\T x\equiv 0\pmod {3},&\\
&x^\T x=144,&\\
&e^\T x=12,&\\
&x^\T A(G)x=0.&
\end{aligned}
\right.
\end{equation}
In Step 1, we  solve the following two congruence systems (of variables $\xi_1$ and $\xi_2$ respectively):
\begin{equation}\label{sysiforexam}
\left\{
\begin{aligned}
&(W^{(2)}(G))^\T \xi_1\equiv 0\pmod {4},& \\
&\xi_1^\T \xi_1\equiv 0\mod{4},&\\
&\xi_1^\T A(G)\xi_1\equiv 0\pmod {4};&
\end{aligned}
\right.\quad \text{and}\quad \left\{
\begin{aligned}
&(W^{(3)}(G))^\T \xi_2\equiv 0\pmod {3},& \\
&\xi_2^\T \xi_2\equiv 0\mod{3},&\\
&\xi_2^\T A(G)\xi_2\equiv 0\pmod {3}.&
\end{aligned}
\right.
\end{equation}
Consider the equation $(W^{(2)}(G))^\T \xi_1\equiv 0\pmod {4}$. Using Lemma \ref{allsol}, all solutions for this equation can be expressed as

$$\xi_1=c_1\alpha+c_2\beta,$$
where $c_1,c_2\in\{0,1,2,3\}$, $\alpha=(1, 2, 2, 0, 1, 1, 2, 2, 3, 2, 3, 1, 0)^\T$  and $\beta=(0, 1, 3, 2, 3, 3, 2, 1, 2, 2,$
$3, 1, 1)^\T$. Among these 16 solutions, 8 of them satisfy the remaining two equations for the left of \eqref{sysiforexam}. We record the solutions in the first column of Table \ref{CRTforexam}. Similarly, we find that the right hand side of \eqref{sysiforexam} has exactly 3 solutions, as described in the first row of Table \ref{CRTforexam}.
	
In Step 2, we solve the system of congruences \begin{equation}\label{syscrt2}
\left\{
\begin{aligned}
&\eta \equiv \xi_1\pmod {4}, &\\
&\eta \equiv \xi_2\pmod{3}&
\end{aligned}
\right.
\end{equation}
using the Chinese Remainder Theorem for every entry of $\eta$. We have $\eta\equiv 9\xi_1+4\xi_2\pmod{12}$. The solutions to \eqref{syscrt2} for all combinations of $\xi_1$ and $\xi_2$ are listed in Table \ref{CRTforexam}.
\begin{table} 	
	\centering \caption{Illustration of Step 2.}\label{CRTforexam}
	\begin{threeparttable}
		\begin{tabular}{c|ccccc} \toprule \diagbox[width=12em] {$\quad\quad\quad\quad\xi_1$}{$\xi_2\quad\quad\quad$} &{0000000000000}&&{0010021000122}&&{0020012000211}\\\hline
			{0000000000000}&{0000000000000}&&{0040084000488}&&{0080048000844}\\
			{0132332122311}&{0936336966399}&&{09763ba966755}&&{09b6372966b11}\\
			{0220220200222}&{0660660600666}&&{06a0624600a22}&&{06206a86002aa}\\
			{0312112322133}&{0396996366933}&&{031695a3661bb}&&{0356912366577}\\
			{2000220020220}&{6000660060660}&&{6040624060a28}&&{60806a80602a4}\\
			{2132112102131}&{6936996906939}&&{697695a9061b5}&&{69b6912906571}\\
			{2220000220002}&{6660000660006}&&{66a0084660482}&&{662004866084a}\\
			{2312332302313}&{6396336306393}&&{63163ba30675b}&&{6356372306b17}\\
			\bottomrule \end{tabular}
		\begin{tablenotes}
			\footnotesize
			\item[*]$a=10,b=11.$
		\end{tablenotes}
	\end{threeparttable} \label{tab:Training_sizes} \end{table}

In Step 3, for each $\eta$, we generate all possible perfect $12$-representatives based on Lemma \ref{near}. We leave all such representatives unchanged if they also satisfy the equation $x^\T Ax=0$. This gives the solution set of \eqref{sys1forexam}.  We record the solutions as columns in the following matrix.

\begin{equation}\small{
X=
	\left(
	\begin{array}{ccccccccccccc}
  &    &  &  0& 0 & 0 & 0 & 0 & 0 & 0 & 0 & 0 & 0 \\
 &    &  & 0 & 0 & 0 & 0 & 0 & 0 & -6 & 6 & 6 & 6  \\
 &    &  &4 & 4 & 4 & 8 & -4 & -4 & -2 & -2 & -2 & 2 \\
  &    &  & 0 & 0 & 0 & 0 & 0 & 0 & 0 & 0 & 0 & 0 \\
 &    &  & 0 & 0 & 0 & 0 & 0 & 0 & 6 & -6 & 6 & 6 \\
 &    &  & 8 & -4 & -4 & 4 & 4 & 4 & 2 & 2 & 2 & -2 \\
12e^{(1)},  & \cdots, &12 e^{(13)}, & 4 & 4 & 4 & -4 & 8 & -4 & 4 & 4 & 4 & -4 \\
 &    &  & 0 & 0 & 0 & 0 & 0 & 0 & 6 & 6 & -6 & 6 \\
 &   &  & 0 & 0 & 0 & 0 & 0 & 0 & 0 & 0 & 0 & 0 \\
 &    &  & 0 & 0 & 0 & 0 & 0 & 0 & 0 & 0 & 0 & 0 \\
 &    &  &  4 & 4 & 4 & -4 & -4 & 8 & -2 & -2 & -2 & 2\\
 &    &  & -4 & 8 & -4 & 4 & 4 & 4 & 2 & 2 & 2 & -2 \\
  &    &  & -4 & -4 & 8 & 4 & 4 & 4 & 2 & 2 & 2 & -2 \\
	\end{array}
	\right)_{13\times 23}.}
\end{equation}
We write the columns of $X$ as $x^{(1)},\ldots,x^{(23)}$. Now the graph $\Omega(G)$ has vertices $x^{(1)},\ldots, x^{(23)}$; two vertices $x^{(i)}$ and $x^{(j)}$ are adjacent whenever $(x^{(i)})^\T x^{(j)}=0$ and $(x^{(i)})^\T A x^{(j)}=0$ or 144. Using Mathematica, we find that the graph $\Omega(G)$ has exactly $3$ cliques of order 13 (including the trivial clique consisting of the $e^{(i)}$'s). The two nontrivial cliques of order 13 are
$$C_1=\{x^{(1)}, x^{(4)},x^{(9)},x^{(10)},x^{(14)},x^{(15)},x^{(16)},x^{(17)},x^{(19)},x^{(20)},x^{(21)},x^{(22)},x^{(23)}\},$$ and $$C_2=\{
x^{(1)}, x^{(2)},x^{(4)},x^{(5)},x^{(8)},x^{(9)},x^{(10)},x^{(14)},x^{(15)},x^{(16)},x^{(17)},x^{(18)},x^{(19)}\}.$$
These cliques correspond to matrices in $\mathcal{Q}$ which are not permutation matrices. This means $G$ has exactly two generalized cospectral mates which can be recovered easily by the operation $Q^\T A(G) Q$.  For the sake of clarity, we write the associated matrices as follows:

Two matrices  corresponding to $C_1$ and $C_2$ are
\begin{equation}\small{
Q_1=\frac{1}{6}
\left(
\begin{array}{ccccccccccccc}
6 & 0 & 0 & 0 & 0 & 0 & 0 & 0 & 0 & 0 & 0 & 0 & 0 \\
0 & 0 & 0 & 0 & 0 & 0 & 0 & 0 & 0 & -3 & 3 & 3 & 3 \\
0 & 0 & 0 & 0 & 2 & 2 & 2 & 4 & -2 & -1 & -1 & -1 & 1 \\
0 & 6 & 0 & 0 & 0 & 0 & 0 & 0 & 0 & 0 & 0 & 0 & 0 \\
0 & 0 & 0 & 0 & 0 & 0 & 0 & 0 & 0 & 3 & -3 & 3 & 3 \\
0 & 0 & 0 & 0 & 4 & -2 & -2 & 2 & 2 & 1 & 1 & 1 & -1 \\
0 & 0 & 0 & 0 & 2 & 2 & 2 & -2 & -2 & 2 & 2 & 2 & -2 \\
0 & 0 & 0 & 0 & 0 & 0 & 0 & 0 & 0 & 3 & 3 & -3 & 3 \\
0 & 0 & 6 & 0 & 0 & 0 & 0 & 0 & 0 & 0 & 0 & 0 & 0 \\
0 & 0 & 0 & 6 & 0 & 0 & 0 & 0 & 0 & 0 & 0 & 0 & 0 \\
0 & 0 & 0 & 0 & 2 & 2 & 2 & -2 & 4 & -1 & -1 & -1 & 1 \\
0 & 0 & 0 & 0 & -2 & 4 & -2 & 2 & 2 & 1 & 1 & 1 & -1 \\
0 & 0 & 0 & 0 & -2 & -2 & 4 & 2 & 2 & 1 & 1 & 1 & -1 \\
\end{array}
\right)}
\end{equation}
and
\begin{equation}\small{
Q_2=\frac{1}{3}\left(
\begin{array}{ccccccccccccc}
3 & 0 & 0 & 0 & 0 & 0 & 0 & 0 & 0 & 0 & 0 & 0 & 0 \\
0 & 3 & 0 & 0 & 0 & 0 & 0 & 0 & 0 & 0 & 0 & 0 & 0 \\
0 & 0 & 0 & 0 & 0 & 0 & 0 & 1 & 1 & 1 & 2 & -1 & -1 \\
0 & 0 & 3 & 0 & 0 & 0 & 0 & 0 & 0 & 0 & 0 & 0 & 0 \\
0 & 0 & 0 & 3 & 0 & 0 & 0 & 0 & 0 & 0 & 0 & 0 & 0 \\
0 & 0 & 0 & 0 & 0 & 0 & 0 & 2 & -1 & -1 & 1 & 1 & 1 \\
0 & 0 & 0 & 0 & 0 & 0 & 0 & 1 & 1 & 1 & -1 & 2 & -1 \\
0 & 0 & 0 & 0 & 3 & 0 & 0 & 0 & 0 & 0 & 0 & 0 & 0 \\
0 & 0 & 0 & 0 & 0 & 3 & 0 & 0 & 0 & 0 & 0 & 0 & 0 \\
0 & 0 & 0 & 0 & 0 & 0 & 3 & 0 & 0 & 0 & 0 & 0 & 0 \\
0 & 0 & 0 & 0 & 0 & 0 & 0 & 1 & 1 & 1 & -1 & -1 & 2 \\
0 & 0 & 0 & 0 & 0 & 0 & 0 & -1 & 2 & -1 & 1 & 1 & 1 \\
0 & 0 & 0 & 0 & 0 & 0 & 0 & -1 & -1 & 2 & 1 & 1 & 1 \\
\end{array}
\right)}
\end{equation}
The adjacency matrices of the corresponding generalized cospectral mates are
\begin{equation}\small{
	Q_1^\T AQ_1=\left(
	\begin{array}{ccccccccccccc}
	0 & 0 & 0 & 0 & 1 & 0 & 0 & 0 & 1 & 0 & 0 & 1 & 1 \\
	0 & 0 & 0 & 0 & 1 & 1 & 1 & 0 & 0 & 0 & 0 & 1 & 1 \\
	0 & 0 & 0 & 0 & 1 & 0 & 0 & 1 & 0 & 0 & 0 & 0 & 0 \\
	0 & 0 & 0 & 0 & 1 & 0 & 0 & 0 & 1 & 0 & 1 & 0 & 1 \\
	1 & 1 & 1 & 1 & 0 & 0 & 1 & 0 & 0 & 1 & 0 & 0 & 1 \\
	0 & 1 & 0 & 0 & 0 & 0 & 1 & 0 & 0 & 0 & 0 & 1 & 1 \\
	0 & 1 & 0 & 0 & 1 & 1 & 0 & 0 & 1 & 0 & 0 & 1 & 1 \\
	0 & 0 & 1 & 0 & 0 & 0 & 0 & 0 & 1 & 0 & 1 & 0 & 1 \\
	1 & 0 & 0 & 1 & 0 & 0 & 1 & 1 & 0 & 0 & 1 & 1 & 0 \\
	0 & 0 & 0 & 0 & 1 & 0 & 0 & 0 & 0 & 0 & 0 & 0 & 0 \\
	0 & 0 & 0 & 1 & 0 & 0 & 0 & 1 & 1 & 0 & 0 & 0 & 0 \\
	1 & 1 & 0 & 0 & 0 & 1 & 1 & 0 & 1 & 0 & 0 & 0 & 0 \\
	1 & 1 & 0 & 1 & 1 & 1 & 1 & 1 & 0 & 0 & 0 & 0 & 0 \\
	\end{array}
	\right).}
\end{equation}
and
\begin{equation}\small{Q_2^\T AQ_2=
\left(
\begin{array}{ccccccccccccc}
0 & 1 & 0 & 1 & 0 & 0 & 0 & 1 & 0 & 0 & 0 & 0 & 1 \\
1 & 0 & 1 & 0 & 0 & 0 & 1 & 0 & 1 & 1 & 1 & 0 & 1 \\
0 & 1 & 0 & 1 & 0 & 0 & 0 & 1 & 1 & 1 & 0 & 0 & 0 \\
1 & 0 & 1 & 0 & 0 & 0 & 0 & 1 & 1 & 1 & 0 & 0 & 0 \\
0 & 0 & 0 & 0 & 0 & 0 & 1 & 1 & 0 & 0 & 1 & 0 & 0 \\
0 & 0 & 0 & 0 & 0 & 0 & 0 & 1 & 0 & 0 & 1 & 0 & 0 \\
0 & 1 & 0 & 0 & 1 & 0 & 0 & 1 & 0 & 0 & 0 & 0 & 1 \\
1 & 0 & 1 & 1 & 1 & 1 & 1 & 0 & 0 & 1 & 0 & 0 & 0 \\
0 & 1 & 1 & 1 & 0 & 0 & 0 & 0 & 0 & 1 & 0 & 0 & 0 \\
0 & 1 & 1 & 1 & 0 & 0 & 0 & 1 & 1 & 0 & 0 & 0 & 1 \\
0 & 1 & 0 & 0 & 1 & 1 & 0 & 0 & 0 & 0 & 0 & 0 & 1 \\
0 & 0 & 0 & 0 & 0 & 0 & 0 & 0 & 0 & 0 & 0 & 0 & 1 \\
1 & 1 & 0 & 0 & 0 & 0 & 1 & 0 & 0 & 1 & 1 & 1 & 0 \\
\end{array}
\right).}
\end{equation}
\subsection{Extending to almost controllable graphs}	

Now we assume the given graph $G$ is almost controllable. Lemma  \ref{nontrsol} can be easily extended to almost controllable graphs by a natural modification of Eq.~\eqref{sys1}: replacing $W^{(p_i)}(G)$ by $W^{(p_i)}_0(G)$. Accordingly, we define $\Omega(G)$ similarly as in Definition \ref{OG}. To obtain a correct version of Theorem \ref{nsc2} for almost controllable graphs, we need to distinguish two classes according to their automorphism groups.

A graph is \emph{asymmetric} if its automorphism group is trivial; and is  \emph{symmetric} otherwise.
\begin{lemma}\label{sva}
	Let $G$ and $H$ be two almost controllable graphs having the same generalized spectrum.  Let $Q_1$ and $Q_2$ be the two rational regular orthogonal matrices satisfying $Q^\T A(G)Q=A(H)$. Then $Q_1$ and $Q_2$ are column permutation equivalent if and only if $H$ is symmetric.
\end{lemma}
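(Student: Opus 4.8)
The plan is to package the statement into a single fact about the transition matrix $R:=Q_1^\T Q_2$. Two preliminary observations drive everything. First, a short computation eliminating $A(G)$ from the two identities $Q_1^\T A(G)Q_1=A(H)=Q_2^\T A(G)Q_2$ yields $R^\T A(H)R=A(H)$, with $R\in\RO_n$ since it is a product of regular orthogonal matrices. Second, $Q_1$ and $Q_2$ are column permutation equivalent if and only if $R$ is a permutation matrix, because $Q_2=Q_1P\iff R=P$; and by Theorem \ref{connection}(\rmnum{2}) the two solutions $Q_1,Q_2$ are distinct, so $R\neq I$. Hence it suffices to prove: $R$ is a permutation matrix $\iff$ $H$ is symmetric.

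For the implication ``$H$ symmetric $\Rightarrow$ $R$ a permutation matrix'', I would take a nontrivial automorphism of $H$, i.e.\ a permutation matrix $P_0\neq I$ with $P_0^\T A(H)P_0=A(H)$, and form the matrix $Q_1P_0$. A direct check shows $Q_1P_0\in\RO_n$ (here one uses $P_0e=e$), that it is rational, and that $(Q_1P_0)^\T A(G)(Q_1P_0)=P_0^\T A(H)P_0=A(H)$; moreover $Q_1P_0\neq Q_1$ because $P_0\neq I$ and $Q_1$ is invertible. Since Theorem \ref{connection}(\rmnum{2}) asserts that the \emph{only} rational regular orthogonal matrices conjugating $A(G)$ to $A(H)$ are $Q_1$ and $Q_2$, we are forced to conclude $Q_1P_0=Q_2$, so $R=Q_1^\T Q_2=P_0$ is a permutation matrix, and thus $Q_1,Q_2$ are column permutation equivalent.

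For the converse, suppose $R$ is a permutation matrix. Then $R^\T A(H)R=A(H)$ exhibits $R$ as an automorphism of $H$, and $R\neq I$ makes it a nontrivial one, so $H$ is symmetric; contrapositively, if $H$ is asymmetric then $R$ cannot be a permutation matrix and $Q_1,Q_2$ are not column permutation equivalent.

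I do not anticipate a serious obstacle: the content is a short piece of linear algebra once the problem is phrased through $R$. The one step that genuinely uses the theory rather than formal manipulation is the symmetric direction, where one must produce the explicit second solution $Q_1P_0$ and then invoke the ``exactly two solutions'' clause of Theorem \ref{connection}(\rmnum{2}) to pin it down as $Q_2$; verifying the elementary properties of $Q_1P_0$ (orthogonality, regularity, rationality, and being distinct from $Q_1$) is routine.
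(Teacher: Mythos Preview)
Your proposal is correct and follows essentially the same approach as the paper: both directions hinge on the observation that if $Q_2=Q_1P$ then $P^\T A(H)P=A(H)$, and conversely that any nontrivial automorphism $P_0$ of $H$ yields a second rational regular orthogonal solution $Q_1P_0$, which must equal $Q_2$ by Theorem~\ref{connection}(\rmnum{2}). Your introduction of $R=Q_1^\T Q_2$ is a convenient organizing device but does not change the underlying argument.
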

\begin{proof}
	Suppose that $Q_1$ and $Q_2$ are column permutation equivalent. Then there exists a permutation matrix $P$ such that $Q_2=Q_1P$. Since $Q_2\neq Q_1$, we see that $P$ is not the identity matrix. Since both $Q_1$ and $Q_2$ satisfy the equation $Q^\T A(G)Q=A(H)$, we have $$A(H)=Q_2^\T A(G)Q_2=(Q_1P)^\T A(G)(Q_1P)=P^\T(Q_1^\T A(G)Q_1)P=P^\T A(H)P.$$
	This means that $H$ is symmetric.
	
	Conversely, if there exists a permutation matrix $P$ other than the identity matrix such that $P^\T A(H)P=A(H)$, then it is easy to see that $\hat{Q}=Q_1P$ also satisfies the equation  $Q^\T A(G)Q=A(H)$. Clearly, $\hat{Q}$ is a rational regular orthogonal matrix and is different from $Q_1$. Thus, $\hat{Q}=Q_2$, that is, $Q_2$ and $Q_1$ are column permutation equivalent. This completes the proof.
\end{proof}
Let $K=\{\xi_1,\ldots,\xi_n\}$ be a maximum clique of $\Omega(G)$. We define $\phi(K)$ to be the isomorphic class of the graph with adjacency matrix $Q^\T A(G)Q$, where $Q=\frac{1}{L}[\xi_1,\ldots,\xi_n]$. We regard $\phi(K)$ as an unlabeled graph and then $\phi$ gives a map from all maximum cliques to $\mathcal{C}(G)$, the set of all unlabeled graphs generalized cospectral with $G$.

Using Lemma \ref{sva}, we can obtain the following analogy of Theorem \ref{nsc2}.

\begin{theorem}
	Let $G$ be almost controllable. Then the map $\phi$ is surjective. Moreover, any symmetric graph in $\mathcal{C}(G)$ has exactly one preimage whereas any asymmetric graph in  $\mathcal{C}(G)$ has exactly two preimages.
\end{theorem}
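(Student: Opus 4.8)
The plan is to mirror the controllable case (Theorem \ref{nsc2}), but keep careful track of the fact that, by Theorem \ref{connection}(\rmnum{2}), each graph $H$ generalized cospectral with the almost controllable graph $G$ is produced by \emph{exactly two} rational regular orthogonal matrices, call them $Q_1^{(H)}$ and $Q_2^{(H)}$. First I would check that $\phi$ is well defined: given a maximum clique $K=\{\xi_1,\dots,\xi_n\}$ of $\Omega(G)$, the matrix $Q=\frac1L[\xi_1,\dots,\xi_n]$ is a rational regular orthogonal matrix and $Q^\T A(G)Q$ is a symmetric $0$--$1$ matrix with zero diagonal (this is exactly the content of the edge relation in Definition \ref{OG} together with the constraints $\xi_i^\T\xi_i=L^2$, $e^\T\xi_i=L$), so it is the adjacency matrix of a genuine graph; passing to the isomorphism class makes $\phi(K)$ depend only on $K$.

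Next I would prove surjectivity. Let $H\in\mathcal{C}(G)$. Then $H$ and $G$ are generalized cospectral, so by Theorem \ref{connection} there is a regular orthogonal $Q$ with $Q^\T A(G)Q=A(H)$, and by part (\rmnum{2}) this $Q$ is rational. The same computation as in the proof of Lemma \ref{nontrsol} (using the almost-controllable modified walk matrices $W_0^{(p_i)}$ in place of $W^{(p_i)}$, and Proposition \ref{lG2} to bound the level) shows that every column $\xi_i$ of $LQ$ is an integral solution of the almost-controllable analogue of Sys.~\eqref{sys1}, and that these $n$ columns are pairwise orthogonal with $\xi_i^\T A(G)\xi_j\in\{0,L^2\}$ for $i\ne j$. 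Hence $\{\xi_1,\dots,\xi_n\}$ is a clique of order $n$ in $\Omega(G)$, and since $\Omega(G)$ has no clique of order $n+1$ (the columns of an $n\times n$ orthogonal matrix already span $\mathbb{R}^n$), it is a maximum clique $K$ with $\phi(K)=H$. So $\phi$ is surjective.

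For the multiplicity statement, I would argue that the fiber $\phi^{-1}(H)$ is in bijection with the column-permutation-equivalence classes among the matrices that realize $H$. On the one hand, two maximum cliques $K$, $K'$ give the same $Q$ up to column permutation precisely when they are equal as sets, so distinct maximum cliques yield distinct (as matrices, up to column permutation) realizers; conversely, by Theorem \ref{connection}(\rmnum{2}) the only realizers of $H$ are $Q_1^{(H)}$ and $Q_2^{(H)}$, and each arises from the maximum clique given by its columns. Therefore $|\phi^{-1}(H)|$ equals the number of column-permutation-equivalence classes of $\{Q_1^{(H)},Q_2^{(H)}\}$, which is $1$ if $Q_1^{(H)}$ and $Q_2^{(H)}$ are column permutation equivalent and $2$ otherwise. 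Lemma \ref{sva} identifies these cases exactly: they are column permutation equivalent if and only if $H$ is symmetric. Combining, a symmetric $H$ has one preimage and an asymmetric $H$ has two, completing the proof.

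The step I expect to require the most care is the well-definedness/injectivity bookkeeping in the last paragraph: I must be sure that the \emph{two} clique sets $\{\text{columns of }Q_1^{(H)}\}$ and $\{\text{columns of }Q_2^{(H)}\}$ are really the \emph{only} maximum cliques mapping to $H$ (i.e.\ that $\phi$ does not collapse in some unforeseen way), and that "column permutation equivalent" for these two matrices is correctly matched with "equal as clique sets" — here one uses that the columns of a regular orthogonal matrix are distinct vectors, so a clique determines its matrix up to column order and conversely. Everything else is a routine transcription of the controllable argument with $W^{(p_i)}$ replaced by $W_0^{(p_i)}$.
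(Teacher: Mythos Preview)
Your proposal is correct and follows exactly the route the paper indicates: the paper does not write out a proof but simply states that the theorem follows from Lemma~\ref{sva} together with the analogy to Theorem~\ref{nsc2}, and your argument is a faithful unpacking of that sentence --- surjectivity via Theorem~\ref{connection}(\rmnum{2}) and Proposition~\ref{lG2}, and the fiber count by identifying maximum cliques with column-permutation classes of the two realizers and invoking Lemma~\ref{sva}. The only point worth tightening (which you already flag) is the passage from ``$\phi(K)$ is the isomorphism class of $H$'' to ``the associated $Q$ equals $Q_1^{(H)}$ or $Q_2^{(H)}$ up to a column permutation'': one should note that if $Q^\T A(G)Q=P^\T A(H)P$ for a permutation $P$, then $QP^{-1}\in\{Q_1^{(H)},Q_2^{(H)}\}$, so the column set of $LQ$ coincides with that of $LQ_1^{(H)}$ or $LQ_2^{(H)}$, giving at most two preimages as claimed.
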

\begin{corollary}\label{dgsacon}
	Let $G$ be a symmetric (resp. asymmetric) almost controllable graph. Then $G$ has a generalized cospectral mate if and only if $\Omega(G)$ has at least two (resp. three) cliques of order $n$.
\end{corollary}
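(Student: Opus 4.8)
The plan is to deduce the corollary directly from the preceding theorem describing the map $\phi$, by counting the cliques of order $n$ in $\Omega(G)$ according to the sizes of the fibres of $\phi$.

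First I would record two elementary facts about $\Omega(G)$. Since the vertices of any clique of $\Omega(G)$ are pairwise orthogonal nonzero vectors in $\mathbb{R}^n$ (each solution $x$ satisfies $x^\T x=L^2\neq 0$), no clique has more than $n$ vertices. On the other hand, the $n$ trivial solutions $Le^{(1)},\ldots,Le^{(n)}$ are always vertices of $\Omega(G)$: each satisfies the relevant congruences because $p_i^{t_i}\mid L$, and $(Le^{(k)})^\T(Le^{(k)})=L^2$, $e^\T(Le^{(k)})=L$, $(Le^{(k)})^\T A(G)(Le^{(k)})=0$; moreover they are pairwise adjacent. Hence the cliques of order $n$ of $\Omega(G)$ are precisely its maximum cliques, and there is always at least one. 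Consequently, writing $N$ for the number of cliques of order $n$ in $\Omega(G)$, this $N$ equals the number of maximum cliques, which is exactly the size of the domain of $\phi$.

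Next I would invoke the preceding theorem. Because $\phi$ is surjective onto $\mathcal{C}(G)$, with every symmetric member of $\mathcal{C}(G)$ having a single preimage and every asymmetric member having exactly two, we obtain
$$N=\#\{\text{symmetric graphs in }\mathcal{C}(G)\}+2\,\#\{\text{asymmetric graphs in }\mathcal{C}(G)\}.$$
Now $G\in\mathcal{C}(G)$, and $G$ has a generalized cospectral mate if and only if $\mathcal{C}(G)$ contains a graph other than $G$; any such additional graph contributes at least $1$ to the right-hand side above.

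Finally I would split into the two cases of the statement. If $G$ is symmetric, then $G$ itself contributes exactly $1$ to $N$, so $N\ge 2$ holds if and only if $\mathcal{C}(G)$ has a member besides $G$, i.e. if and only if $G$ has a generalized cospectral mate. If $G$ is asymmetric, then $G$ already contributes $2$ to $N$, so $N\ge 3$ holds if and only if $\mathcal{C}(G)$ has a member besides $G$ (each such member adding at least $1$), i.e. if and only if $G$ has a generalized cospectral mate. I do not expect any real obstacle here: given the preceding theorem the argument is pure bookkeeping, and the only subtlety is remembering that for an asymmetric almost controllable $G$ the graph $G$ by itself already accounts for two maximum cliques of $\Omega(G)$ — a feature peculiar to the almost controllable case, stemming from Theorem \ref{connection}(\rmnum{2}) together with Lemma \ref{sva} — which is why the threshold in the asymmetric case is $3$ rather than $2$.
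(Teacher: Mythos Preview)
Your proposal is correct and follows exactly the intended route: the paper states Corollary~\ref{dgsacon} without proof as an immediate consequence of the preceding theorem on the map $\phi$, and your fibre-counting argument is precisely the natural way to make that deduction explicit. The only content beyond bookkeeping is the observation that cliques of order $n$ in $\Omega(G)$ coincide with maximum cliques, which you justify correctly.
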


\begin{example}\normalfont{
Let $G$ be the graph with $n=9$ vertices whose adjacency matrix is
\begin{equation}A=\small{\left(
	\begin{array}{ccccccccc}
	0 & 1 & 1 & 1 & 1 & 0 & 1 & 1 & 1 \\
	1 & 0 & 1 & 0 & 1 & 1 & 0 & 0 & 0 \\
	1 & 1 & 0 & 1 & 0 & 1 & 1 & 0 & 0 \\
	1 & 0 & 1 & 0 & 1 & 0 & 0 & 1 & 0 \\
	1 & 1 & 0 & 1 & 0 & 1 & 0 & 1 & 0 \\
	0 & 1 & 1 & 0 & 1 & 0 & 1 & 0 & 0 \\
	1 & 0 & 1 & 0 & 0 & 1 & 0 & 1 & 0 \\
	1 & 0 & 0 & 1 & 1 & 0 & 1 & 0 & 1 \\
	1 & 0 & 0 & 0 & 0 & 0 & 0 & 1 & 0 \\
	\end{array}
	\right).}
\end{equation}
The graph $G$ is almost controllable and asymmetric. By a similar procedure as we do for Example \ref{g13} in the last subsection, we obtain $L=128$ and graph $\Omega(G)$. Table \ref{vexem2} records the vertex set of $\Omega(G)$.
\begin{table}
	\centering
	 \caption{Vertex set of $\Omega(G)$.}
	 \label{vexem2}
	 	\small{
\begin{tabular}{cc|cc|cc}
	\toprule
	$i$ & $\frac{1}{16}v_i$ & $i$ & $\frac{1}{16}v_i$ & $i$ & $\frac{1}{16}v_i$\\
	\midrule
1&$({8,0,0,0,0,0,0,0,0})$&14&$({6,-2,-2,0,2,2,2,2,-2})$&27&$({0,4,4,0,0,0,0,-4,4})$\\
2&$({0,8,0,0,0,0,0,0,0})$&15&$({-2,6,-2,0,2,2,2,2,-2})$&28&$({0,4,4,0,0,0,0,4,-4})$\\
3&$({0,0,8,0,0,0,0,0,0})$&16&$({-2,-2,6,0,2,2,2,2,-2})$&29&$({-2,-2,-2,0,2,6,2,2,2})$\\
4&$({0,0,0,8,0,0,0,0,0})$&17&$({-2,-2,-2,0,2,2,2,2,6})$&30&$({2,2,2,0,-2,2,-2,-2,6})$\\
5&$({0,0,0,0,8,0,0,0,0})$&18&$({4,-2,-2,4,4,0,0,-2,2})$&31&$({-2,4,4,4,2,-2,-2,0,0})$ \\
6&$({0,0,0,0,0,8,0,0,0})$&19&$({0,2,2,4,0,-4,4,2,-2})$&32&$({2,0,0,4,-2,2,2,4,-4})$\\
7&$({0,0,0,0,0,0,8,0,0})$&20&$({0,2,2,4,0,4,-4,2,-2})$&33&$({-4,2,2,4,4,0,0,2,-2})$\\
8&$({0,0,0,0,0,0,0,8,0})$&21&$({-4,0,0,0,4,4,4,0,0})$&34&$({4,2,2,-4,4,0,0,2,-2})$\\
9&$({0,0,0,0,0,0,0,0,8})$&22&$({4,0,0,0,-4,4,4,0,0})$&35&$({4,2,2,4,-4,0,0,2,-2})$\\
10&$({2,2,2,0,6,-2,-2,-2,2})$&23&$({4,0,0,0,4,-4,4,0,0})$&36&$({-1,-1,-1,4,1,3,-3,5,1})$\\
11&$({2,2,2,0,-2,6,-2,-2,2})$&24&$({4,0,0,0,4,4,-4,0,0})$&37&$({1,1,1,4,-1,1,-5,3,3})$\\
	12 & $({2, 2, 2, 0, -2, -2, 6, -2, 2})$&25&$({0, -4, 4, 0, 0, 0, 0, 4, 4})$& & \\
	13 & $({2, 2, 2, 0, -2, -2, -2, 6, 2})$& 26 &$( {0, 4, -4, 0, 0, 0, 0, 4, 4} )$  &  & \\
					
	\bottomrule
\end{tabular}
}
\end{table}

The graph $\Omega(G)$ has exactly 8 maximum cliques. Finally, using these cliques, we completely determine the set $\mathcal{C}(G)$. The exact map from maximum cliques to $\mathcal{C}(G)$ is illustrated in Table \ref{corr}.
\begin{table}[h]
	\centering
		\caption{Correspondence between maximum cliques and $\mathcal{C}(G)$}\label{corr}
	\begin{tabular}{ c  l  }
		\toprule
		$\mathcal{C}(G)$ & Maximum cliques in $\Omega(G)$ \\
		\midrule
		\begin{minipage}[b]{0.15\columnwidth}
			\centering
			\raisebox{-.5\height}{\includegraphics[width=\linewidth]{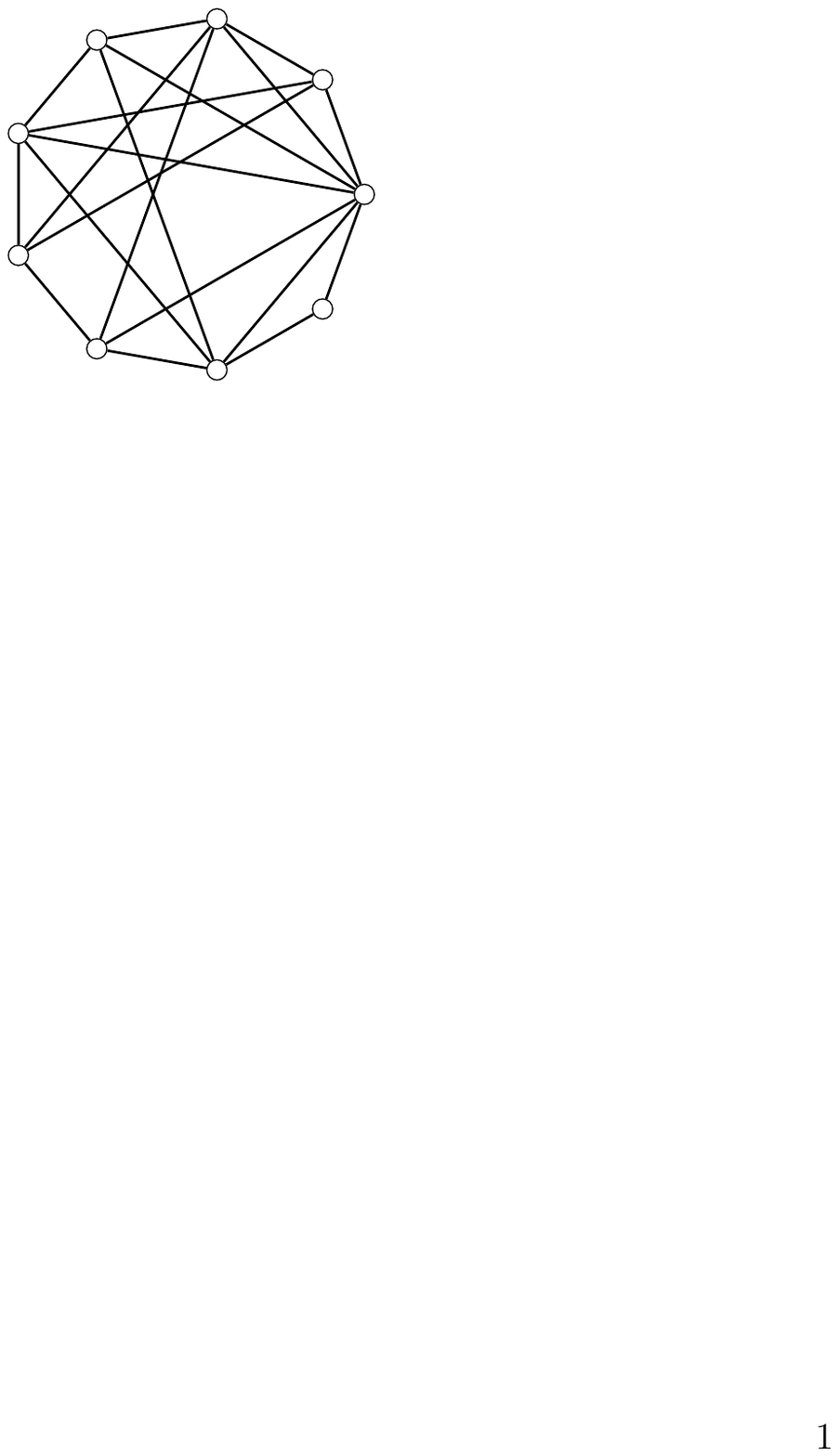}}
		\end{minipage}
		& $\{v_1,v_2,v_3,v_4,v_5,v_6,v_7,v_8,v_9\}$, $\{v_4, v_{10},v_{11}, v_{12}, v_{13}, v_{14}, v_{15}, v_{16}, v_{17}\}$\\
			\begin{minipage}[b]{0.15\columnwidth}
			\centering
			\raisebox{-.5\height}{\includegraphics[width=\linewidth]{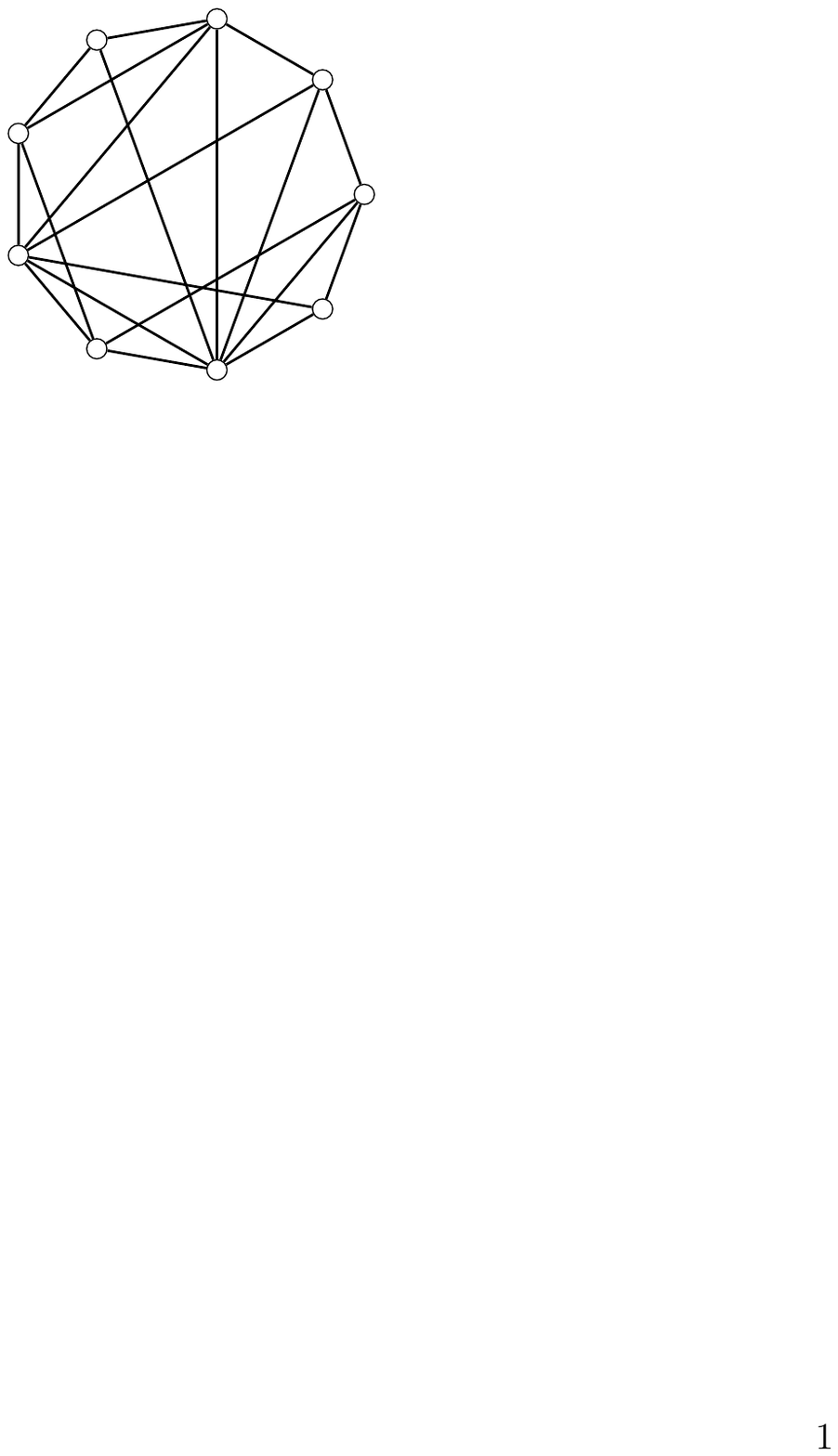}}
		\end{minipage}
		& $\{v_6, v_7, v_{18}, v_{25},v_{26}, v_{27}, v_{33}, v_{34}, v_{35}\}$, $\{v_{11}, v_{12}, v_{18}, v_{21}, v_{25}, v_{26}, v_{31}, v_{32}, v_{34}\}$\\
			\begin{minipage}[b]{0.15\columnwidth}
			\centering
			\raisebox{-.5\height}{\includegraphics[width=\linewidth]{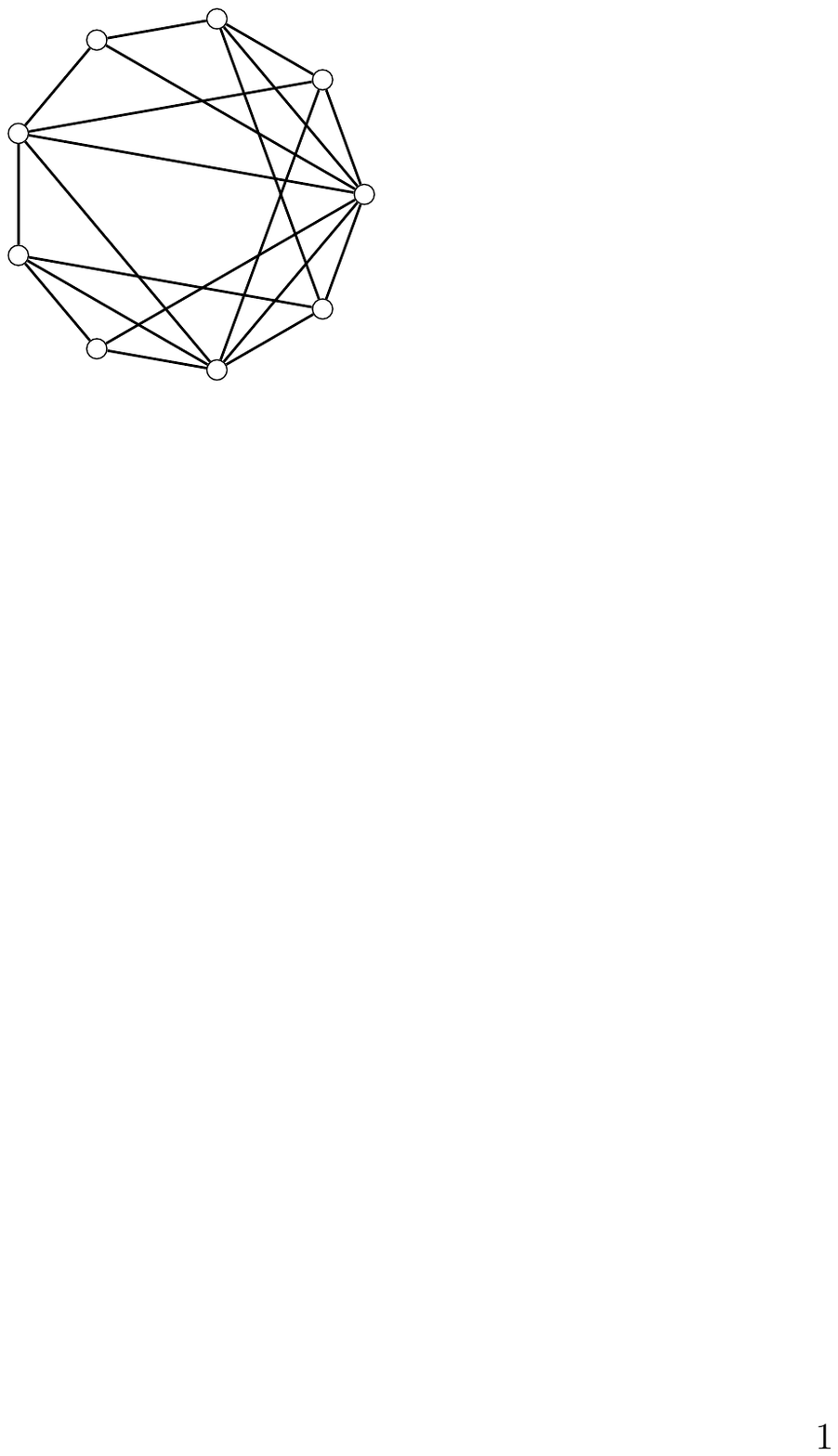}}
		\end{minipage}
	& $\{v_1, v_4, v_5, v_6, v_7, v_{25}, v_{26}, v_{27}, v_{28}\}$, $\{v_4, v_{10}, v_{11}, v_{12}, v_{14}, v_{21}, v_{25}, v_{26}, v_{28}\}$\\
		\begin{minipage}[b]{0.15\columnwidth}
		\centering
		\raisebox{-.5\height}{\includegraphics[width=\linewidth]{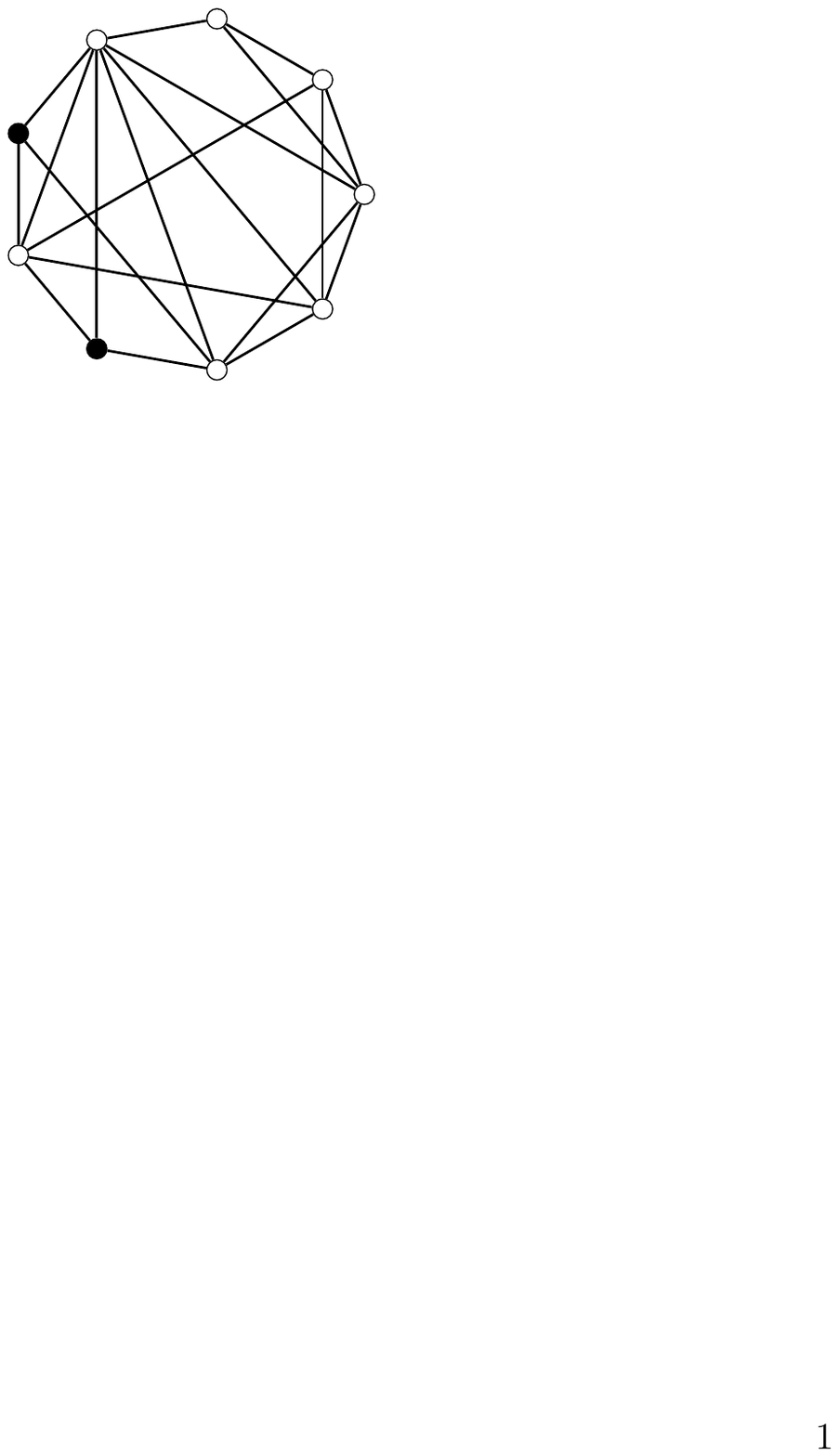}}
	\end{minipage}
	&$\{v_{18}, v_{19}, v_{20}, v_{21}, v_{22}, v_{25}, v_{26}, v_{27}, v_{34}\}$\\
		\begin{minipage}[b]{0.15\columnwidth}
		\centering
		\raisebox{-.5\height}{\includegraphics[width=\linewidth]{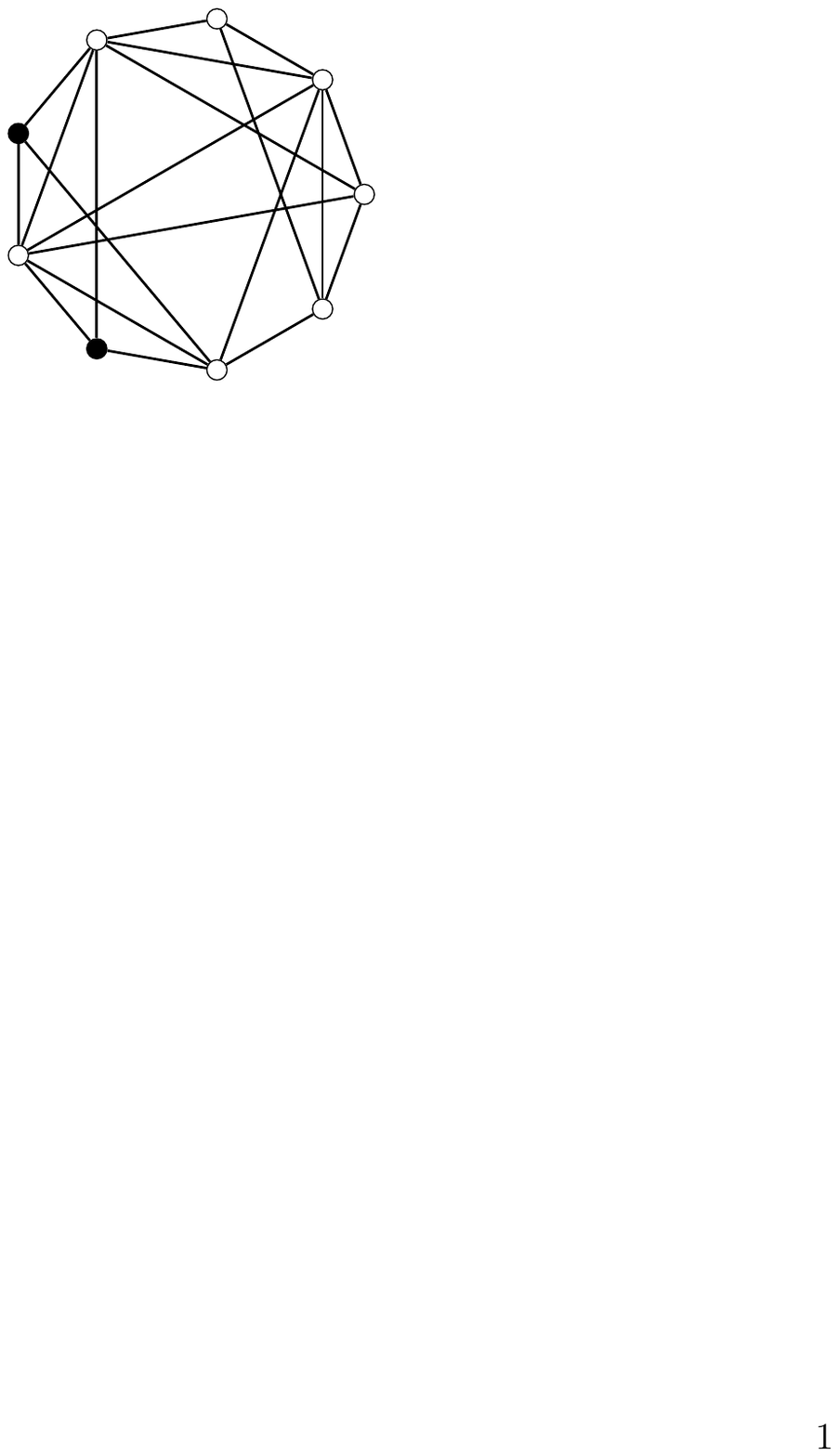}}
	\end{minipage}
	&$\{v_{4}, v_{21}, v_{22}, v_{23}, v_{24}, v_{25}, v_{26}, v_{27}, v_{28}\}$\\
		 \bottomrule
	\end{tabular}
\begin{tablenotes}
	\footnotesize
	\item*The last two graphs are symmetric; swapping the two vertices represented by solid circles (while fixing the remaining vertices) gives the desired automorphism for either graph.
\end{tablenotes}

\end{table}
}
\end{example}
\section{Simulation on Haemers' conjecture}\label{sim}
As an application, we give some simulation results on Haemers' conjecture that almost all graphs are DGS. For each $n$, we randomly generate 10,000 graphs that are controllable or almost controllable. We use the Mathematica command RandomGraph[BernoulliGraphDistri-bution[$n$, 0.5]]  to generate graphs and choose those controllable or almost controllable ones. For each generated graph, we use Corollary \ref{dgscon} or Corollary \ref{dgsacon} to check whether it is DGS. For convenience, we summarize the main procedure in Algorithm \ref{A}, where we only consider controllable graphs for simplicity.

In our real implementation, we also dismiss some graphs for which the computations of $\Omega(G)$ are complex. We set $2^{16}$ as the upper bound  in our program. In Statement 10 of Algorithm \ref{A}, we compute the complexity for the first two steps. The complexity of Step 1 is  measured by the number of solutions of the equation $(W^{(p_i)}(G))^\T \xi_i\equiv 0\pmod {p_i^{t_i}}$, which can be obtained by Corollary \ref{numsl}. The complexity of Step 2 is measured by the cardinality of $\Pi_1\times \Pi_2\times\cdots\times  \Pi_s$, which is easy to obtain in the execution. We do not set obstacles using the upper bound in Step 3 but use Corollary \ref{necforperfect} to rule out solutions (from Step 2) that can not be $m$-perfect.
\begin{algorithm}[t]
	\caption{Determining whether a controllable graph is DGS}	\label{A}
	\hspace*{0.02in} {\bf Input:} 
	an $n$-vertex controllable graph $G$.\\
	\hspace*{0.02in} {\bf Output:} 
	DGS or non-DGS
	\begin{algorithmic}[1]
		\State Compute $d:=\gcd(\Delta(G),d_n(W(G)))$.   \;		
		\State Factor $d$ and set $S:=\{2\}\cup\{p\colon\, p$ is an odd prime factor of $d$ with $p^2\mid \Delta(G)\} $.\;
		\State Set $s:=0$.
		\For{ each $p$ in $S$} 
		\If{$\ord_{p}(d_n(W^{(p)}(G)))\ge 1$}
		\State	Update $s:=s+1$.\;
		\State Set $p_s:=p$ and $t_s:=\ord_{p}(d_n(W^{(p)}(G)))$.\;
		\EndIf
		\EndFor
		\If{$s=0$} \Return DGS.\EndIf
		\State Set $L:=\prod_{i=1}^s p_i^{t_i}$.
		\State Solve system \eqref{sys1} using the three steps described in Sec.~\ref{threesteps}.
		\State Construct the graph $\Omega(G)$ as in Definition \ref{OG}.
		\If{ $\Omega(G)$ has exactly one maximum clique} \Return DGS.\EndIf
		\State\Return non-DGS.
	\end{algorithmic}
\end{algorithm}

Table \ref{res} records the experimental result in one implementation. The Mathematica codes  can be found at https://github.com/wangweiAHPU/HaemersConjecture. For example, when $n=10$, among the 10,000 graphs which are controllable or almost controllable, 7977 graphs are guaranteed to be DGS while 1875 graphs are guaranteed to be not DGS. The remaining 148 graphs are undecided as they reach the upper bound on the complexity in the computation of $\Omega(G)$. Among the 7977 DGS graphs, 5687 graphs  have $K_n$ as  $\Omega(G)$. As $n$ increases, more than $99\%$ of the graphs are guaranteed to be DGS. Furthermore, the vast majority of these DGS graphs have $K_n$ as $\Omega(G)$ which trivially has exactly one maximum clique. This phenomenon  gives a simple explanation of why our proposed algorithm runs fast in practice.
	\begin{table}
	\centering
	\caption{Fractions of DGS graphs}	\label{res}
	
	\begin{tabular}{ccccc}
		\toprule
		$n$ & \# DGS graphs & \# graphs with $\Omega(G)=K_n$&\# non-DGS graphs & \# Undecided\\
		\midrule
		7 &9755&7869  & 245 & 0\\
		8 &9021&6817  & 970 & 9\\
		9&8305&5914&1626&69\\
		10&7977&5687&1875&148\\
		11& 7984&5696& 1812 &204 \\
		12& 8055&5802& 1702 & 243\\
		13& 8378 & 6162&1405  &217 \\
		14& 8724 & 6735 &1097  &179\\
		15&9108& 7321 &743  & 149\\
		16&9373 &7701&522  &105\\
		17& 9519&8176& 397 & 84 \\
		18	& 9655 &8573 & 264 &81\\
		19	&9762 &8920 &171  &67 \\
		20	& 9839 &9232 & 101 &60\\
		21  &9883 &9465 &53  &64\\
		22 & 9903& 9609& 36&61\\
		23 &9908&9696&23&69\\
		24 & 9928&9778&18 & 54\\
		25&9909&9831&5&86\\
		26& 9944&9886&1 & 55\\
		27&9934&9896&1&65\\
		28 &9940&9925 &1 &59 \\
		29&9942&9928&0&58\\
		30 &9952&9943 &0 &48 \\
		35 &9931 &9930& 0&69 \\
		40 &9949 &9949& 0& 51\\
		50 & 9945&9945& 0&55 \\
		\bottomrule
	\end{tabular}
\end{table}

We believe that, compared with Table \ref{computer}, the experimental results give us more confidence in Haemers' conjecture. We end this paper with the following conjecture, which is naturally inspired by our experiment.
\begin{conjecture}
	Almost all controllable graphs $G$ have complete graphs as $\Omega(G)$.
\end{conjecture}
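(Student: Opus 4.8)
\noindent\emph{A proof strategy.} By Definition~\ref{OG} and the remarks following it, $\Omega(G)=K_n$ holds exactly when the system \eqref{sys1} has no nontrivial solution, i.e.\ when there is no integer vector $x$ with $x\not\equiv 0\pmod L$, $x^\T x=L^2$, $e^\T x=L$, $x^\T A(G)x=0$ and $(W^{(p_i)}(G))^\T x\equiv 0\pmod{p_i^{t_i}}$ for all $i$. The tempting reduction ``$L=1$ for almost all $G$'' is \emph{false}: already the prime $2$ makes $\ord_2(d_n(W^{(2)}(G)))\ge 1$ for a constant fraction of graphs (heuristically, with limiting probability $1-\prod_{k\ge1}(1-2^{-k})\approx 0.71$), so a genuinely finer, two-part argument is needed. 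Since almost all graphs are controllable \cite{rourke2016}, it suffices to work with $G=G(n,1/2)$, and there one would bound $\Pr[\Omega(G)\ne K_n]\le \Pr[L>T]+\Pr[\text{a certain sparse event}]$ for a suitable constant $T$ and then let $T\to\infty$.

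\noindent\emph{Part A (the level is bounded in probability).} By Proposition~\ref{lG2}, $p\mid L$ forces $\ord_p(d_n(W^{(p)}(G)))\ge1$ and, for odd $p$, also $p^2\mid\Delta(G)$. Using Cohen--Lenstra-type statistics for cokernels of random integer matrices one expects $\Pr[\,p\mid\det W^{(p)}(G)\,]=O(1/p)$ and $\Pr[\,p^2\mid\Delta(G)\,]=O(1/p^2)$ (the latter because $p^2\mid\Delta(G)$ forces the random polynomial $\det(xI-A)$ to factor unusually degenerately mod $p$), so $\Pr[\,p\mid L\,]=O(1/p^2)$ for odd $p$, while $\ord_2(L)$ and $\ord_3(L)$ have rapidly decaying tails. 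Combining these, $\limsup_{n\to\infty}\Pr[\,L>T\,]\to 0$ as $T\to\infty$.

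\noindent\emph{Part B (no nontrivial solution when $L\le T$).} If $x$ is a nontrivial solution of \eqref{sys1} with $L\le T$, then $x^\T x=L^2$ forces the Hamming weight of $x$ to be at most $T^2$, and $x\not\equiv 0\pmod L$ gives a prime $p\le T$ with $p\mid L$ such that, after dividing $x$ by the highest power of $p$ dividing all its entries and reducing mod $p$, one obtains a \emph{nonzero} vector $\bar x\in\mathbb{F}_p^n$ of Hamming weight at most $T^2$ lying in $\ker\bigl((W^{(p)}(G))^\T\bmod p\bigr)$. Thus it is enough to show: a.a.s., for every prime $p\le T$ this kernel contains no nonzero vector of weight $\le T^2$. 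I would prove this by a first-moment bound over the $O(n^{T^2})$ candidate sparse vectors $\bar x$, using for each fixed nonzero $\bar x$ the anticoncentration estimate
\begin{equation*}
\Pr\bigl[(W^{(p)}(G))^\T\bar x\equiv 0\pmod p\bigr]\;=\;p^{-\Omega(n)}.
\end{equation*}
Indeed $(W^{(p)}(G))^\T\bar x\equiv0\pmod p$ unwinds to the $n$ congruences $\langle\bar x,A^k e\rangle\equiv0\pmod p$ (plus the divided-out $M_p(A)$-analogues), $0\le k<n$, over the random symmetric $0/1$ matrix $A$, and the claim is that these Krylov-type congruences are jointly non-degenerate, each costing a factor of order $1/p$. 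Granting it, $n^{T^2}\cdot p^{-\Omega(n)}\to 0$, which is Part B; together with Part A this gives $\limsup_n\Pr[\Omega(G)\ne K_n]\le\Pr[L>T]\to0$ as $T\to\infty$, proving the conjecture.

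\noindent\emph{The main obstacle.} The crux is the anticoncentration bound in Part B (and, secondarily, making the Cohen--Lenstra heuristics of Part A rigorous): $W^{(p)}(G)$ is not a matrix with independent entries but the Krylov matrix of a random symmetric $0/1$ matrix, and its very definition involves $A$ modulo $p^2$ (modulo $4$ when $p=2$) because of the division by $p$. Establishing the required joint non-degeneracy of $\langle\bar x,A^k e\rangle\equiv0\pmod p$, $0\le k<n$, seems to demand an adaptation of modern random-matrix singularity methods (in the spirit of Koml\'os, Tao--Vu, Maples, and Nguyen--Wood): one would have to reveal the entries of $A$ in an order under which each successive congruence stays conditionally nontrivial, so that the probabilities genuinely multiply. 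I do not see how to carry this out with current techniques, so I regard the conjecture as open, with Table~\ref{res} supplying the present evidence for it.
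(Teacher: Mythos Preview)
The paper does not prove this statement. It is stated at the very end as an open \emph{conjecture}, ``naturally inspired by our experiment'', and the only support offered is the numerical evidence of Table~\ref{res}. There is therefore no proof in the paper to compare your proposal against.

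Your proposal is, by your own admission, not a proof either: you outline a plausible two-stage approach (bounding $L$ in probability, then ruling out sparse kernel vectors via anticoncentration) and you correctly flag the two places where the argument is only heuristic---the Cohen--Lenstra-type statistics for $\Pr[p^2\mid\Delta(G)]$ and $\Pr[p\mid\det W^{(p)}(G)]$ in Part~A, and the joint anticoncentration of the Krylov congruences $\langle\bar x,A^ke\rangle\equiv 0\pmod p$ in Part~B. Your concluding sentence (``I regard the conjecture as open'') is exactly right and matches the paper's stance.

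As a research outline your decomposition is sensible, and the reduction in Part~B (from a nontrivial solution with $L\le T$ to a nonzero sparse vector in $\ker((W^{(p)}(G))^\T\bmod p)$) is carried out correctly. But neither you nor the authors claim a proof, so the appropriate summary is simply: the statement is a conjecture in the paper, supported only by experiment, and your write-up is a heuristic strategy rather than a proof.
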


\section*{Acknowledgments}
This work is supported by the	National Natural Science Foundation of China (Grant Nos. 12001006, 11971376 and 11971406) and the Scientific Research Foundation of Anhui Polytechnic University (Grant No.\,2019YQQ024).

\end{document}